\theoremstyle{definition}
\newtheorem{theorem}{Theorem}[section]
\newtheorem{algorithm}[theorem]{Algorithm}
\newtheorem{conjecture}[theorem]{Conjecture}
\newtheorem{convention}[theorem]{Convention}
\newtheorem{construction}[theorem]{Construction}
\newtheorem{corollary}[theorem]{Corollary}
\newtheorem{definition}[theorem]{Definition}
\newtheorem{example}[theorem]{Example}
\newtheorem{exercise}[theorem]{Exercise}
\newtheorem{lemma}[theorem]{Lemma}
\newtheorem{problem}[theorem]{Problem}
\newtheorem{proposition}[theorem]{Proposition}
\newtheorem{remark}[theorem]{Remark}
\newtheorem{question}[theorem]{Question}
\newtheorem{basic problem}[theorem]{Basic Problem}
\newtheorem{rigdef}[theorem]{Rigidity Definition}
\newcommand{\R}{\mathbb{R}}
\newcommand{\Z}{\mathbb{Z}}
\newcommand{\Q}{\mathbb{Q}}
\newcommand{\C}{\mathbb{C}}
\newcommand{\F}{\mathcal{F}}
\newcommand{\del}{\partial}
\renewcommand{\H}{\mathbb{H}}
\newcommand{\boldhead}[1]{%
 {\medskip \noindent \bfseries #1  }}
\DeclareMathOperator{\isom}{Isom}
\DeclareMathOperator{\Hom}{Hom}
\DeclareMathOperator{\rot}{rot}
\DeclareMathOperator{\PSL}{PSL}
\DeclareMathOperator{\PSLk}{PSL^{(k)}}
\DeclareMathOperator{\SO}{SO}
\DeclareMathOperator{\SL}{SL}
\DeclareMathOperator{\SU}{SU}
\DeclareMathOperator{\rott}{\tilde{r}ot}
\DeclareMathOperator{\Homeo}{Homeo}
\DeclareMathOperator{\Diff}{Diff}
\DeclareMathOperator{\id}{id}
\DeclareMathOperator{\euler}{e}
\DeclareMathOperator{\fix}{fix}
\DeclareMathOperator{\Out}{Out}
\title{Rigidity and flexibility of group actions on the circle}
\author{Kathryn Mann}
\date{}
\begin{document}

\maketitle

\vspace{-.8cm}
\abstract{ We survey rigidity results for groups acting on the circle in various settings, from local to global and $C^0$ to smooth.   Our primary focus is on actions of surface groups, with the aim of introducing the reader to recent developments and new tools to study groups acting by homeomorphisms.}
\thispagestyle{empty}
\tableofcontents

%---------------------------------------------------------------------------------
\section{Introduction}

Given a group $\Gamma$ and a manifold $M$, can one describe all actions of $\Gamma$ on $M$?  Put in such broad terms this question is too ambitious, but certain special cases are quite tractable.  In this paper we focus on the special case $M=S^1$, and eventually further specialize to the case where $\Gamma$ is the fundamental group of a closed surface.   As should become clear, even this very special case leads to a remarkably rich theory, with deep connections to problems in topology, geometry and dynamics.  

Perhaps the reader is already familiar with the wonderful survey paper of Ghys, titled \emph{Groups acting on the circle} \cite{Ghys Ens}.  If not, we recommend it highly both on its own and as a companion to this work.   Although this survey also treats groups acting on $S^1$, we have chosen to take a rather different approach -- while Ghys starts by describing the dynamics of a single homeomorphism, and ultimately aims to prove that higher rank lattices do not act on on the circle, here we focus from the beginning on group actions, on groups that \emph{do} act on $S^1$, and on groups that act in many different ways.  Our aim is to explore and understand the rigidity and flexibility of these actions.    

\boldhead{Outline and scope.} We begin by motivating the study of \emph{spaces of group actions}, as well as our focus on surface groups.   Section \ref{intro rig sec} introduces the concept of rigidity in various forms, and Section \ref{themes sec} presents some well-known examples.  The remainder of the paper is devoted to advertising ``rotation number coordinates" as a means of studying spaces of group actions on $S^1$, culminating in a description of recent results and new approaches to old problems, from \cite{CW}, \cite{Invent}, and others.  

It is our hope that this paper will be both valuable and accessible to a wide audience.  Whether you are a graduate student looking for an introduction to group actions, an expert with an interest in character varieties and geometric structures, or instead have a background in hyperbolic dynamics, there should be something here for you.   

Regretfully, there are many topics we have been forced to omit or gloss over in the interest of brevity and accessibility.  These include regularity of actions, circular orders on groups, and bounded cohomology -- in fact, we have made the perhaps unconventional choice to keep discussions of group cohomology separate and in the background.   Of course, in all cases we have given references wherever possible, and hope this paper serves as a welcoming entry point for the interested reader.

\boldhead{Acknowledgements.}  Thanks to all who have shared their perspectives on (surface) groups acting on the circle, especially Christian Bonatti, Danny Calegari, Benson Farb, \'Etienne Ghys, Shigenori Matsumoto, Andr\'es Navas, Maxime Wolff, and the members of the Spring 2015 MSRI program \textit{Dynamics on Moduli Spaces of Geometric Structures}.  Thanks additionally to Subhadip Chowdhury, H\'el\`ene Eynard-Bontemps, Bena Tshishiku, and Alden Walker for comments on the manuscript, and to
the organizers and participants of \textit{Beyond Uniform Hyperbolicity 2015} where a portion of this work was presented as a mini-course.

%---------------------------------------------------------------------------------
\section{The ubiquity of surface groups}  \label{ubiq sec}

We begin by introducing a few of the many ways that actions of finitely generated groups come up in areas of topology, geometry and dynamics; with an emphasis on surface groups acting on $S^1$.  We'll see actions of surface groups on $S^1$ arise as the most basic case of \emph{flat} or \emph{foliated bundles}, as the essential examples of \emph{geometric actions on the circle}, and through analogy with the study of \emph{character varieties}.  

In addition to 
illustrating the diversity of questions 
related to surface groups acting on the circle, this section also covers basic background material used later in the text.

\begin{convention} ``Surface group" means the fundamental group of a closed, orientable surface of genus $g \geq 2$.
\end{convention}

\begin{convention} For convenience, we assume throughout this work that everything is oriented:  all manifolds are orientable, bundles are oriented, foliations are co-oriented, and homeomorphisms preserve orientation.  We use $\Diff^r(M)$ to denote the group of $C^r$ \emph{orientation-preserving} diffeomorphisms of a manifold $M$, and $\Homeo(M) = \Diff^0(M)$ the group of orientation-preserving homeomorphisms.  
\end{convention}

%------------
\subsection{Flat bundles and foliations}  \label{bundle subsec}

Let $M$ and $F$ be manifolds of dimension $m$ and $n$ respectively, and $E$ a smooth $F$--bundle over $M$.  The bundle $E$ is called \emph{flat} if any of the following equivalent conditions hold: 
\begin{enumerate}[i)]
\item $E$ admits a connection form with vanishing curvature. 
\item $E$ admits a smooth foliation of codimension $n$ transverse to the fibers.
\item $E$ admits a trivialization with locally constant transition maps (i.e. totally disconnected structure group)
\end{enumerate}

\noindent The foliation of condition ii) is given by the integral submanifolds of the (completely integrable) horizontal distribution given by the connection form, and is defined locally by the leaves $U \times \{p\}$ of the trivialization $U \times F \to U \subset M$ from condition iii).  

Viewing the foliation or trivialization as the central object, rather than the connection form, allows one to extend the notion of ``flat" to bundles of lower regularity.  

\begin{definition}
A (topological) \emph{flat $F$ bundle over $M$} is a topological $F$--bundle with a foliation of codimension $n$ transverse to the fibers.   
\end{definition}

Note that this definition requires only $C^0$ regularity -- to be transverse to the fibers is a statement about the existence of local charts to $\R^m \times \R^n$, with fibers mapped into to sets of the form $\R^n \times \{y\}$ and leaves of the foliation to $\{x\} \times \R^m$. 
   In this case, we define the \emph{regularity} of the flat bundle to be the regularity of the foliation.   

\boldhead{Holonomy and group actions.}  The relationship between flat bundles and groups acting on manifolds is given by the holonomy representation, which we describe now.  
Fix a basepoint $x \in M$, and an identification of $F$ with the fiber over $x$.  
The \emph{holonomy} of a flat bundle of class $C^r$ is a homomorphism 
$$\rho: \pi_1(M, x) \to \Diff^r(F)$$
defined as follows:  Given $p \in F$, and a loop $\gamma: [0,1] \to M$ based at $x$ and representing an element of $\pi_1(M, x)$, there is a unique lift of $\gamma$ to a horizontal curve $\tilde{\gamma}: [0,1] \to E$ with $\tilde{\gamma}(0) = p$ and $\tilde{\gamma}(1) \in F$.  Define $\rho(\gamma)(p) :=  \tilde{\gamma}(1)$.   One checks easily that $\rho(\gamma)(p)$ depends only on the based homotopy class of $\gamma$, that $\rho(\gamma)$ is a homeomorphism of $F$, and $\rho$ is a homomorphism $\pi_1(M, x) \to \Diff^r(F)$.  

Conversely, given a homomorphism $\rho: \pi_1(M, x) \to \Diff^r(F)$, one can build a flat bundle with holonomy $\rho$ as a quotient of the trivial $F$--bundle over the universal cover $\tilde{M}$.  The quotient is  
$$(\tilde{M} \times F) / \pi_1(M)$$
where $\pi_1(M)$ acts diagonally on $\tilde{M} \times F$ by deck transformations on $\tilde{M}$ and by $\rho$ on $F$; the foliation descends from the natural foliation of $\tilde{M} \times F$ by leaves of the form $\tilde{M} \times \{p\}$.  

These constructions produce a one-to-one correspondence
$$ \left\{ \text{Flat }F\text{--bundles of class }C^r \right\}    \overset{\text{holonomy}}\longleftrightarrow   \left\{ \text{representations } \pi_1(M, x) \to \Diff^r(F) \right\}.$$
This is more often stated as a correspondence between flat bundles \emph{up to equivalence} and representations \emph{up to conjugacy in $\Diff^r(F)$}.   

 Later, in the special case where $F = S^1$, we will also consider the equivalence relation on flat bundles that corresponds to \emph{semi-conjugate} representations.  Very roughly speaking, semi-conjugate flat bundles are obtained by replacing a leaf $L$ with a foliated copy of $L \times I$ (or replacing many leaves by foliated product regions), and/or by the inverse operation of collapsing foliated regions down to single leaves.  

From this perspective, questions about \emph{rigidity} and \emph{flexibility} of representations are questions about the (local) structure of the space of horizontal foliations on a given fiber bundle.    We'll begin to discuss these questions in Section \ref{intro rig sec}.  For now,  we focus on an even more fundamental problem:  

\begin{basic problem} \label{flat prob}
Which bundles admit a foliation transverse to the fibers?   Equivalently, which bundles can be obtained from a homomorphism $\rho: \pi_1(M) \to \Diff^r(F)$ as described above? 
\end{basic problem}

Despite its seemingly simple statement, the only nontrivial case in which Problem \ref{flat prob} has a complete answer is when the fiber $F$ is $S^1$ and base $M$ is a surface.  This answer is given by the \emph{Milnor--Wood inequality}.

\boldhead{Characterizing flat bundles: The Milnor--Wood inequality.}
\begin{convention}
For the rest of the text, $\Sigma_g$ denotes the closed surface of genus $g$, assuming always that $g \geq 2$.  
\end{convention}

The Milnor--Wood inequality characterizes which circle bundles over $\Sigma_g$ admit a foliation transverse to the fibers in terms of a classical invariant called the \emph{Euler number}.

\begin{theorem}[Milnor--Wood inequality, \cite{Milnor} \cite{Wood}]  \label{MW thm}
A topological $S^1$ bundle over $\Sigma_g$ admits a foliation transverse to the fibers if and only if the \emph{Euler number} $\euler$ of the bundle is less than or equal to $-\chi(\Sigma_g)$ in absolute value.    
\end{theorem}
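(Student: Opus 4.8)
The plan is to prove the two inequalities separately: first, that any flat $S^1$ bundle over $\Sigma_g$ has $|\euler| \leq -\chi(\Sigma_g) = 2g-2$, and second, that every integer $n$ with $|n| \leq 2g-2$ is realized as the Euler number of some flat bundle. Throughout I would work with the circle identified as $\R/\Z$ and use the central extension
$$
0 \to \Z \to \widetilde{\Homeo}(S^1) \to \Homeo(S^1) \to 1,
$$
where $\widetilde{\Homeo}(S^1)$ is the group of homeomorphisms of $\R$ commuting with integer translations. The key point is that a flat bundle corresponds (by the holonomy correspondence established above) to a representation $\rho \colon \pi_1(\Sigma_g) \to \Homeo(S^1)$, and the Euler number of the bundle is exactly the obstruction to lifting $\rho$ to $\widetilde{\Homeo}(S^1)$, which can be computed concretely as follows. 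Present $\pi_1(\Sigma_g) = \langle a_1,b_1,\dots,a_g,b_g \mid \prod_i [a_i,b_i] \rangle$, choose arbitrary lifts $\widetilde{\rho(a_i)}, \widetilde{\rho(b_i)} \in \widetilde{\Homeo}(S^1)$, and observe that $\prod_i [\widetilde{\rho(a_i)},\widetilde{\rho(b_i)}]$ is a lift of the identity, hence equals translation by some integer $n$; this integer is $\euler(\rho)$, independent of the choices of lifts (changing a lift alters it by a central element, which drops out of the commutator).

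For the upper bound, the engine is a \emph{bounded Euler cocycle} estimate. Define $\tau \colon \widetilde{\Homeo}(S^1) \to \R$ by $\tau(\tilde f) = \tilde f(0)$; then one checks the elementary inequality $|\tau(\tilde f \tilde g) - \tau(\tilde f) - \tau(\tilde g)| < 1$ for all $\tilde f, \tilde g$, so that $c(\tilde f, \tilde g) := \tau(\tilde f) + \tau(\tilde g) - \tau(\tilde f \tilde g)$ is a bounded $2$-cocycle (this is essentially Poincaré's rotation number cocycle). Feeding the relator word into this cocycle and summing telescopically bounds $|n|$ by the number of ``syllables'' in $\prod_i[a_i,b_i]$, which naively gives $|n| \leq 4g$ or so; the sharp bound $|n| \leq 2g-2$ requires the more careful commutator estimate of Wood, namely that for any $\tilde f, \tilde g \in \widetilde{\Homeo}(S^1)$ the translation number of $[\tilde f, \tilde g]$ lies in $(-1,1)$, together with the subadditivity of translation numbers applied to a product of $g$ commutators — this forces the total translation number into $(-g, g)$, and a parity/refinement argument (or Milnor's original argument using the flat structure on $TS^1$ and the $\SL(2,\Z)$ / piecewise-linear model) sharpens $g$ down to $g-1$... wait, one must be careful: the clean statement is $|\euler| \le -\chi(\Sigma_g) = 2g-2$, and the commutator bound $\mathrm{rot}[\tilde f,\tilde g]\in(-1,1)$ gives $|n|<g$ which is weaker; so I would instead follow Milnor's bound via the relation to the tangent bundle and linearization, or Wood's improved combinatorial estimate, to reach $2g-2$. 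The cleanest route I know is Milnor's: realize the circle bundle's Euler class inside $H^2(\Sigma_g;\Z)$, compare with the flat $\GL^+(2,\R)$ structure, and use that a flat $\GL^+(2,\R)$ bundle (hence flat $S^1$ bundle via the projective action) satisfies $|\euler| \le -\chi$ by the Milnor inequality for flat vector bundles.

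For realizability, I would exhibit explicit flat bundles. The Euler number $2g-2$ (equivalently $-(2g-2)$ with reversed orientation) is achieved by the \emph{Fuchsian representation}: $\Sigma_g$ carries a hyperbolic structure, giving a discrete faithful $\rho \colon \pi_1(\Sigma_g) \to \PSL(2,\R) = \isom^+(\H^2)$, and $\PSL(2,\R)$ acts on $S^1 = \partial \H^2$; the resulting flat bundle is the unit tangent bundle of $\Sigma_g$, whose Euler number is $\chi(\Sigma_g) = -(2g-2)$. To get all intermediate values $0 \le n \le 2g-2$, take a hyperbolic structure on a lower-genus surface or an orbifold and pull back, or more directly: compose a Fuchsian representation of a genus-$h$ surface group (Euler number $2h-2$) with a surjection $\pi_1(\Sigma_g) \twoheadrightarrow \pi_1(\Sigma_h)$ induced by a degree-one pinch map, for each $1 \le h \le g$; together with the trivial representation (Euler number $0$) and sign reversal this covers $\{-(2g-2),\dots,-1,0,1,\dots,2g-2\}$... one gap is the values between $2h-2$ and $2(h+1)-2$, i.e. the odd numbers — these are filled by connect-summing a Fuchsian piece with an explicit ``Euler number $\pm 1$'' flat bundle built by hand (a representation supported on a single handle using a parabolic or a rotation), adding Euler numbers across the connect sum. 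I expect the main obstacle to be the \textbf{sharp constant} in the upper bound: getting $2g-2$ rather than the softer $O(g)$ from the naive cocycle telescoping is exactly the content of Milnor's and Wood's theorems and requires either Milnor's vector-bundle linearization trick or Wood's delicate combinatorial analysis of how translation numbers of commutators interact — the realizability direction, by contrast, is concrete and essentially a matter of writing down enough examples.
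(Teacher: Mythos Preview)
Your setup is right and matches the paper: the Euler number of a flat bundle is the integer $n$ such that $\prod_i[\widetilde{\rho(a_i)},\widetilde{\rho(b_i)}]$ equals translation by $n$, and the realizability direction via Fuchsian representations, pinch maps, and adding a ``rotation handle'' for odd values is exactly what the paper sketches.

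The gap is in the upper bound, and it is not just a missing detail. First, ``subadditivity of translation numbers'' is not a valid principle: $\rott$ is only \emph{quasi}-additive, with defect $1$, i.e. $\rott(fg) \le \rott(f)+\rott(g)+1$. Second, your claimed consequence $|n|<g$ is both wrongly derived and, if it held, would be \emph{stronger} than $|n|\le 2g-2$ (since $g-1 \le 2g-2$ for $g\ge 1$), hence would contradict the realizability of $2g-2$ that you yourself establish. So that line of reasoning cannot be right. Third, retreating to ``Milnor's linearization trick or Wood's combinatorial analysis'' is a citation, not an argument.

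Here is what the paper actually does, and it is short. From the commutator estimate $\rott[\tilde f,\tilde g]\le 1$ and the sharp quasi-additivity $\rott(fg)\le \rott(f)+\rott(g)+1$ (equivalently, the Calegari--Walker formula for $R_{fg}$), induction gives
\[
\rott\Big(\prod_{i=1}^{g-1}[\widetilde{\rho(a_i)},\widetilde{\rho(b_i)}]\Big)\ \le\ 2g-3.
\]
Naively continuing one more step would give only $2g-1$. The trick is that you don't need quasi-additivity for the last factor: since the \emph{full} product $\prod_{i=1}^g c_i$ is an integer translation, one has exact additivity $\rott(\prod_{i=1}^g c_i)=\rott(\prod_{i=1}^{g-1} c_i)+\rott(c_g)$ (this is an elementary lemma: if $fg=T^k$ then $\rott(f)+\rott(g)=k$). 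Hence $n\le (2g-3)+1=2g-2$. This ``stop one step early and use exact additivity at the end'' is the missing idea in your proposal.
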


We will give an elementary definition of the Euler number in Section \ref{euler subsec} and eventually give a proof of the Milnor--Wood inequality as well\footnote{For impatient readers who happen to be familiar with classifying spaces, here is a quick definition of the Euler number to tide you over.
\begin{definition} \label{eu def 1}
Let $E$ be a topological $S^1$ bundle over a surface $\Sigma$.   There is an isomorphism $H^2(B\Homeo(S^1); \Z) \cong
 H^2(B\SO(2); \Z) \cong \Z$ induced by the inclusion of $\SO(2)$ into $\Homeo(S^1)$. Thus, the Euler class for $SO(2)$--bundles in $H^2(B\SO(2); \Z)$ (a generator) pulls back to a generator of $H^2(B\Homeo(S^1); \Z)$.  
The \emph{(integer) Euler class} of $E$ in $H^2(\Sigma; \Z)$ is the pullback of this element under the classifying map.  The \emph{Euler number} is the integer obtained by evaluating the Euler class on the fundamental class of $\Sigma$.
\end{definition}
}.

\medskip

As for bundles over surfaces with fibers \emph{other} than $S^1$,  the question of which bundles admit horizontal foliations is quite difficult, even when the fiber is a surface.  As an example, Kotschick and Morita \cite{KM} recently gave 
examples of flat surface bundles over surfaces with nonzero signature, but whether various other characteristic classes can be nonzero on flat bundles remains open.

%-------------
\subsection{Geometric actions on manifolds}

Our second motivation for studying surface group actions on the circle comes from the notion of a geometric action on a manifold. Recall that a \emph{cocompact lattice} $\Lambda$ in a Lie group $G$ is a discrete subgroup such that $G/\Lambda$ is compact.  

\begin{definition} \label{geom def}
Let $\Gamma$ be a finitely generated group, and $M$ a manifold. A representation $\rho: \Gamma \to \Homeo(M)$ is called \emph{geometric} if it is faithful and with image a cocompact lattice in a transitive, connected Lie group $G \subset \Homeo(M)$.  
\end{definition} 

This definition is motivated by Klein's notion of a \emph{geometry} as a pair $(X, G)$, where $X$ is a manifold and $G$ a Lie group acting transitively on $X$.  In the case where $M$ is a manifold of the same dimension as $X$, a \emph{geometric structure} or \emph{$(G, X)$ structure} on $M$, in the sense of Klein, is specified by a representation $\pi_1(M) \to G \subset \Homeo(X)$.
As a basic example, any compact surface of genus $g \geq 2$ admits a hyperbolic structure -- in fact, many different hyperbolic structures -- and these correspond roughly\footnote{Technically, it is \emph{marked} hyperbolic structures that correspond to conjugacy classes of discrete, faithful representations.} to the conjugacy classes of discrete, faithful representations $\rho: \pi_1(\Sigma_g) \to \PSL(2, \R) = \isom(\H^2)$.  The correspondence comes from considering $\Sigma_g = \H^2/\rho(\Gamma_g)$ with the induced hyperbolic metric. 
But this is a bit of a digression, at the moment we are more interested in the action of $\PSL(2,\R)$ on the circle...

%--------------
\boldhead{Geometric actions on $S^1$.}
Specializing to the situation $M = S^1$, it is not hard to give a complete classification of the connected Lie groups acting transitively on $M$.  These are $\SO(2)$, the group of rotations; $\PSL(2,\R)$, the group of M\"obius transformations; and the central extensions $\PSLk$ of $\PSL(2, \R)$ of the form
$$0 \to \Z/k\Z \to \PSLk \to \PSL(2,\R) \to 1.$$
The natural action of $\PSLk$ on $S^1$ comes from taking all lifts of M\"obius transformations of $S^1$ to the $k$-fold cover of $S^1$ (which is, conveniently, also a circle).  As a concrete example, $\PSL^{(2)} = \SL(2, \R)$ with its standard action on $S^1$ considered as the space of rays from the origin in $\R^2$.  
A nice proof of this classification of Lie subgroups can be found in \cite[Sect. 4.1]{Ghys Ens}; the essential observation -- originally due to Lie -- is that a Lie group acting faithfully on a 1-manifold can have at most a 3-dimensional Lie algebra.  
 
Given this classification, it is only a small step to describe all geometric actions.

\begin{theorem}[Geometric actions on $S^1$]  
Let $\rho: \Gamma \to \Homeo(S^1)$ be a geometric action.  Either
\begin{enumerate}[i)]
\item $\Gamma$ is finite cyclic, and $\rho$ is conjugate to a representation into $\SO(2)$, or
\item $\Gamma$ is a finite extension of a surface group, and $\rho$ is conjugate to a representation with image in $\PSLk$.  
\end{enumerate}
\end{theorem}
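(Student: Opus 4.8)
The plan is to combine the classification of connected transitive Lie subgroups of $\Homeo(S^1)$ recalled above with the standard structure theory of cocompact Fuchsian groups. First, after conjugating $\rho$ by an element of $\Homeo(S^1)$ we may assume that the transitive connected Lie group $G$ containing $\rho(\Gamma)$ as a cocompact lattice is \emph{equal} to one of $\SO(2)$, $\PSL(2,\R)$, or $\PSLk$. Since $\rho$ is faithful, $\Gamma$ is isomorphic to this lattice and $\rho$ already takes values in $G$, so in each case it suffices to identify the lattice up to isomorphism. If $G = \SO(2)$, then $\rho(\Gamma)$ is a discrete subgroup of the compact group $\SO(2) \cong S^1$, hence finite, and every finite subgroup of $S^1$ is cyclic; this is alternative (i), with $\rho$ literally a representation into $\SO(2)$.

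Next I would treat $G = \PSL(2,\R) = \isom^+(\H^2)$. Here $\Lambda := \rho(\Gamma)$ is a cocompact Fuchsian group, so $\H^2/\Lambda$ is a closed hyperbolic $2$-orbifold. Since $\Lambda$ is a finitely generated linear group, Selberg's lemma gives a torsion-free finite-index subgroup $\Lambda_0 \leq \Lambda$; then $\H^2/\Lambda_0$ is a closed surface, orientable because $\Lambda_0 \subset \PSL(2,\R)$ preserves orientation, and of genus $\geq 2$ by Gauss--Bonnet, so $\Lambda_0$ is a surface group. Replacing $\Lambda_0$ by its normal core in $\Lambda$ — still of finite index, and still a surface group, since finite-index subgroups of closed orientable surface groups are again of this type — exhibits $\Lambda$ as a finite extension of a surface group. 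Its action on $S^1$ is through $\PSL(2,\R) = \PSLk$ with $k=1$, so alternative (ii) holds.

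The case $G = \PSLk$ is where the real work lies, and I expect the handling of the central extension to be the main technical point. Let $q : \PSLk \to \PSL(2,\R)$ be the quotient by the finite central subgroup $Z \cong \Z/k\Z$. Since $q$ is a finite covering homomorphism it is proper and a local homeomorphism, from which one deduces that $\bar\Gamma := q(\rho(\Gamma))$ is discrete; and the continuous surjection $\PSLk/\rho(\Gamma) \to \PSL(2,\R)/\bar\Gamma$ shows $\bar\Gamma$ is cocompact. Thus $\bar\Gamma$ is a cocompact lattice in $\PSL(2,\R)$, so by the previous paragraph it contains a finite-index surface subgroup $S$. Moreover $\rho(\Gamma)$ sits in a central extension $1 \to A \to \rho(\Gamma) \to \bar\Gamma \to 1$ with finite cyclic kernel $A := \rho(\Gamma) \cap Z$.

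To finish I would pass to a further cover. Choose a surjection $S \to \Z/k\Z$ (for instance through $H_1(S;\Z) \cong \Z^{2g}$) and let $S'$ be its kernel, an index-$k$ surface subgroup of $S$, hence of $\bar\Gamma$. The extension class of $1 \to A \to \rho(\Gamma) \to \bar\Gamma \to 1$ lies in $H^2(\bar\Gamma; A)$, and its restriction to $H^2(S';A)$ factors through $H^2(S;A) \to H^2(S';A)$, which for a degree-$k$ cover of closed oriented surfaces is multiplication by $k$, hence the zero map on the $k$-torsion group $H^2(S;A) \cong A$. Therefore the pulled-back central extension $1 \to A \to \Gamma_{S'} \to S' \to 1$ (with $\Gamma_{S'}$ the preimage of $S'$) splits, giving $\Gamma_{S'} \cong S' \times A$. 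Hence $\rho(\Gamma)$ contains the surface group $S' \times \{0\}$ with finite index $[\bar\Gamma : S']\cdot |A|$, and taking the normal core as before shows $\rho(\Gamma)$ is a finite extension of a surface group; since $\rho$ has image in $G = \PSLk$, this is alternative (ii). Beyond routine bookkeeping, the two points needing care are the identification of torsion-free cocompact Fuchsian groups with closed surface groups and the cohomological vanishing of the central extension after passing to an index-$k$ cover — the latter being, as noted, the crux of the argument.
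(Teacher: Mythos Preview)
Your argument is correct and follows the same overall skeleton as the paper's proof: reduce to the classification of connected transitive Lie subgroups of $\Homeo(S^1)$, handle $\SO(2)$ trivially, and for $\PSLk$ project to $\PSL(2,\R)$ to obtain a cocompact Fuchsian group which is virtually a surface group. The paper, however, is far terser at the final step. It simply asserts that once the image in $\PSL(2,\R)$ contains a surface group $S$ of finite index, the lattice $H \subset \PSLk$ ``is a subgroup of the central extension of this surface group by $\Z/k\Z$ and hence itself a finite extension of a surface group'' --- with no further justification. You instead supply an honest argument: Selberg's lemma to produce the torsion-free Fuchsian subgroup, and then the cohomological observation that the class of the central extension $1 \to A \to \Gamma_{S} \to S \to 1$ dies after restriction to a degree-$k$ subgroup $S' \leq S$, since the transfer map $H^2(S;A) \to H^2(S';A)$ is multiplication by $k$ and $A \leq \Z/k\Z$ is $k$-torsion. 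This is a genuine addition of content over the paper's proof, which treats that step as folklore; your version would stand up to scrutiny in a context where the reader does not already know that finite central extensions of surface groups are virtually surface groups. The trade-off is length: the paper's two-line proof is appropriate for a survey, while yours is what one would want in a self-contained treatment.
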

\noindent In other words, up to finite index, all geometric actions on $S^1$ come from surface groups.  

\begin{proof}
The discrete, cocompact subgroups of $\SO(2)$ are exactly the finite cyclic groups.   Suppose now instead that $H$ is a cocompact subgroup of $\PSLk$.  The image of $H$ under projection to $\PSL(2,\R)$ is a discrete, cocompact subgroup of $\PSL(2,\R)$, 
so contains a surface group as finite index subgroup.  It follows that $H$ is a subgroup of the central extension of this surface group by $\Z/k\Z$ and hence itself a finite extension of a surface group.    
\end{proof}

There are many examples of geometric actions of surface groups on the circle.  The general construction is as follows.  

\begin{example}[Surface groups in $\PSLk$] \label{pslk ex}
Let $\Gamma_g = \pi_1(\Sigma_g)$ with standard presentation $\langle a_1, b_1, ... a_g, b_g \mid \prod_i [a_i, b_i] \rangle$.  Let  $\rho: \Gamma_g \to \PSL(2,\R)$ be a discrete, faithful representation.  For each generator $a_i$ and $b_i$ of $\Gamma_g$, pick a lift $\tilde{\rho}(a_i)$ of $\rho(a_i)$ and $\tilde{\rho}(b_i)$ of $\rho(b_i)$ to $\PSLk$.   One can check directly that the relation $\prod_i [\tilde{\rho}(a_i), \tilde{\rho}(b_i)] = \id $ is satisfied precisely when $k$ divides $2g-2$; in this case, we have defined an action of $\Gamma_g$ on the ($k$-fold covering) circle by homeomorphisms in $\PSLk$.  
\end{example}

\begin{remark}[for the experts...]  
Example \ref{pslk ex} can be nicely rephrased in more cohomological language: the obstruction to lifting an action of $\Gamma_g$ on $S^1$ to the $k$-fold cover is exactly \emph{divisibility of the Euler class by k}, as the lifted action gives a flat bundle which is the $k$-fold fiber-wise cover of the original. Faithful representations of $\Gamma_g$ with image a lattice in $\PSL(2,\R)$ correspond to bundles topologically equivalent to the unit tangent bundle of $\Sigma_g$, which has Euler number $\pm \chi(\Sigma)$, depending on orientation.  
\end{remark}

We'll see that geometric actions are one source of rigidity, and in the case of actions of surface groups on $S^1$, conjecturally the \emph{only} source of a strong form of $C^0$ rigidity (see Theorem \ref{invent thm} and Conjecture \ref{invent conj} below).

%----------------
\subsection{Representation spaces and character varieties}  \label{char var subsec}

Closely related to the theme of geometric actions is the classical study of representation spaces and character varieties.  This subject also has particularly strong ties to surface groups.  

\begin{definition}
Let $\Gamma$ be a discrete group, and $G$ a topological group.  The \emph{representation space} $\Hom(\Gamma, G)$ is the set of homomorphisms $\rho: \Gamma \to G$, with the topology of pointwise convergence. 

\end{definition}
\noindent In this topology, $\rho_n$ converges to $\rho$ if $\rho_n(\gamma)$ converges to $\rho(\gamma)$ in $G$ for each $\gamma \in \Gamma$.  This is equivalent to requiring convergence of $\rho_n(s)$ for each $s$ in a generating set for $\Gamma$.  It is sometimes also called the ``algebraic" topology on $\Hom(\Gamma, G)$.  
\medskip

The \emph{character variety} $X(\Gamma, G)$ is, roughly speaking, the quotient of $\Hom(\Gamma, G)$ by the conjugation action of $G$.  
To explain this terminology (why ``variety"?  why only ``roughly"?), we detour for a minute into the special case where $G$ is an linear algebraic group.  
In this case, $\Hom(\Gamma, G)$ has the structure of an algebraic variety whenever $\Gamma$ is finitely presented.  Concretely, if $S$ is a finite generating set for $\Gamma$, then $\Hom(\Gamma, G)$ can be realized as a subset of $G^{|S|}$ cut out by finitely many polynomial equations coming from the relators in $\Gamma$.   Unfortunately, the quotient $\Hom(\Gamma, G)/G$ is not a variety -- in general the quotient is not even Hausdorff.  However, there is a better way to take a kind of algebraic-geometric quotient (the details of which we will not discuss here) and the resulting (Hausdorff, nice) space is called the \emph{character variety}. In many cases, it can genuinely be identified with the variety of characters of representations $\Gamma \to G$.   See e.g. \cite{Shalen} for a thoughtful introduction to the special case $G = \PSL(2,\C)$, and \cite{Labourie} for a more general treatment.

\boldhead{Character varieties of surface groups.}
There are a number reasons why character varieties of surface groups are particularly interesting, many coming from the perspective of moduli spaces of geometric structures.   For example, we mentioned above the relationship between discrete, faithful representations $\Gamma_g \to \PSL(2,\R)$ and hyperbolic structures on $\Sigma_g$.  There are analogous relationships between convex real projective structures and certain representations $\Gamma_g \to \mathrm{PGL}(3,\R)$, between Higgs bundles on Riemann surfaces and representations to $\SL(n, \C)$, etc.\footnote{See \cite{Goldman survey} for a more detailed description of the relationships between moduli spaces and character varieties}    
Representation spaces of surface groups are also interesting in their own right -- for example, they admit a natural symplectic structure (discovered by Atiyah and Bott for spaces of representations into compact Lie groups, and Goldman in the noncompact case).  We cannot possibly do justice to the theory here, and instead refer the reader to the recent book of Labourie \cite{Labourie}.  

However, it will be convenient for us to say a few words about the basic motivating case $\Hom(\Gamma_g, \PSL(2,\R))$.  For the sake of simplicity, we will discuss only representation spaces and not character varieties.

\boldhead{The structure of $\Hom(\Gamma_g, \PSL(2,\R))$.}
As $\PSL(2,\R)$ is an algebraic group, and $\Gamma_g$ finitely presented, $\Hom(\Gamma_g, \PSL(2,\R))$ is a real algebraic variety. In particular, this implies that $\Hom(\Gamma_g, \PSL(2,\R))$ has only finitely many components.  

How many?  The Euler number of representations can be used to give a lower bound.  By the Milnor--Wood inequality (Theorem \ref{MW thm}), there are $4g-3$ possible values of the Euler number of a representation $\rho \in \Hom(\Gamma_g, \PSL(2,\R))$,  ranging from $\chi(\Sigma_g)$ to $-\chi(\Sigma_g)$.  It's not hard to construct examples to show that each value in this range is attained:  in brief, a hyperbolic structure on $\Gamma_g$ defines a representation with Euler number $-2g+2$, a singular hyperbolic structure with two order 2 cone points has Euler number $-2g+1$, and every other negative value can then be constructed by a surjection $\Gamma_g \to \Gamma_h$ followed by a representation $\Gamma_h \to \PSL(2, \R)$ as above.  Positive values can then be attained by conjugating by an orientation-reversing map of $S^1$.  

We'll also see later that the Euler number varies continuously, and hence is constant on connected components of $\Hom(\Gamma_g, \PSL(2,\R))$.  This means that $\Hom(\Gamma_g, \PSL(2,\R))$ has at least $4g-3$ connected components.    A remarkable theorem of Goldman states that this easy bound is sharp.

\begin{theorem}[Goldman \cite{Goldman}] \label{goldman thm}
$\Hom(\Gamma_g, \PSL(2,\R))$ has $4g-3$ connected components, completely classified by the Euler number.
\end{theorem}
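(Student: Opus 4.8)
The plan is to establish the two halves of Goldman's theorem separately: that each of the $4g-3$ Euler number values is attained by at least one component (already sketched in the excerpt), and, the genuinely hard part, that each level set $\euler^{-1}(k) \subset \Hom(\Gamma_g,\PSL(2,\R))$ is \emph{connected}. Since we already know $\euler$ is continuous and integer-valued, hence locally constant, connectedness of each level set is equivalent to the statement that $\Hom(\Gamma_g,\PSL(2,\R))$ has \emph{exactly} $4g-3$ components. So the whole content is: any two representations with the same Euler number can be joined by a path.

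First I would dispose of the extremal case $|\euler(\rho)| = |\chi(\Sigma_g)| = 2g-2$. Here one invokes the (at the time deep, now classical) fact that a representation with maximal Euler number is discrete and faithful — this is essentially Goldman's converse to Milnor--Wood, and it identifies $\euler^{-1}(\pm(2g-2))$ with the (two copies of) Teichm\"uller space via the holonomy of hyperbolic structures. Teichm\"uller space is a cell, in particular connected, so these two components are understood. The reason to treat this case first is that it anchors an inductive or reductive scheme for the intermediate values.

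For the non-extremal values $|k| < 2g-2$, the strategy is to show every $\rho$ with $\euler(\rho) = k$ can be deformed to a standard model. The key structural input is that such a $\rho$ is \emph{not} forced to be discrete and faithful, so it has some flexibility: one shows the image either has a global fixed point, or preserves a probability measure, or otherwise is reducible/elementary in a way that lets one push it into a lower-complexity situation, OR one uses the fact that the relator $\prod[a_i,b_i]$ imposes the Euler number as the only constraint and argues directly on the relator equation. Concretely, Goldman's argument proceeds by realizing that the map $\PSL(2,\R)^{2g} \to \PSL(2,\R)$, $(A_1,B_1,\ldots) \mapsto \prod[A_i,B_i]$ is a submersion away from a small locus, so that $\Hom(\Gamma_g,\PSL(2,\R))$, being the preimage of the identity, is a manifold of the expected dimension off that locus, and one then computes its components. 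The main technical engine is an analysis of the commutator map and its fibers — identifying exactly when $\prod[A_i,B_i] = \id$ and understanding the topology of the solution set — combined with a reduction that any representation can be deformed so that, say, the first $g-1$ handles carry a fixed discrete faithful piece and the last handle absorbs the remaining Euler number, a quantity small enough (in absolute value, at most ... well, controlled by how many handles were freed up) to be realized flexibly.

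The main obstacle, and the part I expect to consume nearly all the work, is precisely this analysis of the commutator map $\mu(A,B) = [A,B]$ on $\PSL(2,\R)^2$ and its iterates: one must understand for which products $C \in \PSL(2,\R)$ the fiber $\mu^{-1}(C)$ is connected, handle the singular values of $\mu$ (where the rank drops — e.g. near reducible configurations), and then propagate this through the $g$-fold product while keeping track of the Euler number, which appears as a kind of ``winding'' bookkeeping on lifts to $\widetilde{\PSL(2,\R)}$. Getting the Euler number to behave additively under the handle-by-handle decomposition, and verifying that the deformation which concentrates the Euler number onto one handle stays inside $\Hom$, are the delicate points. Everything else — continuity of $\euler$, the lower bound of $4g-3$ components, attainment of each value — is either already in the excerpt or a routine construction.
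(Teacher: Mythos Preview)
The paper does not actually prove this theorem. It is stated as a cited result (attributed to \cite{Goldman}), and the surrounding text only establishes the easy direction: the Milnor--Wood inequality gives $4g-3$ possible Euler numbers, explicit constructions realize each value, and continuity of the Euler number shows it is constant on components, yielding \emph{at least} $4g-3$ components. The hard direction --- that each level set $\euler^{-1}(k)$ is connected --- is not proved here; the only further comment is the remark that ``Goldman's proof of Theorem \ref{goldman thm} relies heavily on the Lie group (i.e.\ manifold) structure of $\PSL(2,\R)$.''

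So there is nothing in the paper to compare your proposal against. That said, your sketch is a reasonable outline of how Goldman's actual argument in \cite{Goldman} proceeds: the core really is a fiber-connectivity analysis of the commutator/relator map $G^{2g} \to G$, combined with an inductive handle-by-handle reduction and careful treatment of the singular (reducible) locus. Your description of the extremal case via Teichm\"uller space and of the Euler number bookkeeping on lifts to $\widetilde{\PSL(2,\R)}$ is also in the right spirit. If you intend to write this up, be aware that the genuinely technical content is the connectivity of the fibers of the single commutator map $[\,,\,]: G \times G \to G$ over each conjugacy class, together with the surjectivity/regularity statements needed to run the induction --- your proposal gestures at this but does not yet isolate the precise lemmas.
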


The situation is quite different when $G$ is not algebraic.  For example,

\begin{question}[\textbf{Open}] \label{comp quest}
Does $\Hom(\Gamma_g, \Homeo(S^1))$ have finitely many connected components?  What about $\Hom(\Gamma_g, \Diff^r(S^1))$, for any $r$?  
\end{question}

Only very recently was it shown that the Euler number does \emph{not} classify connected components of $\Hom(\Gamma_g, \Homeo(S^1))$.  A particular consequence of the work in \cite{Invent} and \cite{Bowden} gives the following.  

\begin{theorem}[\cite{Invent}; \cite{Bowden} for the case $G = \Diff^\infty(S^1)$] \label{comp thm}
Let $G = \Diff^r(S^1)$ for any $0 \leq r \leq \infty$.  There are more than $2^g$ connected components of $\Hom(\Gamma_g, G)$ consisting of representations with Euler number $g-1$.  
\end{theorem}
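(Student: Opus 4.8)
The plan is to produce, for a fixed discrete faithful representation $\rho_0 \colon \Gamma_g \to \PSL(2,\R)$ with Euler number $g-1$ (which exists by Example \ref{pslk ex}, taking $k=2$ and a hyperbolic structure, i.e. lifting to $\PSL^{(2)}=\SL(2,\R)$ so that the resulting action on $S^1$ has half the Euler number of the unit tangent bundle), many deformations of $\rho_0$ that all share the Euler number $g-1$ but that lie in distinct connected components of $\Hom(\Gamma_g, G)$. The separation of components will be detected not by the Euler number but by a finer invariant -- the collection of \emph{rotation numbers} (or rather the full semi-conjugacy data) of the images of a carefully chosen family of elements of $\Gamma_g$. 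Concretely, I would decompose $\Sigma_g$ along a collection of $2^g$-many, or rather $g$-many, disjoint non-separating simple closed curves $c_1,\dots,c_g$ whose complement is connected; each $c_i$ is represented by a primitive element $\gamma_i \in \Gamma_g$, and these can be chosen so that $\rho_0(\gamma_i)$ is a hyperbolic element of $\PSL(2,\R)$, hence (in the $\PSL^{(2)}$ action) has rotation number $0$.

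The key construction is a local surgery supported near each curve $c_i$: one can modify $\rho_0$ in a neighbourhood of $c_i$ -- this is a ``bending'' or ``Dehn twist flow'' type deformation using the centralizer of $\rho_0(\gamma_i)$ -- to push the action of $\gamma_i$ from a hyperbolic element to a parabolic and then past it, so that along the deformation the rotation number $\rott(\gamma_i)$ of a chosen lift moves off $0$ and becomes a prescribed nonzero value, while the Euler number of the \emph{global} representation is unchanged because the surgery is a compactly supported modification that does not affect the relator. Because the $c_i$ are disjoint and the complement is connected, these surgeries can be performed independently, one for each nonempty subset $S \subseteq \{1,\dots,g\}$ (or with varying amounts), yielding $2^g$ representations $\rho_S$, all with Euler number $g-1$. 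The point is that $\rho_S$ and $\rho_{S'}$ are distinguished by which of the $\gamma_i$ have $\rho(\gamma_i)$ with nonzero rotation number (equivalently, by a ``marked'' invariant that is locally constant on $\Hom(\Gamma_g,G)$): rotation number of a fixed group element is a continuous, hence locally constant when it is irrational or when one tracks the semi-conjugacy class, function on the representation variety, so a path in $\Hom(\Gamma_g, G)$ from $\rho_S$ to $\rho_{S'}$ is obstructed. One must take care that the deformation lands in $\Diff^r$ for the desired $r$; since the surgeries are built from flows along smooth vector fields (the centralizer one-parameter subgroups), smoothness is automatic, and the $r=0$ case follows a fortiori.

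The main obstacle, and the part requiring the most care, is the \textbf{separation argument}: showing that $\rho_S$ and $\rho_{S'}$ genuinely lie in different components, i.e. producing an invariant that is simultaneously (a) locally constant on all of $\Hom(\Gamma_g, G)$, (b) insensitive to the Euler number so that it gives new information, and (c) actually distinct on $\rho_S$ versus $\rho_{S'}$. Rotation number of a single element is \emph{not} locally constant in general (it can vary continuously through irrational values, and a hyperbolic-to-parabolic degeneration shows it can stick at a rational value on one side), so the real content is to set up the deformation so that on each $\rho_S$ the relevant $\gamma_i$ has \emph{fixed points} (rotation number exactly $0$ but not identity) versus \emph{no fixed points} with rotation number in a controlled irrational or ``essentially positive'' regime -- and then to invoke the rigidity of this dichotomy under the specific constraints coming from the other relations in $\Gamma_g$, together with the Milnor--Wood bound forcing any deformation to preserve Euler number $g-1$. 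Here I would lean on the theory developed in the later sections of the paper (the rotation number coordinates of \cite{Invent}) to package this: the Euler number $g-1$ is extremal-minus-one, which sharply constrains the dynamics (the action must be ``close to'' a $\PSL^{(2)}$-Fuchsian one in a structured way), and within that constrained family the fixed-point data of the $\gamma_i$ becomes a genuine locally constant invariant. Verifying that the $2^g$ surgered representations satisfy all the group relations -- i.e. that the independent local modifications fit together into an honest homomorphism -- is the routine-but-essential bookkeeping step, handled by choosing the surgery supports to be disjoint from each other and from a neighbourhood of the basepoint where the defining relator is read off.
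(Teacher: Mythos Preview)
Your proposal has a genuine gap at exactly the point you flag as ``the main obstacle.''  The invariant you propose --- whether $\rho(\gamma_i)$ has a fixed point, or more generally its rotation number --- is \emph{not} locally constant on $\Hom(\Gamma_g, G)$, as you yourself note.  You then assert that the constraint $\euler(\rho)=g-1$ will somehow force this data to become locally constant, but you give no mechanism for this, and in fact it is false without additional input: there is no reason a priori why a representation with Euler number $g-1$ cannot be deformed so that $\rot(\rho(\gamma_i))$ moves continuously.  Appealing vaguely to ``the rotation number coordinates of \cite{Invent}'' is not a proof; it is precisely the content of the rigidity theorem that must be supplied.  There is also a problem with your construction itself: bending or twisting along a curve $c$ (in the sense of Examples \ref{bending} and \ref{twisting}) does \emph{not} change $\rho(c)$ --- bending conjugates one side by centralizing elements and leaves $\rho(c)$ fixed, and twisting along $a$ leaves $\rho(a)$ fixed while modifying $\rho(b)$.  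So the surgery you describe does not move $\rot(\rho(\gamma_i))$ off zero in the way you claim.

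The paper's argument (carried out as Theorem \ref{invent comp cor}) is structurally quite different and avoids both difficulties.  Rather than starting from one $\rho_0$ and deforming, one takes a single Fuchsian $\rho: \Gamma_g \to \PSL(2,\R)$ and considers \emph{all} of its lifts to $\PSL^{(2)}$: there are $|\Hom(\Gamma_g, \Z/2\Z)| = 2^{2g}$ such lifts, each a geometric representation with Euler number $g-1$, and they are pairwise non-semi-conjugate because they assign different rotation numbers to the standard generators.  The separation of components then comes not from any local-constancy of rotation numbers, but from the much stronger Theorem \ref{invent thm}: each geometric representation is \emph{fully rigid}, meaning its entire connected component is a single semi-conjugacy class.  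Since the $2^{2g}$ lifts are pairwise non-semi-conjugate, they lie in $2^{2g}$ distinct components.  No surgery, no delicate tracking of fixed-point data under deformation --- the rigidity theorem does all the work.
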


The essential ingredient in Theorem \ref{comp thm} is a \emph{rigidity result} (Theorem \ref{invent thm} below), which is one of the primary motivations of this paper.  Theorem \ref{invent thm} in fact says much more about the number of connected components of $\Hom(\Gamma_g, G)$ -- see Theorem \ref{invent comp cor} below.  
We'll explain the theorem and its proof in section \ref{pf sec}.  Unfortunately, the proof does not seem to give a hint at whether Question \ref{comp quest} has a positive or negative answer!

\begin{remark}[Regularity matters]
Although the statement of Theorem \ref{comp thm} is uniform over all $G= \Diff^r(S^1)$, i.e. independent of $r$, one generally expects the topology of $\Hom(\Gamma_g, \Diff^r(S^1))$ to depend on $r$.  For instance, for the \emph{genus 1} surface group $\Z^2$, we have
\begin{itemize}
\item Easy theorem: $\Hom(\Z^2, \Homeo(S^1))$ is connected.
\item Highly nontrivial theorem: $\Hom(\Z^2, \Diff^1(S^1))$ is path connected (Navas, \cite{Navas comp}).
\item Open question: is $\Hom(\Z^2, \Diff^1(S^1))$ \emph{locally} (path) connected?
\item Open question: is $\Hom(\Z^2, \Diff^r(S^1))$ connected, for any $r \geq 2$? 
\end{itemize}
See \cite{BE} for discussion and progress on the $\Hom(\Z^2, \Diff^\infty(S^1))$ case, and \cite{Navas} for many discussions of (often quite subtle) issues of regularity of group actions on the circle.  
\end{remark}

\boldhead{Infinite dimensional Lie groups?}
Goldman's proof of Theorem \ref{goldman thm} relies heavily on the Lie group (i.e. manifold) structure of $\PSL(2,\R)$.   It is conceivable, though perhaps optimistic, that some of his strategy might carry over to the study of representations to $\Diff^r(S^1)$.   For any manifold $M$,  and $r \geq 1$, the group $\Diff^r(M)$ has a natural smooth structure (as a Banach manifold, or Fr\'echet manifold for the case of $\Diff^\infty(M))$, and the tangent space at the identity can be identified with the Lie algebra of $C^r$ vector fields on $M$, making $\Diff^r(M)$ an infinite dimensional Lie group.   In the case $M=S^1$, the structure of the Lie algebra $\mathrm{Vect}(S^1)$ is quite well understood. 
Although this doesn't carry over for $\Homeo(M)$, there are good candidates for the ``Lie algebra" of homeomorphisms of the circle, see e.g \cite{MP}.

%-------------------------------------------------------------------------------
\section{An introduction to rigidity} \label{intro rig sec}

We turn now to the main theme of this work, \emph{rigidity} of group actions on $S^1$.  We begin by introducing several notions of rigidity, ranging from local to global.   

\subsection{Local rigidity}
Loosely speaking, an action of a group $\Gamma$ is \emph{rigid} if deformations of this action are ``trivial" in some sense, usually taken to mean that they are all conjugate.   There are several ways to formalize this notion, for example:

\begin{rigdef}[Local rigidity]  \label{loc rig}
Let $G$ be a topological group.  A representation $\rho: \Gamma \to G$ is \emph{locally rigid} if there is a neighborhood $U$ of $\rho$ in $\Hom(\Gamma, G)$ such that each $\rho' \in U$ is conjugate to $\rho$ in $G$.
\footnote{Often it is additionally required that the conjugacy be by an element of $G$ close to the identity.}
\end{rigdef}

In the special case $G = \Diff^\infty(M)$, there is a stronger version of local rigidity called \emph{differentiable rigidity}.  Here, the conjugacy has higher regularity than expected.  

\begin{rigdef}[Local differentiable rigidity, classical sense]  \label{rigdef2}
A representation $\rho: \Gamma \to \Diff^\infty(M)$ has \emph{local differentiable rigidity} if every representation in $\Hom(\Gamma, \Diff^\infty(M))$ sufficiently close to $\rho$ in the \emph{$C^1$ topology} on $\Diff^\infty(M)$ is (smoothly) conjugate to $\rho$. 
\end{rigdef}

\noindent There are many possible variants of Definition \ref{rigdef2} -- one can replace $C^1$ and smooth with $C^r$ and $C^s$, one can ask only that representations in $\Hom(\Gamma, \Diff^\infty(M))$ close to $\rho$ \emph{and} $C^0$ conjugate to $\rho$ are smoothly conjugate to $\rho$, etc.  

\medskip
In the opposite direction, \emph{structural stability} is a notion of rigidity where the conjugacy is less regular than the perturbation.  
Classically, a flow $f_t$ on manifold $M$ is said to be \emph{structurally stable} if every sufficiently small $C^1$ perturbation $g_t$ of the flow is topologically equivalent to $f_t$, meaning that there is a homeomorphism of $M$ mapping flowlines of $g_t$ to flowlines of $f_t$.  A natural adaptation of this definition to group actions is the following. 

\begin{rigdef}[Structural stability]  \label{struc def}
A representation $\rho: \Gamma \to \Diff^1(M)$ is \emph{structurally stable} if there is a neighborhood $U$ of $\rho$ in $\Hom(\Gamma, \Diff^1(M))$ consisting of representations $C^0$ conjugate to $\rho$.  
\end{rigdef}

In the next section we'll give two proofs of structural stability for actions of surface groups on the circle.

\boldhead{Deformations in $\Homeo(S^1)$.}
In the case where $G = \Homeo(S^1)$, there is another reasonable notion of ``trivial deformation" of an action.  This comes from a construction of Denjoy.
\begin{construction}[``Denjoy-ing" an action] \label{denjoy trick}
Let $\Gamma$ be a countable group, and $\rho \in \Hom(\Gamma, \Homeo(S^1))$.  Enumerate the points of an orbit $\rho(\Gamma) x = \{x_1, x_2, x_3, ... \}$, and modify $S^1$ by replacing each point $x_i$ with an interval of length $t 2^{-i}$ to get a new circle $S_t$.   There is a unique affine map from the interval inserted at a point $x_i$ to the interval inserted at any $\rho(\gamma) x_i = x_j$, this gives a well-defined means of extending $\rho$ to an action $\rho_t$ of $\Gamma$ on the circle $S_t$.   As $t \to 0$, this action approaches the original action $\rho$ in $\Hom(\Gamma, \Homeo(S^1))$, provided that one makes a reasonable identification between $S^1$ and the larger circles $S_t$.  
\end{construction}

Generalizing Denjoy's construction is the notion of \emph{semi-conjugacy}, which we already hinted at in our discussion of equivalence of flat bundles in Section \ref{bundle subsec}.  Recall that two representations $\rho$ and $\rho': \Gamma \to \Homeo(S^1)$ are \emph{conjugate} if there is a homeomorphism $h$ of $S^1$ such that 
$$h \rho(\gamma)(x) = \rho'(\gamma) h(x), \text{ for all } \gamma \in \Gamma \text{ and } x \in S^1.$$ 
Semi-conjugacy is essentially the same notion, but $h$ is no longer required to be a homeomorphism: it is permitted to collapse intervals to points, or have points of discontinuity where it performs the Denjoy trick, blowing up points into intervals.   To make the definition completely formal requires a bit of care, so we defer the work to Section \ref{rot subsec}. 
There, we will also see that semi-conjugate representations have many of the same dynamical properties, so it is entirely reasonable to consider a semi-conjugacy to be a ``trivial deformation". 

As construction \ref{denjoy trick} can be applied to any action of a countable group, there is no hope that such groups could be locally rigid in the sense of Definition \ref{loc rig}.  Instead, we make the following obvious modification.

\begin{rigdef}[Local rigidity in $\Homeo(S^1)$]
Let $\Gamma$ be a countable group.  A representation $\rho: \Gamma \to \Homeo(S^1)$ is \emph{locally rigid} if there is a neighborhood $U$ of $\rho$ in $\Hom(\Gamma, G)$ such that each $\rho' \in U$ is semi-conjugate to $\rho$.
\end{rigdef} 

Although Construction \ref{denjoy trick} produces an action by homeomorphisms, there is a way to construct semi-conjugate $C^1$-actions of certain groups by a similar trick (due to Denjoy and Pixton).  However, the construction cannot generally be made $C^2$, and the case of intermediate regularity is quite interesting!  A detailed discussion of such regularity issues is given by Navas in \cite[Chapt. 3]{Navas}.

%----------------
\subsection{Full rigidity}
If one considers not only small deformations of a representation $\rho: \Gamma \to G$, but \emph{arbitrary} deformations, one is led to a statement about the connected component of $\rho$ in $\Hom(\Gamma, G)$.   Rigidity then should mean that the connected component is ``as small as possible".   There is a particularly nice way to formalize this notion in the case where $G = \Homeo(S^1)$, we call this \emph{full rigidity}.  (Although ``global rigidity" or ``strong rigidity" would also make sense, these names were already taken!)   

\begin{rigdef}[Full rigidity] \label{totrig def}
A representation $\rho: \Gamma \to \Homeo(S^1)$ is \emph{fully rigid} if the connected component of $\Hom(\Gamma, \Homeo(S^1))$  containing $\rho$ consists of a single semi-conjugacy class.  
\end{rigdef}

\noindent The set of all representations semi-conjugate to a given one is path-connected (see the remark after Definition \ref{C-D def}), so this definition really does describe representations with the smallest possible connected component.    

An easy example of full rigidity is the following. We leave the proof as an exercise. 
\begin{example} \label{finite ex}
The inclusion of $\Z/m\Z$ into $\Homeo(S^1)$ as a group of rotations is fully rigid.  In fact, \emph{every} action of any finite group on $S^1$ is fully rigid.
\end{example}

\noindent A much less trivial example of fully rigid actions follows from a theorem of Matsumoto, Theorem \ref{mats thm}, which we will state in Section \ref{max subsec}.  

\begin{example}[Consequence of Theorem \ref{mats thm}]  \label{mats ex}
Let $\rho: \Gamma_g \to \Homeo(S^1)$ be a faithful representation with image a discrete subgroup of $\PSL(2,\R)$.  Then $\rho$ is fully rigid.  
\end{example}

In Section \ref{themes sec}, we'll also describe a  theorem on \emph{geometric actions} subsuming both of the above examples (c.f. Theorem \ref{invent thm} ``geometric actions on $S^1$ are fully rigid").

%-----------------
\subsection{Strong (Mostow) Rigidity} \label{mostow subsec}

Although not our primary focus here, we mention briefly a third definition of rigidity coming from the tradition of lattices in Lie groups.  A more in-depth introduction can be found in \cite{GP} or \cite{Spatzier}.  

\begin{rigdef}[Strong rigidity]
A lattice $\Gamma$ in a Lie group $G$ is \emph{strongly rigid} if any homomorphism $\rho: \Gamma \to G$ whose image is also a lattice extends to an isomorphism of $G$.  
\end{rigdef}

Mostow proved strong rigidity for cocompact lattices in $G = \isom(\H^n)$, provided that $n \geq 3$; this was extended by Prasad to lattices in isometry groups of other locally symmetric spaces.  Margulis' superrigidity theorem can be seen as a further generalization of this kind of result.  

But of interest to us is what happens to Mostow's original rigidity theorem when $n = 2$.   
The cocompact lattices in $\isom(\H^2) = \PSL(2,\R)$ are surface groups, and they are \emph{not} rigid in $\PSL(2,\R)$.  Rather, there is a $6g-6$-dimensional \emph{Teichm\"uller space} of $\PSL(2,\R)$-conjugacy classes of discrete, faithful representations $\Gamma_g \to \PSL(2,\R)$, forming two connected components of the character variety $\Hom(\Gamma_g, \PSL(2,\R))/ \PSL(2,\R)$.   As we mentioned earlier, these parametrize the (marked) hyperbolic structures on $\Sigma_g$.  

However, it is still possible to recover a kind of rigidity for these representations by thinking of $\PSL(2,\R)$ as a group of homeomorphisms of the circle.
Although not conjugate in $\PSL(2,\R)$, any two discrete, faithful representations $\Gamma_g \to \PSL(2,\R)$ \emph{are} conjugate by a  (possibly orientation-reversing) homeomorphism of $S^1$.  To see this using a little hyperbolic geometry think of $\PSL(2,\R)$ acting by M\"obius transformations on the Poincar\'e disc model of $\H^2$.  Two discrete faithful representations of $\Gamma_g$ will have homeomorphic fundamental domains, a homeomorphism between them can be extended equivariantly to a homeomorphism of the disc, including the boundary circle, which induces the conjugacy. 

We conclude this section by mentioning an even stronger rigidity result,  in the spirit of ``differentiable rigidity", that comes from considering the regularity of the conjugating homeomorphism.  

\begin{theorem}[``Mostow rigidity on the circle" \cite{Mostow}]   \label{mostow circle} 
Let $\rho$ and $\rho'$ be discrete, faithful representations of $\Gamma_g$ into $\PSL(2,\R)$; with $h \rho h^{-1} = \rho'$. 
If the derivative $h'(x)$ is nonzero at any point $x$ where it is defined, then $h \in \PSL(2,\R)$.  
\end{theorem}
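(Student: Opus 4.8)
### Proof proposal

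The plan is to exploit the fact that a conjugacy $h$ with $h\rho h^{-1}=\rho'$ intertwines the dynamics of the two Fuchsian actions, together with the strong structure of those dynamics: the limit set of a cocompact Fuchsian group is all of $S^1$, and the action on $S^1$ is \emph{expansive} in a quantitative sense. First I would recall that $\Gamma_g$, acting via the discrete faithful representation $\rho$, has a dense set of hyperbolic elements, each of which has exactly two fixed points on $S^1$ -- one attracting, one repelling -- and that near the attracting fixed point the map is a $C^\infty$ (indeed M\"obius) contraction with derivative strictly less than $1$. Since $h$ conjugates $\rho(\gamma)$ to $\rho'(\gamma)$, it carries the attracting/repelling fixed point of $\rho(\gamma)$ to that of $\rho'(\gamma)$.

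The key step is a derivative-transport argument at a hyperbolic fixed point. Suppose $h'(x_0)$ is defined and nonzero, where $x_0$ is not (yet) assumed special. Using minimality of the Fuchsian action I can find a sequence of group elements $\gamma_n$ so that $\rho(\gamma_n)$ contracts a neighborhood of $x_0$ toward an attracting fixed point $p$ with $\rho(\gamma_n)(x_0)\to p$, and simultaneously $\rho'(\gamma_n)$ contracts toward $h(p)=:p'$. Writing $h = \rho'(\gamma_n)^{-1}\circ h\circ\rho(\gamma_n)$ and differentiating via the chain rule at $x_0$, one gets
\[
h'(x_0) \;=\; \bigl(\rho'(\gamma_n)\bigr)'\!\bigl(h(x_0)\bigr)^{-1}\; h'\!\bigl(\rho(\gamma_n)(x_0)\bigr)\; \bigl(\rho(\gamma_n)\bigr)'(x_0).
\]
Taking a limit and using the smoothness and hyperbolicity of the M\"obius maps near their fixed points, the ratio of derivatives $\bigl(\rho(\gamma_n)\bigr)'(x_0)/\bigl(\rho'(\gamma_n)\bigr)'(h(x_0))$ is controlled, and one deduces that $h'$ is defined at $p$ with a nonzero value, and moreover that $h$ is differentiable with nonzero derivative \emph{at every point of the orbit closure}, i.e. on all of $S^1$. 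A standard bootstrap (a homeomorphism of $S^1$ that is everywhere differentiable with nowhere-zero derivative, and whose derivative varies in a controlled equivariant way) then promotes $h$ to a $C^1$ diffeomorphism. Once $h$ is a $C^1$ diffeomorphism conjugating one cocompact Fuchsian action to another, I would invoke the classical rigidity statement -- this is exactly the $n=2$ case of Mostow-type rigidity for the geodesic flow / boundary action -- that a $C^1$ conjugacy between boundary actions of cocompact Fuchsian groups must be M\"obius, hence $h\in\PSL(2,\R)$ (possibly after composing with an orientation-reversing reflection, but the hypothesis $h\in\Homeo$ orientation-preserving rules that out, or rather places it in $\PSL(2,\R)$ directly).

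Alternatively, and perhaps more cleanly, once differentiability with nonzero derivative is established one can pass to the associated map on geodesics and then to a map between the hyperbolic surfaces $\H^2/\rho(\Gamma_g)$ and $\H^2/\rho'(\Gamma_g)$ that is a $C^1$ (quasi-)isometry on the universal covers; the Mostow argument via the Gromov boundary, or directly via the fact that a conformal/$C^1$-regular boundary map between cocompact Fuchsian groups extends to an isometry of $\H^2$, yields $h\in\PSL(2,\R)$.

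The main obstacle I anticipate is the derivative-transport step: one needs the limit $\lim_n \bigl(\rho(\gamma_n)\bigr)'(x_0)/\bigl(\rho'(\gamma_n)\bigr)'(h(x_0))$ to exist and be finite and nonzero, and to conclude from a \emph{single} point of differentiability that $h'$ exists and is nonzero everywhere. This requires care: the chain-rule identity only immediately controls $h'$ along the forward orbit of $x_0$, and one must use density/minimality plus uniform hyperbolicity (a distortion or bounded-ratio estimate for M\"obius maps near their fixed points) to spread differentiability to all of $S^1$ and to upgrade "differentiable with nonzero derivative everywhere" to "$C^1$". Controlling the distortion uniformly over the sequence $\gamma_n$ -- so that the limiting derivative genuinely exists rather than merely having $\limsup$ and $\liminf$ bounds -- is the technical heart of the argument.
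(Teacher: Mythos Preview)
The paper does not actually prove this theorem; it is stated with a citation to Mostow \cite{Mostow} and the reader is referred to Agard's survey \cite{Agard} for further discussion. So there is no proof in the paper to compare against.

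That said, your proposal has a structural problem worth flagging. Your endgame is to promote $h$ to a $C^1$ diffeomorphism and then ``invoke the classical rigidity statement\dots that a $C^1$ conjugacy between boundary actions of cocompact Fuchsian groups must be M\"obius.'' But that classical statement is essentially the theorem you are trying to prove (indeed, it is the harder direction --- the hypothesis here is weaker than $C^1$). So Step~4 is circular as written. You also correctly identify that Steps~2--3 are where the real work lies, and as stated they are only heuristics: differentiability on a dense orbit does not by itself give differentiability everywhere, let alone continuity of the derivative.

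The standard argument bypasses both problems by using the \emph{cross-ratio} rather than trying to establish regularity of $h$ directly. Given four points $p,q,r,s \in S^1$, use the north--south dynamics of hyperbolic elements to find $\gamma_n \in \Gamma_g$ with $\rho(\gamma_n)$ pushing all four points into a small neighborhood of $x_0$. Since $\rho(\gamma_n)$ is M\"obius, it preserves the cross-ratio, so the four image points still have cross-ratio $[p,q,r,s]$. Near $x_0$, differentiability with $h'(x_0)\neq 0$ means $h$ is asymptotically affine, hence approximately cross-ratio preserving; thus $[h\rho(\gamma_n)(p),\dots] \to [p,q,r,s]$. But $h\rho(\gamma_n) = \rho'(\gamma_n) h$ and $\rho'(\gamma_n)$ is M\"obius, so the left side equals $[h(p),h(q),h(r),h(s)]$ for every $n$. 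Hence $h$ preserves all cross-ratios and is therefore in $\PSL(2,\R)$. This avoids any need to spread differentiability or bootstrap to $C^1$.
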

\noindent Further discussion can be found in the survey paper \cite{Agard}.

%---------------------------
\subsection{The Zimmer program} 
The strong rigidity theorems of Mostow, Margulis, and others place strict constraints on representations of lattices in Lie groups.  The \emph{Zimmer program} is a series of conjectures organized around the theme that actions of lattices on \emph{manifolds} should be similarly constrained.  (See Fisher's survey paper \cite{Fisher} for a nice introduction.)

The main successes of the Zimmer program, such as Zimmer's \emph{cocycle rigidity}, use ergodic theory and apply only to actions by measure-preserving homeomorphisms or diffeomorphisms.  However, there is also a small family of nice results regarding lattices acting on the circle.   We give two examples; the first a beautiful instance of the Zimmer program catchphrase ``large (rank) groups do not act on small (dimensional) manifolds", the second a classification of all actions of lattices on $S^1$.  

\begin{theorem}[Witte-Morris, \cite{Witte}]
Let $\Gamma$ be an arithmetic lattice in an algebraic semi-simple Lie group of $\Q$-rank at least 2 (for example, a finite index subgroup of $\SL(n, \Z)$, $n \geq 3$).  Then any representation $\rho: \Gamma \to \Homeo(S^1)$ has finite image.  
\end{theorem}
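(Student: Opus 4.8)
The plan is to reduce the problem, via Margulis's normal subgroup theorem, to a statement about a single well-chosen nilpotent subgroup of $\Gamma$, and then to rule that out by a dynamical argument that uses the hypothesis $\Q$-rank $\geq 2$ in an essential way.

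By Margulis's normal subgroup theorem, $\ker\rho$ is either of finite index in $\Gamma$ --- whence $\rho(\Gamma)$ is finite and we are done --- or finite and central; in the second case we may pass to $\Gamma/\ker\rho$, still an arithmetic lattice of $\Q$-rank at least $2$, and assume $\rho$ is faithful. Since such a $\Gamma$ is infinite, we now aim for a contradiction. The key structural input is that $\Q$-rank $\geq 2$ forces the existence of a proper $\Q$-parabolic subgroup whose unipotent radical is \emph{non-abelian}. Intersecting with $\Gamma$ and passing to a subgroup, we obtain a copy of a Heisenberg-type group $N = \langle u_1, u_2, z \mid z = [u_1,u_2],\ [u_1,z] = [u_2,z] = 1\rangle$ inside $\Gamma$, together with an element $t \in \Gamma$ normalizing $N$ and acting on $N/[N,N]$ with no eigenvalue equal to $1$. (For $\Q$-rank $1$ lattices such as $\SL(2,\Z)$ no such configuration exists --- the relevant unipotents are undistorted and the group is virtually free --- consistent with the fact that $\Q$-rank one lattices can and do act faithfully on $S^1$.)

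Now analyze the dynamics of $\rho(N)$ on $S^1$. As $N$ is nilpotent, hence amenable, $\rho(N)$ preserves a Borel probability measure on $S^1$; on a measure-preserving group the rotation number $\rot$ is a genuine homomorphism to $\R/\Z$, and since $z = [u_1,u_2]$ is a commutator we get $\rot(\rho(z)) = 0$, so $\rho(z)$ has a fixed point. One then propagates fixed points up the (short) lower central series of $N$: restricting the action to the closed invariant set $\fix(\rho(z))$, the quotient $N/\langle z\rangle$ acts and the same argument applies. This shows $\rho(N)$ has a finite orbit on $S^1$; the element $t$ permutes $\fix(\rho(N))$, and --- since up to finite index $\Gamma$ is generated by its unipotent elements, so the normal closure of $N$ has finite index --- a finite-index subgroup $\Gamma_0$ of $\Gamma$ has a finite orbit on $S^1$. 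Passing to the stabilizer $\Gamma_1 \leq \Gamma_0$ of a point in that orbit identifies $\Gamma_1$ with a group acting on $\R = S^1 \setminus \{\mathrm{pt}\}$, and we run the analogue on the line: here Plante's theorem says a nilpotent group acting on $\R$ with no global fixed point admits a nontrivial homomorphism to $\R$, whose kernel necessarily contains the commutator $z$, so $\rho(z)$ again has a fixed point, and propagation together with the hyperbolicity of the $t$-action forces $\rho(\Gamma_1)$ to be trivial. As $\Gamma_1$ is infinite, this contradicts faithfulness of $\rho$; hence $\rho(\Gamma)$ was finite.

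The hard part is the propagation step and, relatedly, the line case: passing from ``the central commutator $\rho(z)$ has a fixed point'' to ``$\rho(N)$, and then a whole finite-index subgroup of $\Gamma$, has a finite orbit'' requires a careful analysis of how $\fix(\rho(z))$ is arranged in $S^1$, how the hyperbolic element $t$ acts on it (this is exactly where the no-eigenvalue-$1$ condition is used), and how invariant measures, translation numbers, and the Plante homomorphism behave along the lower central series --- as well as the bookkeeping needed to move cleanly between $S^1$ and $\R$, where the natural orbit-collapsing maps need not be homeomorphisms. These are the technical contributions of Witte-Morris \cite{Witte}.
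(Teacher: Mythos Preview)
The paper does not give a proof of this theorem; it is only stated and attributed to Witte--Morris \cite{Witte} as part of a brief survey of Zimmer-program results, and the exposition moves on immediately. So there is no ``paper's own proof'' to compare against.

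That said, your sketch is in the spirit of Witte's actual argument (reduction via Margulis's normal subgroup theorem, then exploitation of the nilpotent/unipotent structure coming from $\Q$-rank $\geq 2$), and you are honest about where the real work lies. But there is a genuine gap worth flagging: the passage from ``$\rho(N)$ has a finite orbit'' to ``a finite-index subgroup $\Gamma_0$ of $\Gamma$ has a finite orbit'' does not follow from the statement that the normal closure of $N$ has finite index. Different conjugates $gNg^{-1}$ may well have different finite orbits, and there is no mechanism in your argument forcing these to be compatible. Witte's proof does not proceed this way; rather, it works with left-orderability (equivalently, faithful actions on $\R$) from the start and uses bounded generation by unipotents together with a careful inductive argument on the order structure, not an orbit-propagation argument on $S^1$. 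Your reduction to $\R$ comes too late and relies on a finite-orbit claim that has not been established.

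A smaller point: the existence of the specific configuration $(N,t)$ you describe---a Heisenberg subgroup together with a normalizing element acting with no eigenvalue $1$ on the abelianization---is plausible for many examples (and visibly present in $\SL(3,\Z)$), but asserting it for an arbitrary $\Q$-rank $\geq 2$ arithmetic lattice requires a structural argument you have not supplied.
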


\begin{theorem}[Ghys, Theorem 3.1 in \cite{Ghys invent}]  \label{ghys inv thm}
Let $\Gamma$ be an irreducible lattice in a semi-simple Lie group of real rank at least 2. (A basic example is $G = \PSL(2,\R) \times \PSL(2,\R)$.)   \emph{Up to semi-conjugacy}, any action $\rho: \Gamma \to \Homeo(S^1)$ either has finite image or is obtained by a projection of $G$ onto a $\PSL(2,\R)$ factor: 
$$\Gamma \hookrightarrow G \twoheadrightarrow \PSL(2,\R) \subset \Homeo(S^1).$$
\end{theorem}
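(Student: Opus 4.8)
The plan is to prove the theorem by splitting into two cases according to whether $\rho(\Gamma)$ preserves a Borel probability measure on $S^1$; the non-invariant case is where all the work lies, and there I would extract an equivariant boundary map and feed it into a superrigidity argument.

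\emph{The case of an invariant measure.} First I would dispose of the case in which $\rho(\Gamma)$ preserves a probability measure $\mu$ on $S^1$; this subsumes the case of a finite orbit, since any finite orbit carries an invariant measure. Fixing a basepoint $x_0$, the monotone map $h\colon x \mapsto \mu([x_0,x])$ satisfies $h(\rho(\gamma)x) = h(x) + \tau_\mu(\gamma)$ for a constant $\tau_\mu(\gamma) \in \R/\Z$, so $h$ is a semi-conjugacy from $\rho$ onto the rotation action of the image of the homomorphism $\tau_\mu\colon \Gamma \to \R/\Z$. Since $\Gamma$ is an irreducible higher-rank lattice it has finite abelianization (by Margulis's normal subgroup theorem, or via property $(T)$ when $G$ has it), so $\tau_\mu(\Gamma)$ is finite and $\rho$ has finite image up to semi-conjugacy.

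\emph{Reduction to a minimal, strongly proximal action.} Assume henceforth that no invariant probability measure exists. By the standard trichotomy for group actions on $S^1$ — a finite orbit, a minimal action, or a unique exceptional minimal Cantor set — and after collapsing the complementary intervals of the Cantor set in the third case, I may replace $\rho$ by a semi-conjugate action that is minimal. An action with no invariant measure and no finite orbit is moreover strongly proximal on $S^1$: every probability measure on $S^1$ can be pushed under the group arbitrarily close to a point mass.

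\emph{Boundary map and superrigidity.} Now I would exploit amenability of the action of $G$ on its Furstenberg boundary $B = B(G)$, which for $G = G_1 \times \cdots \times G_n$ is the product $\prod_i B(G_i)$: amenability yields a $\Gamma$-equivariant measurable map $B \to \mathrm{Prob}(S^1)$, and strong proximality forces its image to consist almost surely of Dirac masses, hence a $\Gamma$-equivariant measurable map $\varphi\colon B \to S^1$. The heart of the proof — and the step I expect to be the main obstacle — is a Margulis-style analysis of $\varphi$: using irreducibility of $\Gamma$ and the product structure of $B$, one shows $\varphi$ depends (up to a null set) on a single coordinate $B(G_i)$, and that the resulting $G_i$-equivariant structure on $S^1$ is rigid enough to force $G_i$ to be locally isomorphic to $\PSL(2,\R)$, with $B(G_i) \cong S^1$ and $\varphi$ essentially the identity. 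Equivalently one can run this through bounded cohomology: the bounded Euler class $e_b(\rho)$ lies in $H^2_b(\Gamma;\R) \cong \bigoplus_i H^2_{cb}(G_i;\R)$ (Burger--Monod), the factors of rank $\ge 2$ contribute $0$, among rank-one factors only a $\PSL(2,\R)$-factor carries a class realized by a circle action, and a Milnor--Wood bound on $\|e_b(\rho)\|$ rules out contributions from more than one factor. Finally I would promote the measurable semi-conjugacy coming from $\varphi$ to a genuine monotone semi-conjugacy — a standard maneuver for a minimal action — concluding that $\rho$ is semi-conjugate to
\[ \Gamma \hookrightarrow G \twoheadrightarrow \PSL(2,\R) \subset \Homeo(S^1), \]
which is the assertion of the theorem.
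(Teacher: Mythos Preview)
The paper does not contain a proof of this theorem; it is stated with attribution to Ghys \cite{Ghys invent} and used only to derive a corollary about full rigidity of certain lattice actions. There is therefore nothing in the paper to compare your argument against.

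That said, your outline tracks the actual strategy of Ghys's proof reasonably well: the dichotomy on the existence of an invariant probability measure, the reduction of the measure case to a homomorphism $\Gamma \to \R/\Z$ with finite image, the passage to a minimal strongly proximal action, and the construction of a Furstenberg boundary map $\varphi\colon B \to S^1$ via amenability are all standard ingredients. The step you flag as ``the main obstacle'' --- showing that $\varphi$ factors through a single $B(G_i)$ with $G_i$ locally isomorphic to $\PSL(2,\R)$ --- is indeed where the real work in \cite{Ghys invent} lies, and your sketch does not supply it. The bounded-cohomology route you mention as an alternative (via the Burger--Monod isomorphism $H^2_b(\Gamma;\R) \cong H^2_{cb}(G;\R)$) is genuinely a different and later argument; it is cleaner in some respects but postdates Ghys's original proof. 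If you intend this as a proof rather than a survey of approaches, you should commit to one route and fill in the factorization step.
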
 

Ghys' theorem does not imply Witte-Morris' result, as a representation $\rho$ that is semi-conjugate to one with finite image need only have a finite orbit, not finite image.  But the theorem does give new examples of fully rigid representations.  
\begin{corollary}
Let $\Gamma$ be an irreducible lattice in $\PSL(2,\R) \times G$, where $G$ is a semi-simple Lie group with no $\PSL(2,\R)$ factors.  Then the representation $\rho: \Gamma \twoheadrightarrow \PSL(2,\R) \subset \Homeo(S^1)$ is fully rigid in $\Hom(\Gamma, \Homeo(S^1))$.  
\end{corollary}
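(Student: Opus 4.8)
The plan is to combine Ghys' Theorem \ref{ghys inv thm} with the fact that the Euler number is a locally constant semi-conjugacy invariant. Since $G$ has no $\PSL(2,\R)$ factor, there is a single projection $p\colon\Gamma\to\PSL(2,\R)$, and $\rho=\iota\circ p$ with $\iota\colon\PSL(2,\R)\hookrightarrow\Homeo(S^1)$ the standard inclusion. By Theorem \ref{ghys inv thm}, every $\rho'\in\Hom(\Gamma,\Homeo(S^1))$ is semi-conjugate either to $\rho$ or to a representation with finite image, so $\Hom(\Gamma,\Homeo(S^1))=\mathcal S_\rho\sqcup\mathcal F$, where $\mathcal S_\rho$ is the semi-conjugacy class of $\rho$ and $\mathcal F$ the set of representations with a finite orbit; the two are disjoint because $\rho$ is minimal (its image is dense in $\PSL(2,\R)$, which acts minimally) whereas members of $\mathcal F$ are not. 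By the remark after Definition \ref{C-D def}, $\mathcal S_\rho$ is path-connected, hence contained in the connected component $C$ of $\rho$. Thus, by Definition \ref{totrig def}, it suffices to prove $C\cap\mathcal F=\emptyset$.

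To do this I would use the Euler number as a function on $\Hom(\Gamma,\Homeo(S^1))$: provisionally fixing a class $[z]\in H_2(\Gamma;\Z)$ (its existence is the one non-formal point, addressed below), set $\euler(\rho')=\langle \rho'^*\euler_b,[z]\rangle$. This is $\Z$-valued and continuous — its defining cocycle is assembled from translation numbers of lifts to $\widetilde{\Homeo}(S^1)$, which depend continuously on the representation — hence locally constant, so constant on $C$; and it is a semi-conjugacy invariant, since it depends only on the bounded Euler class. A representation with a finite orbit restricts, on a finite-index subgroup, to one with a global fixed point, which lifts to $\widetilde{\Homeo}(S^1)$ with all translation numbers zero; hence its Euler class dies on a finite-index subgroup, and its pairing with any integral $2$-cycle vanishes. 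So $\euler\equiv 0$ on $\mathcal F$. Provided $[z]$ is chosen so that $\euler(\rho)\neq 0$, no point of $\mathcal F$ lies in $C$, so $C=\mathcal S_\rho$ and $\rho$ is fully rigid.

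The step I expect to be the real content is producing such a $[z]$, i.e.\ showing $\rho^*\euler_b$ is nonzero already in ordinary $H^2(\Gamma;\R)$; unlike for surface groups there is no fundamental class to pair against, so non-vanishing is not automatic. This is where the hypotheses and bounded cohomology (kept, per this survey's conventions, in the background) enter. The bounded Euler class of $\iota$ is a nonzero element of $H^2_b(\PSL(2,\R);\R)$; it pulls back to the $\PSL(2,\R)$-summand of $H^2_b(\PSL(2,\R)\times G;\R)$, and restriction to the lattice $\Gamma$ is injective on bounded cohomology, so $\rho^*\euler_b\neq 0$ in $H^2_b(\Gamma;\R)$. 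To pass to ordinary cohomology one uses the lattice structure of $\Gamma$: for cocompact $\Gamma$ the Matsushima formula exhibits a nonzero degree-two class coming from the $\H^2$-factor (reflecting the $S^2$ in the compact dual), of which $\rho^*\euler$ is a nonzero multiple, and dually there is an integral $[z]$ detecting it; the non-cocompact case is handled by the same principle on a suitable compactification. Irreducibility is what makes $p$ essentially faithful and keeps this class from degenerating, and the absence of a $\PSL(2,\R)$ factor in $G$ is what leaves only one projection to track.

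Finally, I would note that conjugating $\rho$ by an orientation-reversing homeomorphism gives a representation with Euler number of the opposite sign, hence in a different component and fully rigid by the same argument; together with the finite-image representations (fully rigid by Example \ref{finite ex}), these exhaust the semi-conjugacy classes occurring in $\Hom(\Gamma,\Homeo(S^1))$.
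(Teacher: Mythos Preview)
The paper does not prove this corollary; it is stated immediately after Theorem~\ref{ghys inv thm} as if it were a routine consequence of Ghys' classification. Your proposal makes explicit what such a derivation actually requires, and you correctly locate the nontrivial step.

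Your strategy is sound: use Ghys' theorem to write $\Hom(\Gamma,\Homeo(S^1)) = \mathcal S_\rho \sqcup \mathcal F$, note that $\mathcal S_\rho$ is connected, and separate it from $\mathcal F$ by a locally constant invariant. You are right that this separation is the crux and does not come for free---$\mathcal F$ is a union of many semi-conjugacy classes, and there is no a priori reason $\mathcal S_\rho$ should be open. Your argument that the Euler class is torsion on $\mathcal F$ (pass to a finite-index subgroup with a global fixed point, invoke transfer) is fine, as is the local constancy of any pairing $\langle\rho^*\euler,[z]\rangle$.

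The gap you flag is genuine, and your sketch to fill it is where the proposal stops being a proof. Injectivity of restriction $H^2_b(\PSL(2,\R)\times G;\R)\to H^2_b(\Gamma;\R)$ is a serious theorem (Burger--Monod), and even granting it, you still need the comparison map $H^2_b(\Gamma;\R)\to H^2(\Gamma;\R)$ not to kill $\rho^*\euler_b$---this map can have large kernel in general, and invoking ``the Matsushima formula'' does not by itself establish that the particular class coming from the $\PSL(2,\R)$ projection survives. The non-cocompact case is dismissed in a phrase. So the outline is honest and points at the right machinery, but as written it asserts the key nonvanishing rather than proving it. Since the paper itself supplies no argument here, you have not been outdone---but you should be aware that completing this step requires input (from continuous bounded cohomology of Lie groups and the cohomology of arithmetic lattices) well beyond the tools developed in the survey.

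Your final paragraph about orientation-reversing conjugation is outside the paper's standing orientation conventions and not needed for the statement.
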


\boldhead{Thompson's group.}  
Thompson's group $G$ is the group of piecewise-affine homeomorphisms of the circle that preserve dyadic rational angles.  Brown and Geoghegan -- and others -- have suggested thinking of it as the analog of a ``higher rank lattice" subgroup for $\Homeo(S^1)$.  It is also known to be rigid, at least if one restricts to $C^2$ actions.  

\begin{theorem}[Ghys--Sergiescu \cite{GS}] 
All nontrivial homomorphisms $\phi: G \to \Diff^2(S^1)$ are semi-conjugate to the standard inclusion.  
\end{theorem}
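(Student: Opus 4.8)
The plan is to use the \emph{bounded Euler class} as the organizing invariant, together with the principle underlying Matsumoto's Theorem~\ref{mats thm} (and Ghys' work) that two actions of a group on $S^1$ are semi-conjugate if and only if they have the same bounded Euler class in $H^2_b(G;\Z)$. So the goal reduces to showing that every nontrivial $\phi\colon G\to\Diff^2(S^1)$ carries the same bounded Euler class as the standard inclusion $\iota$.

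First, since $G=T$ is simple, any nontrivial $\phi$ is injective, so we study faithful $C^2$ actions. Next I would rule out a finite orbit: a finite orbit of size $m$ has stabilizer of index $m$, and a finite-index subgroup of an infinite simple group is the whole group, so $m=1$, i.e. the orbit is a global fixed point; but then each torsion element of $T$ -- and $T$ contains elements of every finite order -- would be a finite-order orientation-preserving homeomorphism of the complementary interval fixing its endpoints, hence the identity, contradicting faithfulness. In particular $\phi$ is not semi-conjugate to an action with a global fixed point, so its bounded Euler class is nonzero.

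To pin the class down I would use two structural facts about $T$. First, $T$ is uniformly perfect, hence carries no nontrivial homogeneous quasimorphisms, so the comparison map $H^2_b(T;\R)\to H^2(T;\R)$ is injective; and since $H^2(T;\R)\cong\R$, generated by the real Euler class of $\iota$ (from the Ghys--Sergiescu computation of $H^*(T)$), we get $H^2_b(T;\R)=\R\cdot e_b(\iota)$, so $e_b(\phi)=\lambda\,e_b(\iota)$ for some $\lambda\ne 0$. Second, I would bring in the $C^2$ hypothesis to force $\lambda=\pm1$: a finite-order $C^2$ diffeomorphism of $S^1$ is smoothly conjugate to a Euclidean rotation, so one can read off the rotation numbers $\phi$ assigns to the (many) torsion elements of $T$ and check they match those of $\iota$; and Sacksteder/Duminy-type $C^2$ arguments exclude an exceptional minimal set. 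Combined with integrality and the Milnor--Wood norm bound $\|e_b(\phi)\|_\infty\le 1/2$, this should leave only $\lambda=\pm1$. Then $e_b(\phi)=e_b(\iota)$ (the value $-1$ gives the mirror action, which differs from $\iota$ by the order-two outer automorphism of $T$), and the semi-conjugacy criterion finishes the proof.

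The main obstacle is exactly that last step -- controlling $\lambda$, equivalently showing the action is neither ``Denjoy-ed with extra twisting'' nor carried on a proper invariant Cantor set with non-standard transverse combinatorics. This is where $C^2$, rather than mere $C^0$, is essential: one needs $C^2$ Denjoy/Sacksteder theory to kill exceptional minimal sets and smooth linearization of finite-order elements to nail the rotation-number data of $T$'s torsion, hence the circular-order type of a single orbit. As an alternative endgame, once minimality and the correct torsion data are in hand, one can build the conjugacy by hand: match a $\phi(T)$-orbit with the corresponding orbit of dyadic points under $\iota$ so as to respect the circular order -- using that $T$ is generated by elements with prescribed combinatorial behavior on such orbits -- and extend by continuity.
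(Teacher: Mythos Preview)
The paper does not prove this theorem; it is simply stated with attribution to Ghys--Sergiescu \cite{GS}, followed by the remark that the $C^0$ analogue is open. There is no argument in the paper to compare your proposal against.

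On the proposal itself: the bounded-cohomology strategy is a reasonable outline, and you correctly locate the real difficulty in the final step. Two points are worth sharpening. First, the semi-conjugacy criterion concerns the \emph{integral} bounded Euler class in $H^2_b(T;\Z)$, so after obtaining $e_b(\phi)=\lambda\,e_b(\iota)$ in $H^2_b(T;\R)$ you still need to lift this to an equality of integral classes; this requires $H^1(T;\R/\Z)=0$, which does hold (since $T$ is simple, hence perfect) but should be stated. Second, your step ``integrality and the Milnor--Wood bound force $\lambda=\pm1$'' is not yet an argument: for it to work you need $\|e_b(\iota)\|_\infty$ to equal the extremal value, which is a nontrivial input about $T$, not a formality. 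The torsion route is more promising, but as written it only tells you that $\phi(t)$ has the same \emph{order} as $\iota(t)$, hence rotation number some $k/n$ with $\gcd(k,n)=1$ --- not that $k$ matches. You would need to exploit relations among torsion elements in $T$ (e.g.\ products of specific rotations) to rigidify this, and then still argue that matching rotation numbers on torsion determines the full bounded class. The original Ghys--Sergiescu proof proceeds more directly and dynamically, using the explicit piecewise-linear structure of $T$ together with $C^2$ regularity (Szekeres/Kopell-type control near fixed points of elements of $F\subset T$), rather than via abstract bounded cohomology.
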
 

To the best of our knowledge, it is an open question whether this result extends to rigidity of homomorphisms $G \to \Homeo(S^1)$.

%-------------------------------------------
\section{Two perspectives on rigidity of group actions}  \label{themes sec}

Having seen some samples of rigidity of group actions, we now turn to a discussion of what phenomena (other than ``being a higher-rank lattice") can \emph{lead} to this rigidity.   We focus on two perspectives that have particularly strong ties to actions of surface groups on the circle.  

The first, \emph{hyperbolicity}, comes from classical smooth dynamics.   Although hyperbolicity is an essentially $C^1$ notion -- it is a statement about derivatives -- we also present a slightly different notion of hyperbolicity for group actions due to Sullivan that can be applied to actions by homeomorphisms.   As an illustration, we give two different proofs of structural stability for surface group actions, both with a hyperbolic flavor.  

Our second theme is \emph{geometricity}.  We motivate this with a discussion of \emph{maximal representations} of surface groups into Lie groups and generalizations to $\Homeo(S^1)$.    We argue that geometricity, rather than maximality, is the right way to think of the phenomenon underlying these rigidity results, and advertise rigidity of geometric representations as an organizing principle.

%-------------------------------
\subsection{Rigidity from hyperbolicity}   \label{hyp sec}

Broadly speaking, hyperbolic dynamics deals with differentiable dynamical systems that exhibit expanding and contracting behavior in tangent directions.  
The classic example of this is an \emph{Anosov diffeomorphism}. 

\begin{definition}[Anosov diffeomorphism]  \label{anosov def}
A diffeomorphism $f$ of a manifold $M$ is \emph{Anosov} if there exists a continuous $f$-invariant splitting of the tangent bundle 
$TM = E^u \oplus E^s$, and constants $\lambda > 1$,  $C > 0$ such that
\begin{align*}
& \| Df^n(v) \| \geq C \lambda^n \|v\| \text{ for } v \in E^u, \, n \geq 0 \text{ and} \\
& \| Df^{-n}(v) \| \geq C \lambda^n \|v\| \text{ for } v \in E^s, \,n \geq 0.  
 \end{align*}
\end{definition}

A slight variation on this is an \emph{Anosov flow}:  a flow $f_t$ on $M$ is Anosov if there is a continuous invariant splitting $TM = E^u \oplus E^s \oplus E^0$, where $E^u$ and $E^s$ are expanding and contracting as in Definition \ref{anosov def}, and $E^0$ is tangent to the flow.   

\boldhead{``Hyperbolicity implies structural stability"}
A major theme in hyperbolic dynamics is extracting rigidity from hyperbolic behavior.   A basic instance of this is the following theorem of Anosov.  

\begin{theorem}[\cite{Anosov}] \label{anosov thm1}
Anosov flows are structurally stable.  
\end{theorem}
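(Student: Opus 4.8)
The plan is to establish structural stability by constructing, for a sufficiently small $C^1$-perturbation $g_t$ of the Anosov flow $f_t$, an orbit equivalence $h$ via the standard shadowing-and-stable-manifolds machinery. First I would set up the hyperbolic-splitting hypotheses carefully: write $TM = E^u \oplus E^s \oplus E^0$ for $f_t$, and observe that the cone criterion for hyperbolicity is an \emph{open} condition in the $C^1$ topology on vector fields. Hence any $g_t$ sufficiently $C^1$-close to $f_t$ is again Anosov, with a nearby splitting $TM = \tilde E^u \oplus \tilde E^s \oplus \tilde E^0$; the invariant distributions and their integral foliations (the weak/strong stable and unstable foliations) depend continuously on the flow. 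This step is essentially the robustness of uniform hyperbolicity under perturbation, proved by the graph transform / cone-field argument.

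Next I would prove a \emph{shadowing lemma} for $g_t$: every $\epsilon$-pseudo-orbit of $g_t$ is $\delta(\epsilon)$-shadowed by a genuine $g_t$-orbit, and — because there is a flow direction with no contraction or expansion — the shadowing orbit is unique up to reparametrization in time. The proof is the usual fixed-point argument: the operator sending a candidate shadowing orbit to the one obtained by correcting along $\tilde E^s$ forward and along $\tilde E^u$ backward is a contraction on an appropriate Banach space of bounded sequences/curves, using the exponential rates $\lambda$ from Definition \ref{anosov def}. Then I would feed the $f_t$-orbits themselves into this lemma: since $g_t$ is $C^0$-close to $f_t$, each $f_t$-orbit $t \mapsto f_t(x)$ is an $\epsilon$-pseudo-orbit for $g_t$, so it is shadowed by a unique (up to time change) $g_t$-orbit through a point $h(x)$. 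This defines the candidate map $h$. One checks $h$ is continuous (from uniqueness and continuous dependence), injective (run the same argument exchanging the roles of $f_t$ and $g_t$ to get an inverse), and therefore a homeomorphism of the compact manifold $M$; by construction it carries $f_t$-orbits to $g_t$-orbits, i.e. it is a topological equivalence.

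I expect the main obstacle to be handling the flow (neutral) direction correctly — this is exactly what distinguishes the flow case from the simpler Anosov \emph{diffeomorphism} case, where one gets an honest conjugacy. Because $E^0$ is neither contracted nor expanded, one cannot ask the shadowing orbit to track the original orbit at matched times; one only gets agreement after an a priori uncontrolled time reparametrization, so the fixed-point space must be taken modulo reparametrization (or one fixes a gauge, e.g. by sliding along the flow to make the error orthogonal to $E^0$). Making this quotient construction rigorous, and verifying that the resulting reparametrization is continuous in $x$, is the delicate point; everything else is a routine application of the contraction principle with the hyperbolic rates. A cleaner packaging, which I would mention as an alternative, is to phrase the whole argument as the (transverse) hyperbolicity of the orbit foliation and invoke a structural-stability statement for the time-one map restricted to local transversals, but the shadowing proof above is the most self-contained route.
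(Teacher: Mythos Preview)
The paper does not give a proof of Theorem \ref{anosov thm1} at all: it is stated with a citation to \cite{Anosov} and used as a black box. So there is no ``paper's own proof'' to compare your proposal against. The only related argument the paper sketches is the proof of Corollary \ref{anosov cor}, which \emph{assumes} structural stability of the geodesic flow (Theorem \ref{anosov thm}, again cited from \cite{Anosov}) and deduces structural stability of the holonomy representation from it.

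That said, your outline is a correct and standard route to the result: openness of the Anosov condition via cone fields, then shadowing to produce the orbit equivalence, with the caveat you correctly flag about handling the center direction and getting only an orbit equivalence up to time reparametrization rather than a conjugacy. If you were asked to supply a proof for a survey like this one, the appropriate response would be exactly what the paper does --- cite \cite{Anosov} (or a modern textbook treatment such as \cite{Hasselblatt}) rather than reproduce the argument, since the proof is classical, lengthy to make rigorous, and orthogonal to the paper's focus on rotation numbers and group actions.
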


Theorem \ref{anosov thm1} has been generalized to many kinds of diffeomorphisms and flows with weaker forms of hyperbolic behavior than the Anosov condition, such as \emph{partial hyperbolicity}.  This is still an active area of research -- the survey paper \cite{BPSW} provides a nice exposition of recent results in the partially hyperbolic case.  We also recommend \cite{Hasselblatt} for a general introduction to hyperbolic dynamics.  

Here we will focus on a very particular case of Anosov's original structural stability theorem, one that applies (of course!) to certain circle bundles over surfaces.  

\begin{theorem}[Anosov \cite{Anosov}] \label{anosov thm}
Let $\Sigma_g$ be a surface with hyperbolic metric, and $E$ the unit tangent bundle of $\Sigma_g$.  Then the geodesic flow on $E$ is structurally stable.  
\end{theorem}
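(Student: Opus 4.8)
The plan is to deduce Theorem \ref{anosov thm} from Theorem \ref{anosov thm1}: since every Anosov flow is structurally stable, it suffices to prove that the geodesic flow $\phi_t$ on $E = T^1\Sigma_g$ is an Anosov flow. Because $\Sigma_g$ carries a \emph{hyperbolic} (constant curvature $-1$) metric, this can be done by a clean homogeneous-space computation rather than by the general negatively-curved argument.

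First I would pass to the homogeneous model. The group $\PSL(2,\R)$ acts simply transitively on the unit tangent bundle $T^1\H^2$, so fixing a base point and frame gives an identification $T^1\H^2 \cong \PSL(2,\R)$; if $\Gamma \subset \PSL(2,\R)$ is the Fuchsian group uniformizing the given hyperbolic structure (so $\Sigma_g = \Gamma\backslash\H^2$), then $E \cong \Gamma\backslash\PSL(2,\R)$. Under this identification the geodesic flow $\phi_t$ is right translation by the diagonal one-parameter subgroup $g_t = \mathrm{diag}(e^{t/2},e^{-t/2})$ (the precise speed normalization to be pinned down in the write-up), which descends to the quotient because right and left translations commute; the stable and unstable horocycle flows correspond to the two strictly-triangular unipotent subgroups.

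Next I would verify the Anosov property directly. Trivialize $TE$ by left-invariant vector fields, so that $T_pE \cong \mathfrak{sl}(2,\R)$ for every $p$; a short computation with $\phi_t(\Gamma h) = \Gamma h g_t$ shows that in this trivialization $D\phi_t$ is the linear map $\mathrm{Ad}(g_{-t})$. Decompose $\mathfrak{sl}(2,\R) = \R X \oplus \R U^+ \oplus \R U^-$, with $X$ the diagonal generator ($\exp(tX) = g_t$) and $U^{\pm}$ the triangular nilpotents; the brackets $[X,U^+] = U^+$ and $[X,U^-] = -U^-$ give $\mathrm{Ad}(g_{-t})U^{\pm} = e^{\mp t}U^{\pm}$. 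Hence the continuous (indeed smooth, left-invariant) splitting $TE = E^u \oplus E^0 \oplus E^s$ with $E^u = \R U^-$, $E^0 = \R X$ the flow direction, and $E^s = \R U^+$ is $\phi_t$-invariant, and with respect to any left-invariant Riemannian metric on $\PSL(2,\R)$ (which descends to $E$) one has $\|D\phi_t v\| = e^{-t}\|v\|$ for $v \in E^s$ and $\|D\phi_{-t}v\| = e^{-t}\|v\|$ for $v \in E^u$, $t\ge 0$: exactly the uniform exponential contraction and expansion of the Anosov condition of Definition \ref{anosov def} (flow version), with $C = 1$ and $\lambda = e$. So $\phi_t$ is Anosov and Theorem \ref{anosov thm1} applies. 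For readers who prefer hyperbolic geometry to Lie theory, I would also sketch the parallel argument via Jacobi fields: along a unit-speed geodesic a normal Jacobi field satisfies $J'' + KJ = 0$ with $K \equiv -1$, so $J'' = J$ and the solution space is spanned by the decaying and growing fields $e^{-t}$ and $e^{t}$; identifying $D\phi_t$ with $(J(0),J'(0)) \mapsto (J(t),J'(t))$ on the normal bundle exhibits the same stable and unstable line fields with the same estimates.

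The only part requiring real care — though it is bookkeeping rather than difficulty — is making the identifications of the second and third paragraphs mutually consistent: the isomorphism $T^1\H^2 \cong \PSL(2,\R)$, the matching of the geodesic flow with a specific diagonal subgroup at the correct speed, the resulting formula $D\phi_t = \mathrm{Ad}(g_{-t})$ in the left-invariant trivialization (getting the signs and the side of the translation right), and checking that a left-invariant metric turns the eigenvalue relations $\mathrm{Ad}(g_{-t})U^{\pm} = e^{\mp t}U^{\pm}$ into honest norm estimates of the form demanded by Definition \ref{anosov def}.
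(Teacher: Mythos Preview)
The paper does not give its own proof of this theorem: it is stated as a classical result of Anosov and cited without argument, exactly as Theorem~\ref{anosov thm1} is. There is therefore nothing in the paper to compare your proposal against.

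That said, your proposal is correct and is the standard way to justify the statement: reduce to Theorem~\ref{anosov thm1} by exhibiting the geodesic flow as an Anosov flow, using the homogeneous model $E \cong \Gamma\backslash\PSL(2,\R)$ with the diagonal subgroup generating the flow and the unipotent directions giving $E^s$ and $E^u$. The computation $D\phi_t = \mathrm{Ad}(g_{-t})$ in the left-invariant trivialization and the resulting exponential rates are right, and the Jacobi-field alternative you sketch is equally valid. Your caveat about bookkeeping (signs, sides, speed normalization) is well placed but, as you say, routine.
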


The unit tangent bundle $E$ has the structure of a flat circle bundle over $\Sigma_g$, its holonomy is the representation $\Gamma_g \to \PSL(2,\R) \subset \Homeo(S^1)$ defining the hyperbolic structure on the surface.   What is more, the foliation transverse to the fibers also contains information about the geodesic flow: its tangent space is spanned by the flow direction $E^0$ together with the expanding direction $E^u$.   

As a consequence, one can derive a structural stability result for these representations.  

\begin{corollary}[Structural stability of group actions]  \label{anosov cor}
Let $\rho: \Gamma_g \to \Diff^\infty(S^1)$ be faithful with image a discrete subgroup of $\PSL(2,\R)$.  Then $\rho$ is structurally stable in the sense of Definition \ref{struc def}. 
\end{corollary}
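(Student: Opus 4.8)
The plan is to convert a small perturbation of $\rho$ into a small perturbation of the geodesic flow $\phi_t$ on $E = T^1\Sigma_g$, apply Anosov's structural stability theorem (Theorem~\ref{anosov thm}) to that flow, and then recover a conjugacy on $S^1$ from the resulting orbit equivalence by passing to the developed space of leaves.

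\emph{Setup and dictionary.} Recall that $E$ is the flat $S^1$-bundle with holonomy $\rho$, and that the foliation $\mathcal F_\rho$ transverse to its fibers has tangent plane field $E^0 \oplus E^u$; thus $\mathcal F_\rho$ coincides with the weak-unstable foliation $W^u$ of $\phi_t$. Moreover, by the very construction of a flat bundle, the lifted foliation $\widetilde{\mathcal F_\rho}$ on the universal cover is the trivial product foliation $\widetilde\Sigma_g \times \{\mathrm{pt}\}$, so its leaf space is a circle on which $\Gamma_g$ acts precisely by $\rho$; the same holds for any $\rho'$, with $\rho$ replaced by $\rho'$.

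\emph{Main steps.} First, observe that for $\rho'$ sufficiently $C^1$-close to $\rho$ the bundle $E_{\rho'}$ is topologically identified with $E$ by a homeomorphism close to the identity, under which the horizontal foliation $\mathcal F_{\rho'}$ becomes a small perturbation of $W^u$. Next I would smooth $\mathcal F_{\rho'}$ to a nearby $C^\infty$ foliation and pick a $C^\infty$ vector field tangent to it and $C^1$-close to a generator of $\phi_t$; the flow $\psi_t$ it generates is a $C^1$-small perturbation of $\phi_t$, and since $W^u(\psi_t)$ is the unique $\psi_t$-invariant $2$-dimensional foliation with tangent plane field near $E^0 \oplus E^u$, it must equal the smoothed $\mathcal F_{\rho'}$. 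Then Theorem~\ref{anosov thm} furnishes a homeomorphism $H$ of $E$, $C^0$-close to the identity, taking $\psi_t$-orbits to $\phi_t$-orbits and hence $W^u(\psi_t)$ to $W^u(\phi_t)$; composing with the small smoothing homeomorphism yields a homeomorphism of $E$, close to (hence homotopic to) the identity, carrying $\mathcal F_{\rho'}$ to $\mathcal F_\rho$. Finally, lifting this homeomorphism to universal covers gives a $\Gamma_g$-equivariant homeomorphism taking $\widetilde{\mathcal F_{\rho'}}$ to $\widetilde{\mathcal F_\rho}$, and the induced map $h$ on leaf spaces is a homeomorphism of $S^1$ with $h\,\rho'(\gamma)\,h^{-1} = \rho(\gamma)$ for all $\gamma \in \Gamma_g$; a homeomorphism close to the identity induces the trivial outer automorphism of $\Gamma_g$, so no automorphism is secretly absorbed, and $h$ is exactly the conjugacy required by Definition~\ref{struc def}.

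\emph{The main obstacle.} The bulk of this is bookkeeping with the flat-bundle and developing-map dictionary. The one genuinely delicate point is the middle step: the plane field tangent to $\mathcal F_{\rho'}$ is only as regular, transversally, as $\rho'$ itself — that is, $C^1$, hence merely continuous once differentiated — whereas Anosov's theorem is stated for $C^1$-small perturbations of the geodesic \emph{vector field}. Bridging this gap, by smoothing the horizontal foliation while keeping it $C^1$-close to $W^u$ and producing a genuine nearby flow whose weak-unstable foliation recovers $\mathcal F_{\rho'}$ up to a small homeomorphism, is where the real work lies; I expect that, modulo this technical point, the rest follows as sketched.
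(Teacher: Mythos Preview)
Your overall strategy matches the paper's: build a flow on $E$ that is a $C^1$-small perturbation of geodesic flow and whose weak-unstable foliation is $\mathcal F_{\rho'}$, apply Anosov's theorem, and read off the conjugacy from the induced map on leaf spaces. The difference is precisely in the step you flag as the obstacle.

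Rather than smoothing $\mathcal F_{\rho'}$ and then choosing an arbitrary tangent vector field, the paper constructs the perturbed flow by intersecting $\mathcal F_{\rho'}$ with the weak \emph{stable} foliation $W^s$ of the original geodesic flow. Since $W^s$ is smooth (the geodesic flow of a hyperbolic surface has smooth weak foliations) and transverse to $\mathcal F_{\rho'}$, this intersection is a one-dimensional foliation of whatever regularity $\mathcal F_{\rho'}$ has, and one parametrizes it directly as a flow $g_t$ that is $C^1$-close to $\phi_t$. Now $\mathcal F_{\rho'}$ is $g_t$-invariant by construction and has tangent plane field near $E^0\oplus E^u$, so by exactly the uniqueness argument you invoke it must be $W^u(g_t)$. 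No smoothing, no auxiliary homeomorphism to undo the smoothing, and the argument runs at the native regularity of $\rho'$; Anosov's structural stability already applies to $C^1$ perturbations, so there is no need to upgrade to $C^\infty$. Your elaboration of the final step (lifting to universal covers and passing to leaf spaces) is a correct unpacking of what the paper states in one sentence.
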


The proof is not difficult conceptually, but it requires that you know a little more about Anosov flows than we have introduced here.  We give a quick sketch for the benefit of those readers who have the background.  More details can be found in \cite{Ghys AIF}.  

\begin{proof}[Proof sketch, following \cite{Ghys AIF}]
Let $f_t$ denote geodesic flow on the unit tangent bundle $E$ of $\Sigma_g$.  
Let $\rho': \Gamma_g \to \Diff^\infty(S^1)$ be a $C^1$ perturbation of $\rho$, and $E'$ the resulting flat bundle.  This is equivalent to $E$ as a smooth bundle, so we can think of it as the space $E$ with a foliation $\F'$ transverse to the fibers.  As $\rho'$ is $C^1$-close to $\rho$, the foliation $\F'$ is $C^1$ close to the transverse foliation for $\rho$, which is the \emph{weak unstable} foliation $E^0 \oplus E^u$ of $f_t$. 

As a consequence, the intersection of $\F'$ with the weak stable foliation of $f_t$ is a 1-dimensional foliation.  This foliation can be parametrized by a flow $g_t$ that is $C^1$-close to $f_t$.  Moreover, the weak unstable of $g_t$ is exactly $\F'$ -- although strong stable and unstable foliations are not invariant under reparametrization, the weak foliations are.  
Theorem \ref{anosov thm} now implies that $g_t$ and $f_t$ are topologically conjugate.  As such a topological conjugacy maps the weak unstable foliation of $f_t$ to that of $g_t$, 
it defines a conjugacy of $\rho$ and $\rho'$.
\end{proof}

\boldhead{Ghys' differentiable rigidity.}
Building on Corollary \ref{anosov cor}, Ghys proved a remarkable differentiable rigidity result for surface group actions.  

\begin{theorem}[Local differentiable rigidity \cite{Ghys AIF}]  \label{ghys loc thm}
Let $\rho: \Gamma_g \to \Diff^\infty(S^1)$ be smoothly conjugate to a discrete, faithful representation into $\PSL(2,\R)$.   Then any $C^1$ perturbation of  $\rho$ is $C^\infty$ conjugate to a discrete, faithful representation into $\PSL(2,\R)$ .    
\end{theorem}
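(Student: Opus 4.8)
The strategy is to reduce to Ghys' rigidity theory for Anosov flows whose weak invariant foliations are smooth, building directly on the mechanism behind Corollary~\ref{anosov cor}. Since being smoothly conjugate to a Fuchsian representation is preserved under smooth changes of coordinates, I may assume that $\rho$ itself is a discrete, faithful representation $\Gamma_g \to \PSL(2,\R) \subset \Diff^\infty(S^1)$, and let $f_t$ denote the geodesic flow on $E = T^1\Sigma_g$, whose transverse (weak unstable) foliation $W^{wu}$ has holonomy $\rho$. Let $\rho'$ be a $C^1$ perturbation of $\rho$. By Corollary~\ref{anosov cor}, $\rho'$ is $C^0$-conjugate to $\rho$; in particular its action on $S^1$ is minimal and faithful, and the flat bundle $E'$ associated to $\rho'$ has the same Euler number as $E$, hence is smoothly isomorphic to $E$. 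We may therefore view $E'$ as $E$ equipped with a smooth foliation $\mathcal{F}'$ transverse to the fibers, with $\mathcal{F}'$ being $C^1$-close to $W^{wu}$.

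Exactly as in the proof of Corollary~\ref{anosov cor}, the $1$-dimensional foliation obtained by intersecting $\mathcal{F}'$ with the weak stable foliation $W^{ws}$ of $f_t$ is parametrized by a flow $g_t$ on $E$ which is $C^1$-close to $f_t$, hence Anosov; its weak unstable foliation is $\mathcal{F}'$ and (one checks) its weak stable foliation is $W^{ws}$. The crucial point is now a gain of regularity over the generic perturbation: $\mathcal{F}'$ is the horizontal foliation of a flat bundle built from a \emph{smooth} action, hence is a $C^\infty$ foliation, and $W^{ws}$, being the weak stable foliation of the geodesic flow of a constant-curvature surface, is also $C^\infty$. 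Thus $g_t$ is an Anosov flow on $T^1\Sigma_g$ \emph{both of whose weak invariant foliations are $C^\infty$}.

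I would then invoke Ghys' classification of Anosov flows on closed $3$-manifolds with smooth weak stable foliation: up to finite covers, such a flow is $C^\infty$ orbit equivalent either to a suspension of a hyperbolic automorphism of the torus or to the geodesic flow of a hyperbolic surface. The first case is excluded by the topology of $T^1\Sigma_g$, and no finite cover is needed, so $g_t$ is $C^\infty$ orbit equivalent to the geodesic flow $h_t$ of some hyperbolic metric on $\Sigma_g$. Any orbit equivalence of Anosov flows carries weak unstable foliation to weak unstable foliation, so this $C^\infty$ diffeomorphism of $E$ sends $\mathcal{F}'$ to the horizontal foliation of the flat $\PSL(2,\R)$-bundle determined by $h_t$; read through the two bundle structures, it yields a $C^\infty$ conjugacy between $\rho'$ and the corresponding discrete, faithful representation $\Gamma_g \to \PSL(2,\R)$, which is the desired conclusion.

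The main obstacle is the passage through Anosov flows. First, one must pin down the regularity of the invariant foliations of $g_t$: the weak unstable foliation $\mathcal{F}'$ is manifestly $C^\infty$, but some care is required to confirm that the weak stable foliation of $g_t$ really is $W^{ws}$ (and so is smooth), since a generic $C^1$ perturbation of $f_t$ has only Hölder weak foliations. Second, one must promote Ghys' \emph{orbit equivalence} -- a priori merely a diffeomorphism of the ambient $3$-manifold which need not respect the fibration fiberwise -- to a genuine $C^\infty$ conjugacy of the two circle actions, using its compatibility with the weak unstable foliations together with the holonomy description of $\rho'$ and of the target representation. The details are carried out in \cite{Ghys AIF}.
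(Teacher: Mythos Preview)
The paper does not give a full proof of this theorem; it only describes Ghys' approach in one sentence: improve the topological conjugacy from Corollary~\ref{anosov cor} to a smooth one by \emph{finding a $\rho'$-invariant projective structure on $S^1$}. Your strategy is genuinely different: rather than constructing a projective structure directly on the circle, you stay on the $3$-manifold $T^1\Sigma_g$, observe that the perturbed flow $g_t$ has \emph{both} weak foliations smooth, and invoke Ghys' earlier classification of such Anosov flows (his 1987 Annales de l'ENS paper, not \cite{Ghys AIF}) to obtain a smooth orbit equivalence with a geodesic flow.

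Your outline is sound. The point you flag about the weak stable foliation of $g_t$ being $W^{ws}$ does go through: $W^{ws}$ is $g_t$-invariant and contains the orbits, so its tangent plane field is a $Dg_t$-invariant $2$-plane containing $E^0_g$; the hyperbolic splitting forces this to be $E^0_g \oplus E^s_g$ or $E^0_g \oplus E^u_g$, and $C^0$-proximity to $W^{ws}_f$ selects the former. The more substantial issue is your second obstacle, passing from a smooth orbit equivalence $\Phi$ to a conjugacy of circle actions. This can be done --- lift $\Phi$ to the universal cover and read it on the leaf spaces of the two weak-unstable foliations (each identified with a fiber $S^1$); one gets a smooth circle map intertwining $\rho'$ with $\rho_0 \circ \phi_*$, where $\phi_*$ is the automorphism of $\Gamma_g$ induced by $\Phi$. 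But this argument is not what \cite{Ghys AIF} does, so your closing sentence ``the details are carried out in \cite{Ghys AIF}'' is misleading.

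What each approach buys: the projective-structure route stays on the circle and produces the smooth conjugacy directly, without the detour through $3$-dimensional orbit equivalences and leaf spaces; it is also the mechanism that generalizes to the global result (Theorem~\ref{ghys glob rig}), where no nearby Anosov flow is available. Your route has the appeal of reducing everything to a single powerful black box, at the cost of importing that classification and then translating back from flows to holonomies.
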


\noindent This is not quite local differentiable rigidity (in the sense of Definition \ref{rigdef2}), as $\rho'$ is not necessarily $C^\infty$ conjugate to $\rho$.  However, in light of Theorem \ref{mostow circle} Ghys' result is the best one can hope for.  

Though quite technically involved, the idea of Ghys' proof is to improve the topological conjugacy given by Corollary \ref{anosov cor} to a smooth conjugacy by finding a $\rho'$-invariant  projective structure on $S^1$.
Ghys later improved Theorem \ref{ghys loc thm} to a global differentiable rigidity result.    

\begin{theorem}[Global differentiable rigidity \cite{Ghys IHES}]   \label{ghys glob rig}
Let $r \geq 3$, and let $\rho: \Gamma_g \to \Diff^r(S^1)$ be a representation with Euler number $\pm \chi(\Sigma_g)$.  Then $\rho$ is faithful, and conjugate by a $C^r$ diffeomorphism to a representation into a discrete subgroup of $\PSL(2,\R)$.  
\end{theorem}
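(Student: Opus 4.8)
The plan is to reduce the statement, in three stages, to something we can already handle: first produce a semi-conjugacy with a Fuchsian representation, then promote it to a topological conjugacy, and finally upgrade that to a $C^r$ conjugacy by constructing a $\rho$-invariant projective structure on $S^1$ (the smoothing idea behind Theorem \ref{ghys loc thm}).

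\textbf{Step 1: semi-conjugacy to a Fuchsian representation.} By the Milnor--Wood inequality (Theorem \ref{MW thm}), $\pm\chi(\Sigma_g)$ is the extremal value of the Euler number, and one shows that a representation attaining it has the same bounded Euler class as some discrete, faithful $\rho_0\colon\Gamma_g\to\PSL(2,\R)$. Matsumoto's theorem (Theorem \ref{mats thm}) then supplies a monotone degree-one map $h_0\colon S^1\to S^1$ with $h_0\circ\rho(\gamma)=\rho_0(\gamma)\circ h_0$ for all $\gamma$; since $\rho_0$ is faithful and the complement of the image of $h_0$ is a countable union of collapsed intervals, any $\gamma$ with $\rho(\gamma)=\id$ would satisfy $\rho_0(\gamma)=\id$, so $\rho$ is faithful.

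\textbf{Step 2: upgrading to a topological conjugacy.} I claim $\rho$ is minimal, so that $h_0$ is a homeomorphism. There is no finite orbit: a finite orbit would give a finite-index subgroup with a global fixed point, forcing the nonzero number $\pm\chi(\Sigma_g)$ to vanish on that subgroup. Hence $\rho$ has a unique minimal set, either all of $S^1$ or an exceptional (Cantor) minimal set $K$, and in the latter case $h_0$ collapses exactly the complementary gaps of $K$. One excludes the exceptional case using the $C^2$ regularity: analyzing the $\rho$-orbits of the gaps of $K$ with bounded-distortion estimates in the spirit of Schwartz and Sacksteder, together with the fact that the Fuchsian model $\rho_0$ has no parabolic or elliptic elements (so each gap has trivial or infinite-cyclic stabilizer), leads to a contradiction. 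Thus $h_0$ conjugates $\rho$ topologically to $\rho_0$; this is the global counterpart of what Corollary \ref{anosov cor} gives for $C^1$-small perturbations.

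\textbf{Step 3: from topological to $C^r$ conjugacy (the main obstacle).} Following \cite{Ghys IHES}, one now builds a $\rho$-invariant projective structure on $S^1$. The obstruction is the Schwarzian cocycle $\gamma\mapsto S(\rho(\gamma))$, where $S(f)=\left(\tfrac{f''}{f'}\right)'-\tfrac12\left(\tfrac{f''}{f'}\right)^2$; this is well defined and continuous precisely because $r\geq 3$, and by the identity $S(f\circ g)=(S(f)\circ g)\,(g')^2+S(g)$ it is a $1$-cocycle valued in the $\Gamma_g$-module of quadratic differentials on $S^1$. An invariant projective structure is exactly a solution of $S(\rho(\gamma))=q-\rho(\gamma)^{*}q$ for some quadratic differential $q$. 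Solving this cohomological equation is the technical heart of the argument: one exploits the strong contraction of the dynamics (the topological conjugacy to a Fuchsian group, equivalently a Liv\v{s}ic-type vanishing over the Anosov geodesic flow on the unit tangent bundle) to make an averaging/series expression for $q$ converge, with $q$ inheriting enough regularity from $\rho$. The associated developing map $D\colon\R\to\mathbb{RP}^1$ is then equivariant for $\rho$ and its projective holonomy $\bar\rho\colon\Gamma_g\to\PSL(2,\R)$; minimality of $\rho$ forces $D$ to descend to a homeomorphism conjugating $\rho$ to $\bar\rho$, so $\bar\rho$ is minimal and hence a discrete, faithful representation, and a final regularity bootstrap (again using the hyperbolic dynamics, in the spirit of Theorem \ref{mostow circle}) promotes this conjugacy to a $C^r$ diffeomorphism. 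The delicate points are the exclusion of an exceptional minimal set in Step 2 (it genuinely needs $C^2$, since free-group actions do admit smooth exceptional minimal sets) and, above all, the solution of the Schwarzian cohomological equation in Step 3.
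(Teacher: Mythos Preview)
Your proposal is correct and follows the same architecture the paper sketches: Matsumoto's theorem for the semi-conjugacy, an argument ruling out an exceptional minimal set to promote this to a genuine conjugacy, and then Ghys' invariant projective structure via the Schwarzian cocycle for the $C^r$ upgrade. The paper itself gives only a two-sentence outline (``Matsumoto's theorem provides a topological conjugacy \ldots\ Ghys again finds an invariant projective structure''), so your write-up is in fact a considerably more detailed version of the same strategy.

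One small point of comparison: for Step~2, the paper (in the proof sketch in Section~\ref{pf sec}) points to \cite{Ghys minimal} and Duminy's theorem on ends of leaves to rule out an exceptional minimal set, whereas you invoke Schwartz--Sacksteder bounded-distortion estimates. Both routes work here; the Duminy/Ghys argument is phrased in terms of the associated foliated bundle and the growth of ends of semi-proper leaves, while your Sacksteder-style approach stays on the circle. Either way, the $C^2$ hypothesis is essential, as you correctly note.
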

The essential ``global" ingredient is a theorem of Matsumoto on \emph{maximal representations}, which we'll introduce in the next subsection.   Matsumoto's theorem provides a topological conjugacy between $\rho$ and a representation into $\PSL(2,\R)$; Ghys again finds an invariant projective structure that gives the $C^r$ conjugacy.

Corollary \ref{anosov cor} and Ghys' local differentiable rigidity theorem also apply to the representations to $\PSLk$ constructed in Example \ref{pslk ex}, as the $k$-fold fiberwise cover of the unit tangent bundle of $\Sigma_g$ also admits an Anosov flow with weak-unstable foliation transverse to the fibers.   
In \cite{Bowden}, Bowden uses this to prove a global differentiable rigidity result.  

\begin{theorem}[\cite{Bowden}] \label{bowden thm}
Let $\rho_t: \Gamma_g \to \Diff^\infty(S^1)$ be a continuous path of representations, with $\rho_0$ a discrete, faithful representation to $\PSLk$.    Then $\rho_t$ is smoothly conjugate to $\rho_0$.  
\end{theorem}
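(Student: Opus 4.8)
The plan is to deduce the statement from the structural stability of the Anosov geodesic flow, globalizing along the path by an open-closed argument and upgrading the resulting topological conjugacies to smooth ones by Ghys' method of invariant projective structures. Write the path on $[0,1]$ and set $A=\{t\in[0,1] : \rho_t \text{ is } C^\infty\text{-conjugate to }\rho_0\}$; since $0\in A$ and $[0,1]$ is connected, it is enough to show $A$ is open and closed. First I would recast the problem geometrically: each $\rho_t$ is the holonomy of a smooth flat $S^1$--bundle $E_t\to\Sigma_g$, and since the Euler number varies continuously and is integer-valued it is constant along the path, so every $E_t$ is smoothly isomorphic to $E_0$, the $k$-fold fiberwise cover of the unit tangent bundle of $\Sigma_g$. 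Choosing such isomorphisms (continuously in $t$) presents the path as a family of foliations $\F_t$ on the fixed manifold $E_0$, transverse to the fibers, with $\F_0$ the weak-unstable foliation $E^0\oplus E^u$ of the lifted geodesic flow $\Phi$, which is Anosov.

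For openness, suppose $t_0\in A$; conjugating the whole path by the smooth conjugacy I may assume $\rho_{t_0}=\rho_0$, so $\F_{t_0}=\F_0$. For $t$ near $t_0$, continuity of the path in the $C^1$ topology makes $\F_t$ a $C^1$-small perturbation of $\F_0$, and, exactly as in the proof of Corollary~\ref{anosov cor}, the intersection of $\F_t$ with the weak-stable foliation of $\Phi$ is a one-dimensional foliation parametrized by an Anosov flow that is $C^1$-close to $\Phi$ and whose weak-unstable foliation is again $\F_t$. Since $\Phi$ is Anosov it is structurally stable (Theorem~\ref{anosov thm1}), so this flow is topologically conjugate to $\Phi$; the conjugacy carries $\F_t$ to $\F_0$, which (as in the proof of Corollary~\ref{anosov cor}) gives a $C^0$ conjugacy of $\rho_t$ with $\rho_0$. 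Next, by the $\PSL^{(k)}$-analogue of Ghys' local differentiable rigidity (Theorem~\ref{ghys loc thm}) the perturbation $\rho_t$ is $C^\infty$-conjugate to a discrete, faithful representation into $\PSL^{(k)}$; comparing this target with the $C^0$ conjugacy to $\rho_0$ just obtained, and using that such a smooth conjugacy is determined up to the finite centralizer of the action, one identifies the target with $\rho_0$ itself. Hence $t\in A$.

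For closedness, suppose $t_n\to t_\infty$ with $t_n\in A$, say $\rho_{t_n}=h_n\rho_0 h_n^{-1}$ with $h_n\in\Diff^\infty(S^1)$. A discrete, faithful $\PSL^{(k)}$-action on $S^1$ is minimal with finite centralizer (the central $\Z/k$), so the $h_n$ are essentially unique, and the real point is a uniform $C^\infty$ bound on them. Here Ghys' construction enters again: $h_n$ is essentially the developing map of the $\rho_{t_n}$-invariant projective structure on $S^1$, which depends continuously on $\rho_{t_n}$, so the $h_n$ lie in a $C^\infty$-bounded family. An Arzel\`{a}--Ascoli argument then extracts a $C^\infty$ limit $h_\infty$ with $\rho_{t_\infty}=h_\infty\rho_0 h_\infty^{-1}$, so $t_\infty\in A$ and $A=[0,1]$.

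The open-closed skeleton, together with the structural stability of $\Phi$, is soft; the real work lies in the two places where smoothness enters — turning the topological conjugacy supplied by Theorem~\ref{anosov thm1} into a $C^\infty$ one by producing a $\rho_t$-invariant projective structure and showing its developing map is a diffeomorphism (Ghys' bootstrapping), and establishing the uniform a priori estimates needed for the closedness step. I expect the genuine obstacle to be controlling the $t$-dependence of this invariant projective structure, and hence of the conjugating diffeomorphism: one must rule out any drift of the ambient $\PSL^{(k)}$-conjugacy class along the path, so that the conjugacy can really be taken back to $\rho_0$. This continuity analysis, rather than the topological skeleton, is where the work of \cite{Bowden} lies.
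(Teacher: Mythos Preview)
There is a genuine gap, and it stems from the statement itself being imprecise: the conclusion ``$\rho_t$ is smoothly conjugate to $\rho_0$'' cannot hold as written. Take any nontrivial path inside Teichm\"uller space, viewed as sitting in $\Hom(\Gamma_g,\PSLk)\subset\Hom(\Gamma_g,\Diff^\infty(S^1))$; the endpoints $\rho_0,\rho_1$ are both discrete faithful in $\PSLk$, but by Theorem~\ref{mostow circle} they are \emph{not} $C^1$-conjugate unless they are already $\PSL(2,\R)$-conjugate. So the ``drift of the ambient $\PSLk$-conjugacy class'' you flag in your last paragraph is real and cannot be ruled out --- and this is exactly where your argument breaks. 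The sentence ``comparing this target with the $C^0$ conjugacy to $\rho_0$ \ldots\ one identifies the target with $\rho_0$ itself'' is false: two $C^0$-conjugate discrete faithful representations into $\PSLk$ need not be smoothly conjugate. The correct conclusion (cf.\ the paper's own precise version in Section~\ref{pf sec}) is that $\rho_t$ is smoothly conjugate to \emph{some} discrete faithful representation into $\PSLk$, varying with $t$.

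As for the paper's treatment: it does not give a proof, only the one-line summary that Bowden's work directly produces a topological conjugacy between $\rho_t$ and $\rho_0$ for every $t$ along the path, after which Ghys' arguments from \cite{Ghys IHES} yield a smooth conjugacy into $\PSLk$ (not, as the sentence might suggest, a smooth conjugacy to $\rho_0$). Bowden's input is thus a global $C^0$ result obtained by contact-topological methods, not an open--closed bootstrap. Your open--closed scheme is a reasonable alternative route to the \emph{corrected} statement: openness is immediate from the $\PSLk$-version of Theorem~\ref{ghys loc thm}, and closedness can be salvaged (though not by your Arzel\`a--Ascoli sketch as written) by observing that derivatives at fixed points --- i.e.\ eigenvalues of the hyperbolic elements --- are $C^1$-conjugacy invariants, so $C^1$-convergence of $\rho_{t_n}$ forces the targets $\sigma_n\in\PSLk$ to remain in a compact subset of the Teichm\"uller component.
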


Bowden's work produces a topological conjugacy between $\rho_0$ and $\rho_t$, which by Ghys' arguments in \cite{Ghys IHES} must be a smooth conjugacy.   Using our Theorem \ref{invent thm} instead of Bowden's results, one can make the same statement as Theorem \ref{bowden thm} not just for deformations along paths, but for arbitrary representations in the same connected component of $\rho_0$.

\boldhead{Sullivan's ``Hyperbolicity implies structural stability".}

In \cite{Sullivan}, Sullivan gives a completely different definition of ``hyperbolic group action" -- one that applies to actions by \emph{homeomorphisms} on any compact metric space $X$.  
Recently, Kapovich, Leeb and Porti \cite{KLP} have used this property in studying the ``boundary actions" of discrete groups on symmetric spaces -- a situation where the best regularity at hand is really $C^0$, so traditional notions of hyperbolicity have nothing to say.  

We explain Sullivan's idea here, and use it to give an alternative proof of Corollary \ref{anosov cor}, without reference to flows, bundles,  or differentiability.

\begin{definition}[Expanding property \cite{Sullivan}]
A homomorphism $f \in \Homeo(X)$ is \emph{expanding at $x \in X$} if there exists $c > 1$ and a neighborhood $U$ of $x$ (called an expanding region) such that
$$d(f(y), f(z)) > c \, d(y,z)\, \text{ for all } y, z \text{ in } U.$$  
An action $\rho: \Gamma \to \Homeo(X)$ is \emph{expanding} if, for each $x \in X$, there exists some $\rho(\gamma)$ that is expanding at $x$.  
\end{definition}

Expanding actions allow one to ``code" points of $X$ by sequences in $\Gamma$.   

\begin{construction}[Coding by expanding regions]
Suppose $X$ is compact, and $\rho: \Gamma \to \Homeo(X)$ is expanding.  Let $S$ be a finite generating set for $\Gamma$, large enough so that the expanding regions for elements of $S$ cover $X$.  A \emph{coding} of $x_0 \in X$ is a sequence $s_i \in S$, chosen so that $x_1: = s_1(x_0)$ lies in the expanding region for $s_2$, and inductively, $x_i := \rho(s_i)(x_{i-1})$ lies in the expanding region for $\rho(s_{i+1})$.
\end{construction}

Using the expanding property, the point $x_0$ can be recovered from a coding sequence $\{s_i\}$.  If $\epsilon$ is chosen sufficiently small (depending on the cover of $X$ by expanding sets, and on the expanding constants for the $s_i$), and $U(x_n)$ is any neighborhood of $x_n$ contained in the $\epsilon$-ball about $x_n$, then
$$x_0 = \bigcap_n  \rho(s_n ...s_1 s_0)^{-1} (U(x_n)).$$

Sullivan's definition of a \emph{hyperbolic} action is one where this coding is essentially unique.  

\begin{definition}[Hyperbolicity \`a la Sullivan] 
An action is called \emph{hyperbolic} if it is expanding, and the following condition holds:
\begin{quote}
\textit{$(\ast)$ There exists $N$ such that, if $\{s_i\}$ and $\{t_i\}$ are coding sequences for a point $x$, each initial segment $s_1 s_2... s_m$ has a counterpart segment $t_1 t_2... t_k$ such that 
$(s_1... s_m)(t_1 ... t_k)^{-1}$
can be represented by a word of length $N$ in $S$.}
\end{quote}
\end{definition}

This brings us to Sullivan's structural stability theorem.  

\begin{theorem}  \label{sullivan stab}
Let $\Gamma$ be a finitely generated group, and $X$ a manifold.  Any hyperbolic $\rho \in \Hom(\Gamma, \Diff^1(X))$ is structurally stable.    
\end{theorem}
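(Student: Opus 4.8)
The plan is to exploit the coding of points of $X$ by sequences in $\Gamma$, and to show that this coding — together with the uniqueness condition $(\ast)$ — depends only on the action up to $C^0$ conjugacy, and survives small $C^1$ perturbations. The key observation is that the expanding property is \emph{open} in the $C^1$ topology on $\Hom(\Gamma, \Diff^1(X))$: if $\rho(\gamma)$ is expanding at $x$ with constant $c$ on an expanding region $U$, then for any $\rho'$ that is $C^1$-close to $\rho$, the diffeomorphism $\rho'(\gamma)$ is still expanding (with constant close to $c$) on a slightly smaller region $U' \subset U$. Since $X$ is compact, only finitely many pairs $(\gamma, U)$ from a generating set $S$ are needed to cover $X$ by expanding regions, so a sufficiently small $C^1$-neighborhood $\mathcal{U}$ of $\rho$ consists entirely of expanding actions, with the \emph{same} finite generating set $S$ and combinatorially the same cover by (slightly shrunken) expanding regions.

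First I would make precise the reconstruction formula $x_0 = \bigcap_n \rho(s_n \cdots s_1 s_0)^{-1}(U(x_n))$ and extract from it a map $\Phi_\rho$ from the space of admissible coding sequences (a shift space $\Omega \subset S^{\mathbb N}$, cut out by the combinatorial condition ``$x_i$ lies in the expanding region of $s_{i+1}$'', which by the previous paragraph is the same subshift for all $\rho' \in \mathcal U$) onto $X$. The expanding property shows $\Phi_\rho$ is well-defined and continuous; condition $(\ast)$ is exactly what is needed to show that two sequences mapping to the same point differ by a bounded-length word, hence that the fibers of $\Phi_\rho$ are controlled in a way independent of $\rho$. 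Next I would observe that $\Phi_\rho$ intertwines the natural $S$-action on $\Omega$ (prepending a generator, suitably truncated) with $\rho$ on $X$: this is immediate from the construction, since applying $\rho(s_1)$ to $x_0 = \Phi_\rho(\{s_i\}_{i\geq 1})$ gives $x_1 = \Phi_\rho(\{s_i\}_{i \geq 2})$.

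The conjugacy is then built as $h := \Phi_{\rho'} \circ \Phi_\rho^{-1}$, interpreted correctly: given $x \in X$, choose any $\rho$-coding sequence $\{s_i\}$ for $x$, and set $h(x) := \Phi_{\rho'}(\{s_i\})$. The content of the proof is that (i) $h$ is well-defined, i.e. independent of the chosen coding sequence — this is where $(\ast)$ enters crucially, since two codings of $x$ for $\rho$ differ by bounded words, and one must check the corresponding $\rho'$-points also agree; because $\rho'$ is close to $\rho$ and the discrepancy is a bounded word in a fixed finite set $S$, the images stay within a controlled distance that the expanding/contracting estimates force to be zero; (ii) $h$ is continuous, which follows from the $\epsilon$-ball reconstruction estimate applied uniformly; (iii) $h$ is a bijection, by running the same construction with $\rho$ and $\rho'$ interchanged to produce the inverse; and (iv) $h$ conjugates $\rho$ to $\rho'$, which is immediate from the equivariance of $\Phi_\rho$ and $\Phi_{\rho'}$ under the common $S$-action on $\Omega$.

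The main obstacle is step (i) — showing $h$ is well-defined — and more precisely, extracting a genuinely uniform estimate from condition $(\ast)$: one must show that prepending two words $w$, $w'$ that represent the same element of $\Gamma$ via a length-$N$ correction, and then applying the long compositions $\rho'(s_m \cdots s_1)$, yields points whose distance shrinks to $0$. This requires combining the expansion estimates for $\rho'$ (which contract under the inverse maps used in reconstruction) with the fact that the ``correction'' lives in a fixed finite ball in $\Gamma$, so the finitely many maps $\rho'(g)$ for $|g| \leq N$ are uniformly continuous — and are $C^1$-close to $\rho(g)$, which fixes the relevant point exactly in the $\rho$-picture. One subtlety worth flagging: the expanding constants and the size $\epsilon$ of the reconstruction balls must be chosen once and for all on $\mathcal U$, shrinking $\mathcal U$ if necessary so that the perturbed expansion constants stay bounded below by some $c_0 > 1$; the compactness of $X$ is what makes this possible.
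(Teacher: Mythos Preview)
Your proposal is correct and follows essentially the same approach as the paper: use the $C^1$-openness of the expanding condition to ensure $\rho'$ has the same expanding generators on the same covering sets, then for each $x$ choose a $\rho$-coding sequence $\{s_i\}$ and define $h(x)$ via the $\rho'$-reconstruction formula $\bigcap_n \rho'(s_n\cdots s_1 s_0)^{-1}(B_\epsilon(x_n))$, with property $(\ast)$ guaranteeing independence of the chosen coding. The paper's sketch is terser and does not introduce the shift-space/$\Phi_\rho$ formalism you use as packaging (and indeed that packaging is slightly imprecise, since the balls $U(x_n)$ in the reconstruction depend on the $\rho$-orbit, not just on the symbolic sequence), but the substantive argument and the division of labor among the hypotheses are identical.
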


More generally, if $X$ is any compact metric space, $\rho: \Gamma \to \Homeo(X)$ is hyperbolic, and $\rho'$ is a small perturbation of $\rho$ such that a set of expanding generators for $\Gamma$ remain expanding on the same covering sets, then Sullivan's proof of Theorem \ref{sullivan stab} shows that $\rho'$ is conjugate to $\rho$.   This happens, for example, if the generators of $\rho$ are perturbed by small bi-lipschitz homeomorphisms.

The proof of Theorem \ref{sullivan stab} is remarkably simple.  For each point $x \in X$, one chooses a coding sequence 
$\{s_i\}$ and small balls $B_\epsilon(x_i)$ about $x_i$.  Define $h \in \Homeo(X)$ by 
$$h(x) = \bigcap_n  \rho'(s_0 s_1... s_n)^{-1} (B_\epsilon(x_n)).$$
The requirement that $\rho$ be expanding implies that $h(x)$ is a singleton, and property $(\ast)$ implies that it is independent of the sequence $\{s_i\}$.   One then checks in a straightforward manner that $h$ is continuous, injective, and defines a conjugacy.

As any discrete, faithful representation $\rho: \Gamma_g \to \PSL(2,\R) \subset \Homeo(S^1)$ is hyperbolic, we have the following corollary. 

\begin{corollary}[Structural stability, \cite{Sullivan}.]
Let $\rho: \pi_1(M) \to \Homeo(S^1)$ be faithful, with image a discrete subgroup of $\PSL(2,\R)$.   Then any sufficiently small $C^1$ or small bi-lipschitz perturbation of $\rho$ is conjugate to $\rho$. 
\end{corollary}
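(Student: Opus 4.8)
The plan is to derive this corollary from Sullivan's structural stability theorem (Theorem~\ref{sullivan stab}) together with the remark following it, by verifying that a faithful representation $\rho$ with discrete image $\Gamma := \rho(\pi_1(M)) \subset \PSL(2,\R)$ is \emph{hyperbolic} in Sullivan's sense. In the case of interest --- $M$ a closed surface, as in Corollary~\ref{anosov cor} --- the quotient $\H^2/\Gamma$ is again a closed surface, so $\Gamma$ is a cocompact Fuchsian group; in particular its limit set is all of $S^1$ and the fixed points of its hyperbolic elements are dense in $S^1$. Throughout I would identify $S^1$ with the ideal boundary $\partial\H^2$, equipped with the visual metric based at some $o \in \H^2$, so that $\Gamma$ acts by M\"obius transformations.

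First I would check the expanding property. If $g \in \Gamma$ is hyperbolic with translation length $\ell > 0$ and repelling fixed point $r$, then $g$ pushes nearby points away from $r$ and $|g'(r)| = e^{\ell} > 1$, so by continuity there is an arc $U \ni r$ on which $|g'| \geq c$ for some $c = c(g) > 1$; integrating $g'$ over subarcs of $U$ then gives $d(g(y),g(z)) \geq c\, d(y,z)$ for $y,z \in U$, so $g$ is expanding at every point of the expanding region $U$. Using density of repelling fixed points together with compactness of $S^1$, I would choose finitely many hyperbolic $g_1,\dots,g_n \in \Gamma$ whose expanding arcs cover $S^1$ and enlarge the generating set $S$ to contain them; then every point of $S^1$ lies in the expanding region of some generator, so $\rho$ is expanding.

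The core of the argument is verifying condition $(\ast)$, and here I would use Gromov hyperbolicity of $\H^2$. A coding $\{s_i\}$ of a point $\xi \in S^1$ produces a nested sequence of arcs $J_i = \rho(s_i\cdots s_1)^{-1}(U_i)$ shrinking to $\xi$; since each $s_{i+1}$ expands on $U_i$ by a definite factor and the arcs $U_i$ come from a fixed finite family, the orbit points $u_i^{-1}o$, where $u_i := s_i\cdots s_1$, advance toward $\xi$ by at least a definite hyperbolic distance at each step while staying within a uniformly bounded distance of the geodesic ray $[o,\xi)$. Consequently, for two codings $\{s_i\}$ and $\{t_i\}$ of the same $\xi$, the points $u_m^{-1}o$ and $v_k^{-1}o$ (with $v_k := t_k\cdots t_1$) both fellow-travel $[o,\xi)$, so each $m$ has a matching $k$ with $d_{\H^2}(u_m^{-1}o, v_k^{-1}o) = d_{\H^2}(o, u_m v_k^{-1}o)$ bounded independently of $m$; proper discontinuity of the $\Gamma$-action on $\H^2$ then forces $u_m v_k^{-1}$ into a fixed finite subset of $\Gamma$, hence to be represented by a word of bounded length $N$ in $S$ --- which is exactly $(\ast)$.

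Granting expansion and $(\ast)$, Theorem~\ref{sullivan stab} yields structural stability under small $C^1$ perturbations, while the remark after that theorem handles small bi-Lipschitz perturbations: such a perturbation leaves the finitely many chosen $g_i$ expanding (with slightly smaller constants) on the same arcs, so Sullivan's proof applies verbatim. The main obstacle is the estimate underlying $(\ast)$ --- converting ``definite expansion of a boundary arc'' into ``definite progress along, and uniformly bounded deviation from, the geodesic ray $[o,\xi)$'' --- which is precisely where the $\delta$-hyperbolicity of $\H^2$ (equivalently, the quasi-isometry between $\Gamma$ with a word metric and $\H^2$) does the work.
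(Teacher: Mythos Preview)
Your proposal is correct and follows exactly the approach the paper intends: verify that a discrete, faithful $\rho$ into $\PSL(2,\R)$ is hyperbolic in Sullivan's sense, then invoke Theorem~\ref{sullivan stab} and the remark after it. The paper itself gives no details here---it simply asserts ``any discrete, faithful representation $\rho: \Gamma_g \to \PSL(2,\R) \subset \Homeo(S^1)$ is hyperbolic'' and states the corollary---so your sketch of the expanding property (via derivatives at repelling fixed points and density of those points) and of condition~$(\ast)$ (via fellow-traveling of coding orbits along geodesic rays in $\H^2$) supplies the content the paper leaves implicit.
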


%-------------------------------
\subsection{Maximality and geometricity}   \label{max subsec}

We turn now to our second theme \textit{maximality and geometricity}, starting with a particular consequence of Theorem \ref{goldman thm}.

\begin{theorem}[Goldman \cite{Goldman}, \cite{Goldman thesis}] \label{goldman rig thm}
If $\gamma \in \Hom(\Gamma_g, \PSL(2,\R))$ has (maximal) Euler number $\pm \chi(\Sigma_g)$, then it is discrete and faithful and defines a hyperbolic structure on $\Sigma_g$.  
\end{theorem}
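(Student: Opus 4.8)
The plan is to reduce the statement to the classical fact that a \emph{complete} hyperbolic structure on a closed surface has discrete, faithful holonomy, by building such a structure directly out of the extremal-Euler-number hypothesis — this is the route of Goldman's thesis, and it has a pleasantly geometric flavor. After conjugating by an orientation-reversing homeomorphism of $S^1$ if necessary, we may assume $\euler(\rho) = \chi(\Sigma_g)$, the two extremal values being interchanged by such a conjugation. View $\rho$ as the holonomy of a flat $\H^2$-bundle $E_\rho \to \Sigma_g$, and choose a $\rho$-equivariant developing map $f\colon \widetilde{\Sigma}_g \to \H^2$ by the standard recipe: realize $\Sigma_g$ as a $4g$-gon with a single vertex, pick an image point for that vertex, send the $4g$ edges to geodesic segments prescribed by $\rho$ on the generators, and fill the faces in by geodesic straightening. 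Because $f$ is equivariant and the hyperbolic area form $\omega$ is $\PSL(2,\R)$-invariant, $f^*\omega$ descends to $\Sigma_g$, and with the standard normalization $\int_{\Sigma_g} f^*\omega = \pm 2\pi\,\euler(\rho)$. This is precisely the geometry underlying the Milnor--Wood inequality for $\PSL(2,\R)$: the algebraic area of the developed fundamental polygon lies between $-2\pi(2g-2)$ and $2\pi(2g-2)$ (the ideal-polygon bound), giving $|\euler(\rho)| \le 2g-2$.

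The heart of the matter — and the step I expect to be the main obstacle — is the \emph{equality analysis}. When $|\euler(\rho)| = 2g-2$ is extremal, every estimate in the Milnor--Wood argument is forced to be sharp; I would trace this through to conclude that the developed fundamental polygon is embedded and nondegenerate, so that $f$ is an orientation-preserving local diffeomorphism. Equivalently, $f^{*}(\text{hyperbolic metric})$ is a genuine Riemannian metric on $\Sigma_g$ — nowhere degenerate — necessarily complete since $\Sigma_g$ is compact, and of constant curvature $-1$. Ruling out the possible degeneracies (folding, branching, overlapping of the developed polygon) is exactly the delicate part of Goldman's argument and is where the real work lies.

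Once this is in hand the conclusion is immediate. The developing map $f$ is now a local isometry from the complete surface $\widetilde{\Sigma}_g$ to $\H^2$, hence a covering map, hence (as $\H^2$ is simply connected) a diffeomorphism intertwining the deck action of $\Gamma_g$ with $\rho$. Therefore $\rho$ is injective with discrete, cocompact image, i.e., $\rho$ is discrete and faithful and exhibits $\Sigma_g = \H^2/\rho(\Gamma_g)$, defining a point of Teichm\"uller space.

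One can also take the shortcut hinted at by the phrasing ``a particular consequence of Theorem~\ref{goldman thm}'': by that theorem the extremal value $\pm\chi(\Sigma_g)$ is attained on a single connected component of $\Hom(\Gamma_g, \PSL(2,\R))$, which already contains discrete faithful representations (any hyperbolic structure on $\Sigma_g$ provides one). It then suffices to see that the set $\mathcal{F}$ of discrete faithful representations is open and closed in that component: openness is the local rigidity of cocompact Fuchsian representations (Weil rigidity, or openness of convex-cocompact representations), while closedness says that a limit of discrete faithful representations whose Euler number is pinned at the extremal value cannot degenerate — a degenerate or elementary limit would have $|\euler| < 2g-2$. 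But identifying exactly which limits are excluded is again the Milnor--Wood equality case (or Matsumoto's rigidity theorem, Theorem~\ref{mats thm}), so this route ultimately rests on the same analysis.
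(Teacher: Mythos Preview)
The paper does not give its own proof of this theorem; it is stated as a result of Goldman (cited to \cite{Goldman thesis} and \cite{Goldman}) and framed as ``a particular consequence of Theorem~\ref{goldman thm}'', with no further argument. So there is nothing in the text to compare against beyond that one-line framing.

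Your first approach --- build an equivariant map $\widetilde{\Sigma}_g \to \H^2$, compute the Euler number as the algebraic area of the developed polygon, and analyze the equality case of the area bound to force the map to be a local diffeomorphism --- is exactly the route of Goldman's thesis, and you have correctly flagged the equality analysis as the substantive step. Your second approach is precisely what the paper's phrasing ``consequence of Theorem~\ref{goldman thm}'' gestures at: once the extremal Euler number picks out a single connected component, one only needs that the discrete faithful locus is open and closed in it. One correction there: closedness does \emph{not} rest on the Milnor--Wood equality case. A limit of discrete faithful representations of a non-virtually-solvable group into $\PSL(2,\R)$ is again discrete and faithful (this is Chuckrow's theorem, or an application of the Kazhdan--Margulis lemma / J{\o}rgensen inequality); the alternative --- an elementary limit --- is ruled out simply because elementary representations have Euler number zero. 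So the second route is genuinely independent of the first, contrary to your final sentence.
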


This theme -- that representations maximizing some characteristic number should have nice geometric properties -- has been extended to representations of surface groups into many other Lie groups.    A semi-simple Lie group $G$ is said to be of \emph{Hermitian type} if it has finite center, no compact factors, and the associated symmetric space $X$ has a $G$-invariant complex structure.  
This structure allows one to assign a characteristic \emph{Toledo number} $T(\rho)$ to each representation $\rho: \Gamma_g \to G$. Analogous to the Euler class and Euler number for circle bundles, it satisfies an inequality
$| T(\rho) | \leq \chi(\Sigma_g) \mathrm{rank}(X).$ (Proofs of special cases of this inequality were given by Turaev, and Domic and Toledo; the general case is due to Clerc and \O rsted.)

Toledo showed that, in the special case $G = \SU(n,1)$, representations maximizing this invariant have a nice geometric property -- they \emph{stabilize a complex geodesic}, i.e. are conjugate into $\mathrm{S}(\mathrm{U}(n-1) \times \mathrm{U}(1,1))$.  This result was gradually improved by a number of authors, culminating in the following theorem of Burger, Iozzi and Wienhard.  

\begin{theorem}[``Maximal implies geometric" \cite{BIW}] \label{BIW thm}
Let $G$ be a Hermitian Lie group.  If $\rho: \Gamma_g \to G$ has maximal Toledo invariant, then $\rho$ is injective and has discrete image.   Moreover, there is a concrete structure theory that describes the image of $\rho$.  
\end{theorem}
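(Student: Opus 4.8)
The natural tool here is bounded cohomology, which lets one treat the Toledo invariant and the rigidity of its extremal values uniformly across all Hermitian $G$; I will sketch the approach of Burger--Iozzi--Wienhard. First, fix a hyperbolic structure on $\Sigma_g$, so that $\Gamma_g$ acts as a cocompact Fuchsian group with boundary circle $S^1$, on which it acts by the standard projective action. The $G$-invariant K\"ahler form on the symmetric space $X$ determines a \emph{bounded K\"ahler class} $\kappa_G \in H^2_{cb}(G;\R)$, represented by the Bergmann cocycle $\beta_X$: a bounded, measurable, $G$-invariant function on triples of points in the Shilov boundary $\check{S}_X$ of $X$, whose essential supremum (in a suitable normalization) equals $\mathrm{rank}(X)$ and is attained exactly on the so-called \emph{maximal} triples. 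The Toledo invariant is recovered as the pairing of $\rho^*\kappa_G$ with the fundamental class of $\Sigma_g$, via the comparison map $H^2_b(\Gamma_g;\R) \to H^2(\Gamma_g;\R) \cong \R$; and the inequality $|T(\rho)| \le |\chi(\Sigma_g)|\,\mathrm{rank}(X)$ becomes the norm estimate $\|\rho^*\kappa_G\| \le \|\kappa_G\|$ -- a Milnor--Wood type bound generalizing Theorem \ref{MW thm}.

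The plan is then to extract a boundary map from the extremal hypothesis. Because point stabilizers for the $\Gamma_g$-action on $S^1$ are amenable, functoriality of bounded cohomology (in the form developed by Burger and Monod) produces a measurable $\rho$-equivariant map $\varphi \colon S^1 \to \check{S}_X$ with the property that $\varphi^*\beta_X$ agrees, almost everywhere, with $\mathrm{rank}(X)$ times the orientation cocycle on $S^1$. Since the latter attains its maximum precisely on positively ordered triples, this forces $\varphi$ to send almost every positively ordered triple of $S^1$ to a maximal triple of $\check{S}_X$.

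The heart of the argument -- and the step I expect to be the main obstacle -- is a rigidity statement for such ``maximal'' measurable maps: after modification on a null set, $\varphi$ is a monotone map, injective onto its image, and is in fact the restriction to $S^1$ of a rational map; moreover its image lies in the Shilov boundary of a tube-type, totally geodesic, holomorphically embedded subdomain $T \subseteq X$. Proving this requires a careful study of the incidence geometry of the Bergmann cocycle -- which configurations on $\check{S}_X$ are maximal, how the stabilizer of a maximal pair acts, and the ``tightness'' of $\beta_X$ in the sense of Burger--Iozzi -- and the passage from a general $X$ to a (possibly proper) tube-type subdomain, even when the maximum of $T$ is achieved, is a genuine subtlety with no analogue in the rank-one case. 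Granting this, $\rho(\Gamma_g)$ preserves $T$, so one reduces to a maximal representation into $\mathrm{Stab}_G(T)$, a tube-type situation, equipped with an injective monotone boundary circle map. Injectivity and discreteness of $\rho$ then follow as in the classical cases: a nontrivial element of the kernel, or a sequence of distinct group elements $\gamma_n$ with $\rho(\gamma_n) \to \id$, would force $\varphi$ to collapse a subinterval of $S^1$, contradicting injectivity -- this is precisely the mechanism behind Theorem \ref{goldman rig thm} for $G = \PSL(2,\R)$ and behind Toledo's theorem for $G = \SU(n,1)$. Finally, the ``concrete structure theory'' promised in the statement is the description of $\rho$ as a tight homomorphism into $\mathrm{Stab}_G(T)$, combined with the classification of tight homomorphisms between Hermitian Lie groups.
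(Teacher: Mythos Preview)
The paper does not prove this theorem at all: it is quoted from \cite{BIW}, and the only commentary offered is a two-sentence hint that one factors $\rho$ through the Zariski closure $H$ of its image,
\[
\Gamma_g \to H \hookrightarrow G,
\]
and then studies separately the Zariski-dense maximal representation $\Gamma_g \to H$ and the geometry of the inclusion $H \hookrightarrow G$.

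Your sketch is a reasonable and essentially accurate outline of the actual Burger--Iozzi--Wienhard argument, but it foregrounds a different piece of the machinery than the paper's hint does. You go straight to the bounded K\"ahler class, the equivariant boundary map $\varphi: S^1 \to \check{S}_X$ produced via amenability, and the incidence-geometric analysis of maximal triples; the Zariski-closure reduction appears only implicitly at the end, folded into ``the classification of tight homomorphisms.'' In the full BIW proof both ingredients are needed: the Zariski-closure step reduces to the case where the image is dense in a reductive $H$ of Hermitian type, and it is in that reduced setting that the boundary-map analysis you describe is carried out. So your emphasis is complementary to, rather than in conflict with, the paper's hint. One small caution: calling the straightened boundary map ``the restriction of a rational map'' overstates what is proved in general --- the correct conclusion is that $\varphi$ is (essentially) left-continuous, monotone, and takes values in the boundary of a tube-type subdomain; rationality in any algebraic sense is not part of the statement.
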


We have been intentionally vague about the ``structure theory", as it is somewhat complicated to state.   A major ingredient in the proof is analyzing the Zariski closure of the image of $\Gamma_g$ -- one views $\rho$ as a composition of two ``maximal" representations
$$ \Gamma_g \to H \hookrightarrow G$$
where the image of $\Gamma_g$ is Zariski dense in $H$, and then one studies separately the maximal representations $\Gamma_g \to H$ and the restrictions on the geometry of $H$ as a subgroup of $G$.   
See \cite{BIW}, or for a gentler introduction, the survey papers \cite{BILW} or \cite{BIW survey}.

%----------
\boldhead{Maximality in $\Homeo(S^1)$.}
Though Theorem \ref{BIW thm} and its proof appear to rely heavily on the structures of Lie groups , there is a direct analog for representations of surface groups to $\Homeo(S^1)$, due to Matsumoto.  

\begin{theorem}[Matsumoto's ``maximal implies geometric" \cite{Matsumoto}]  \label{mats thm}
Let $\rho: \Gamma_g \to \Homeo(S^1)$ be a representation with (maximal) Euler number $\pm \chi(\Sigma_g)$.  Then $\rho$ is faithful, and semi-conjugate to a representation with image a discrete subgroup of $\PSL(2,\R)$.  
\end{theorem}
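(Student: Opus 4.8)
The plan is to exploit the fact that the Euler number is a \emph{bounded} cohomological invariant realized by an explicit cocycle, and that maximality forces this cocycle to behave exactly like the cocycle of a Fuchsian action. Concretely, I would begin by choosing lifts: let $\tilde\rho(s)$ be a lift to $\widetilde{\Homeo}(S^1)$ (homeomorphisms of $\R$ commuting with integer translation) of each generator, and recall that the Euler number is computed from the translation-number quasimorphism applied to the word $\prod_i[\tilde\rho(a_i),\tilde\rho(b_i)]$, which is an integer translation. The Milnor--Wood inequality is the statement that this integer lies in $[\chi,-\chi]$, and its proof exhibits, for each relator, a chain of inequalities $|\rott([\tilde f,\tilde g])|\le 1$ that combine to give the bound. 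The first key step is therefore to run through that proof under the \emph{equality} hypothesis and extract the rigidity: equality in Milnor--Wood forces, for each $i$, that $[\tilde\rho(a_i),\tilde\rho(b_i)]$ behaves extremally, which in turn pins down the combinatorics of how the fixed-point sets and the cyclic order of orbits of $\rho(a_i),\rho(b_i)$ must be arranged on $S^1$.

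The second step is to convert this combinatorial rigidity into the existence of an equivariant monotone map to a circle carrying the standard Fuchsian action. The natural mechanism is the \emph{Euler cocycle / orientation cocycle}: a maximal representation means the pullback of the bounded Euler class equals the bounded Euler class of a Fuchsian representation $\rho_0$ in bounded cohomology. By the theory of the bounded Euler class (Ghys), two actions with the same bounded Euler class are semi-conjugate, so it would suffice to show that the bounded Euler class of a maximal $\rho$ equals that of some $\rho_0\colon\Gamma_g\to\PSL(2,\R)$ discrete and faithful. I would establish this by showing the pulled-back bounded Euler class is, up to sign, the generator of $H^2_b(\Gamma_g;\R)$ coming from a Fuchsian action — which follows because that generator has norm exactly $|\chi|/2$ (Gromov, Matsumoto), the pullback has norm at most that by boundedness, and maximality of the Euler \emph{number} forces the norm to be achieved, hence the class is extremal and (since the extremal ray is one-dimensional) proportional to the Fuchsian class with the right constant.

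Alternatively — and this is closer to Matsumoto's actual argument, which avoids invoking bounded cohomology machinery wholesale — I would build the semi-conjugacy by hand: using the rigidity extracted in step one, show that $\Gamma_g$ acting via $\rho$ has a minimal set, analyze the action on that minimal set, and produce a monotone degree-one map $h\colon S^1\to S^1$ intertwining $\rho$ with an action of $\Gamma_g$ by \emph{projective} transformations, by matching up the dynamically distinguished points (fixed points of relators, endpoints of the intervals permuted by the generators) with the fixed points of a hyperbolic element in $\PSL(2,\R)$. Faithfulness then comes for free: a discrete faithful $\rho_0\colon\Gamma_g\to\PSL(2,\R)$ has trivial kernel and any element killed by $\rho$ would have to act trivially on the minimal set and hence be killed by $\rho_0$ too, forcing it to be trivial in $\Gamma_g$.

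The main obstacle I expect is step one done \emph{sharply}: extracting genuine rigidity (not just the inequality) from the Milnor--Wood estimate requires understanding precisely which configurations of $\tilde\rho(a_i),\tilde\rho(b_i)$ make each local inequality $|\rott([\tilde f,\tilde g])|\le 1$ an equality, and then assembling these $g$ local equalities into a global statement about the cyclic order of \emph{all} the orbits simultaneously — there is real combinatorial bookkeeping here, and a potential subtlety if $\rho$ has wandering intervals (which is exactly where semi-conjugacy rather than conjugacy becomes unavoidable and the Denjoy-type phenomena of Construction~\ref{denjoy trick} enter). Handling the non-minimal case cleanly — passing to the unique minimal set, checking the blow-down is equivariant, and verifying the resulting map is monotone of degree one — is where I would budget most of the care.
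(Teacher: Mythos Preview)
The paper does not actually give a proof of Matsumoto's theorem as stated; it cites \cite{Matsumoto} for that and instead proves only the \emph{corollary} (Proposition~\ref{mat rig}): maximal representations are fully rigid. The paper's route to that corollary is quite different from either of yours---it uses the Calegari--Walker formula (Theorem~\ref{CW thm}) to show that equality in Milnor--Wood forces $\rott[\rho(a_i),\rho(b_i)]=1$ for each $i$, then Lemma~\ref{comm lemma} to deduce $\rot(\rho(a_1))=0$, and Scott's subgroup theorem to promote this to $\rot(\rho(\gamma))=0$ for every $\gamma$. The remark following Proposition~\ref{mat rig} then explicitly says this does \emph{not} yield the full semi-conjugacy statement: one must additionally compute $\tau(\rho(\gamma),\rho(\gamma'))$ for all pairs, and ``describing $\tau(\rho(\gamma),\rho(\gamma'))$ in terms of $\gamma$ and $\gamma'$ (topologically) is the main content of Matsumoto's proof.''

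Your bounded-cohomology route has a real gap. You argue that maximality of the Euler number forces the norm of the pulled-back bounded class to be achieved, ``hence the class is extremal and (since the extremal ray is one-dimensional) proportional to the Fuchsian class.'' But $H^2_b(\Gamma_g;\R)$ is infinite-dimensional, and the set of bounded classes of norm at most $1/2$ pairing maximally with the fundamental class is not a priori a single ray---establishing that is essentially the theorem you are trying to prove. Moreover, Ghys' criterion for semi-conjugacy concerns the \emph{integer} bounded Euler class, not the real one, so even pinning down the real class would not finish the argument. Your second route (build the semi-conjugacy by hand from the combinatorics forced by equality in Milnor--Wood) is in spirit what Matsumoto does, but the substance is in the bookkeeping you defer: the global computation of the cocycle $\tau$ on all pairs of elements, not just the generators. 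That is precisely the step the paper flags as the genuine content, and your sketch does not indicate how you would carry it out.
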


Even more surprising, there are a number of parallels between Matsumoto's proof and the proof of Theorem \ref{BIW thm} in \cite{BIW}.  
In \cite{Iozzi}, Iozzi applies tools developed to study representations of Lie groups (e.g. boundary maps and continuous bounded cohomology) to give a new proof of Matsumoto's theorem, and a unified approach to studying maximal representations into $\SU(n,1)$ and $\Homeo(S^1)$.

\boldhead{Rigidity from geometricity}
As all discrete, faithful representations into $\PSL(2, \R)$ are conjugate by homeomorphisms of the circle (c.f. Section \ref{mostow subsec}), this gives the following ``full rigidity" theorem.  

\begin{corollary}  \label{mat rig cor}
Representations in $\Hom(\Gamma_g, \Homeo(S^1))$ with maximal Euler number are fully rigid.  
Equivalently, the sets of representations with Euler number $\chi(\Sigma_g)$ and with Euler number $-\chi(\Sigma_g)$ each consist of a single semi-conjugacy class.  
\end{corollary}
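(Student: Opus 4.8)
The plan is to derive the corollary directly from Matsumoto's Theorem~\ref{mats thm} and the ``Mostow rigidity on the circle'' discussion of Section~\ref{mostow subsec}, together with two facts recorded earlier in the text: the Euler number varies continuously on $\Hom(\Gamma_g,\Homeo(S^1))$, hence is constant on connected components, and a semi-conjugacy class is path-connected (the statement following Definition~\ref{totrig def}). I will also use, as part of the formal discussion of semi-conjugacy deferred to Section~\ref{rot subsec}, that the Euler number is a semi-conjugacy invariant and that semi-conjugacy generates an equivalence relation on representations. The strategy is to prove the ``equivalently'' clause first --- that $\{\rho : \euler(\rho) = -\chi(\Sigma_g)\}$ and $\{\rho : \euler(\rho) = \chi(\Sigma_g)\}$ are each a single semi-conjugacy class --- and then read off full rigidity.

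First I would fix a representation $\rho$ with $\euler(\rho) = -\chi(\Sigma_g)$. By Theorem~\ref{mats thm} it is semi-conjugate to some $\rho_0$ whose image is a discrete subgroup of $\PSL(2,\R)$; since semi-conjugacy preserves the Euler number, $\euler(\rho_0) = -\chi(\Sigma_g)$ as well, so Goldman's Theorem~\ref{goldman rig thm}, applied to the $\PSL(2,\R)$-representation $\rho_0$, shows that $\rho_0$ is discrete, faithful, and the holonomy of a hyperbolic structure on $\Sigma_g$. Given a second such representation $\rho'$ with hyperbolic holonomy $\rho_0'$, Section~\ref{mostow subsec} provides a homeomorphism of $S^1$ conjugating $\rho_0$ to $\rho_0'$; this homeomorphism is necessarily orientation-preserving, since an orientation-reversing conjugation would flip the sign of the nonzero Euler number. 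Hence $\rho_0$ and $\rho_0'$ are conjugate, in particular semi-conjugate, and chaining $\rho \sim \rho_0 \sim \rho_0' \sim \rho'$ shows that any two representations of Euler number $-\chi(\Sigma_g)$ are semi-conjugate: this set is a single semi-conjugacy class. The Euler number $\chi(\Sigma_g)$ case follows formally by conjugating everything by a fixed orientation-reversing homeomorphism $\iota$ of $S^1$: the map $\sigma \mapsto \iota\sigma\iota^{-1}$ is a self-homeomorphism of $\Hom(\Gamma_g,\Homeo(S^1))$ that negates Euler numbers and permutes semi-conjugacy classes, so it carries the Euler-number-$(-\chi)$ class bijectively onto the Euler-number-$\chi$ set.

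To obtain full rigidity, I would fix $\rho$ with $\euler(\rho) = \pm\chi(\Sigma_g)$ and let $C$ be its semi-conjugacy class, which by the above is exactly the set of representations with that Euler number. The connected component of $\rho$ in $\Hom(\Gamma_g,\Homeo(S^1))$ lies inside $C$ because the Euler number is locally constant, while $C$ lies inside that component because $C$ is path-connected. So the component equals $C$, a single semi-conjugacy class, which is precisely the content of Definition~\ref{totrig def}.

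I do not expect a serious obstacle: the statement is genuinely a corollary of Theorem~\ref{mats thm}. The only delicate point is the bookkeeping around semi-conjugacy --- that it preserves the Euler number, generates an equivalence relation, and interacts correctly with orientation-reversing conjugation --- and since these properties belong to the formal definitions deferred to Section~\ref{rot subsec}, a fully careful write-up would either forward-reference them or isolate them in a short preliminary lemma.
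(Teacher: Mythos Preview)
Your proof is correct and follows essentially the same route the paper takes at the point where the corollary is stated: apply Matsumoto's Theorem~\ref{mats thm} to land in the discrete, faithful $\PSL(2,\R)$ representations, then use the observation from Section~\ref{mostow subsec} that any two of these are conjugate by a homeomorphism of $S^1$ (with your orientation bookkeeping handling the sign of the Euler number). Note, however, that the paper later gives a second, independent proof of this same corollary (Proposition~\ref{mat rig}) using rotation-number coordinates and the Calegari--Walker estimates, which avoids invoking Matsumoto's theorem altogether.
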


It is our contention that the phenomenon underlying this rigidity is geometricity --  Matsumoto's theorem states that maximal representations are \emph{geometric} in the sense of Definition \ref{geom def}.  The following theorem was proved in the case of surface groups in \cite{Invent}.    

\begin{theorem}[``Geometricity implies rigidity" \cite{Invent}]  \label{invent thm}
Let $\rho: \Gamma \to \Homeo(S^1)$ be a geometric representation.  Then $\rho$ is fully rigid.  
\end{theorem}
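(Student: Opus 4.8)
The plan is to prove that for a geometric representation $\rho_0\colon\Gamma\to\Homeo(S^1)$ the semi-conjugacy class of $\rho_0$ is both open and closed in $\Hom(\Gamma,\Homeo(S^1))$. Since the set of representations semi-conjugate to a fixed one is always path-connected, a path-connected subset that is also open and closed is a whole connected component, which is exactly the assertion of full rigidity. By the classification of geometric actions on $S^1$, either $\Gamma$ is finite cyclic with $\rho_0$ conjugate into $\SO(2)$ --- in which case $\rho_0$ is fully rigid already by Example \ref{finite ex} --- or $\Gamma$ is a finite extension of a surface group $\Gamma_g$ and $\rho_0$ is conjugate to a representation into $\PSLk$ for some $k\geq 1$. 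I would treat in detail the core case $\Gamma=\Gamma_g$ with $\rho_0$ a representation into $\PSLk$ whose projection to $\PSL(2,\R)$ is a cocompact Fuchsian group, the finite-extension case following by restricting to $\Gamma_g$ and tracking the (finite, hence rigid) quotient data. When $k=1$ the Euler number of $\rho_0$ is the extremal value $\pm\chi(\Sigma_g)$, which is locally constant on $\Hom(\Gamma_g,\Homeo(S^1))$, and Matsumoto's Theorem \ref{mats thm} (equivalently Corollary \ref{mat rig cor}) already puts every representation with this Euler number into the single semi-conjugacy class of a Fuchsian representation; so the genuinely new case is $k\geq 2$, where $\rho_0$ has \emph{non-maximal} Euler number $\pm(2g-2)/k$ and Matsumoto's theorem does not apply directly.

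For \emph{openness} I would start from the fact that the action of $\rho_0(\Gamma_g)$ on its circle --- the $k$-fold cover of the geometric circle --- is minimal and carries an expanding, Sullivan-hyperbolic structure, being the combinatorial shadow of the weak-unstable foliation of the Anosov geodesic flow on the $k$-fold fiberwise cover of the unit tangent bundle of $\Sigma_g$ (cf.\ the discussion before Theorem \ref{bowden thm}). Sullivan's structural stability theorem \ref{sullivan stab} then handles $C^1$-small perturbations. To reach \emph{all} $C^0$-small perturbations --- and thereby obtain only a semi-conjugacy --- the metric expansion must be replaced by a finite, purely combinatorial datum that survives $C^0$-perturbation: a finite family of arcs and a finite generating set acting on them by a Markov / ping-pong pattern, with the rotation-number coordinates of \cite{Invent} recording how the pattern deforms. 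For a perturbation $\rho$ close to $\rho_0$ one reruns the coding construction: points of $S^1$ still code by admissible words in the generators, and the map $x\mapsto\bigcap_n \rho_0(s_n\cdots s_0)^{-1}\bigl(U(x_n)\bigr)$ is now a \emph{monotone degree-one} map rather than a homeomorphism --- the nested arcs may limit onto a sub-arc instead of a point --- hence a semi-conjugacy of $\rho$ onto $\rho_0$. Isolating this $C^0$-robust combinatorics, and proving both that it persists under small perturbation and that it suffices to reconstruct a semi-conjugacy, is where the work of \cite{Invent} lives and is the main obstacle; the case $k\geq2$ is harder than $k=1$ precisely because the weaker Euler-number bound no longer does this bookkeeping for free.

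For \emph{closedness}, suppose $\rho_n\to\rho$ with each $\rho_n$ semi-conjugate to $\rho_0$, and choose monotone degree-one maps $h_n$ with $h_n\circ\rho_n(\gamma)=\rho_0(\gamma)\circ h_n$ for all $\gamma$, normalized so that a fixed lift $\tilde h_n\colon\R\to\R$ has $\tilde h_n(0)\in[0,1)$. These are non-decreasing functions commuting with the unit translation, so by Helly's selection principle a subsequence converges pointwise to such a function $\tilde h$, i.e.\ to a monotone degree-one map $h$; passing to the limit in the equivariance relation (legitimate at continuity points of $h$, which are dense, hence everywhere by monotonicity) yields $h\circ\rho(\gamma)=\rho_0(\gamma)\circ h$. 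Finally $h$ is non-constant, since a constant value would be a global fixed point of $\rho_0(\Gamma_g)$, which a cocompact lattice in $\PSLk$ has none. Hence $h$ is a genuine semi-conjugacy and $\rho$ lies in the semi-conjugacy class of $\rho_0$. This direction is soft; the entire weight of the proof rests on constructing the $C^0$-stable combinatorial coding in the openness step.
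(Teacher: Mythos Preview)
Your overall architecture --- show the semi-conjugacy class of $\rho_0$ is open and closed --- is reasonable, and your closedness argument via Helly's selection principle is correct and clean. But your approach to openness, and more importantly your description of what \cite{Invent} actually does, diverges substantially from the paper.

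The paper does \emph{not} construct semi-conjugacies directly via a Sullivan-style coding. Instead, it uses Lemma~\ref{const coord lemma}: by the Ghys--Matsumoto Theorem~\ref{rot coord}, to show a connected component is a single semi-conjugacy class it suffices to show that $\rot(\rho(\gamma))$ is constant on that component for every $\gamma\in\Gamma_g$. After reducing (via Scott's theorem) to the case where $\gamma=a_1$ is a standard generator, the paper defines a set $N$ of representations satisfying three combinatorial constraints: a prescribed cyclic order of periodic points of $\rho(a_i)$, $\rho(b_i)$, $\rho(b_ia_i^{-1}b_i^{-1})$; the equality $\rott[\rho(a_i),\rho(b_i)]=1/k$; and a prescribed cyclic order of periodic points of the commutators $c_i$. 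The Calegari--Walker algorithm is then used to show $N$ is open and closed --- the point being that the algorithm bounds rotation numbers from above in terms of periodic-point configurations, and the Euler-number constraint forces these bounds to be saturated, which in turn pins down the configurations. Once $N$ is known to be clopen, Lemma~\ref{comm lemma} forces $\rot(\rho(a_1))$ to have denominator dividing $k$, hence to be locally constant.

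Your sketch attempts instead to upgrade Sullivan's expansion coding to a $C^0$-stable version producing monotone maps. This is an appealing idea, but the sentence ``the metric expansion must be replaced by a finite, purely combinatorial datum'' hides the whole difficulty: metric expansion genuinely fails under $C^0$ perturbation, and there is no evident reason the nested intersections $\bigcap_n \rho_0(s_n\cdots s_0)^{-1}(U(x_n))$ remain nonempty or nested without it. The paper's mechanism for $C^0$-stability is not a modified coding but the rotation-number inequalities coming from Algorithm~\ref{CW alg}, which are of a completely different character. So while your closedness half is fine (and in fact unnecessary in the paper's approach, since constancy of rotation numbers is automatically a closed condition), your openness half has a genuine gap that the paper fills by an entirely different route.
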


\noindent This theorem not only implies Corollary \ref{mat rig cor}, but also gives rigidity for the representations of surface groups constructed in Example \ref{pslk ex}, and rigidity of representations of other lattices in $\PSLk$.    We also conjecture that geometricity is \emph{the only} source of full rigidity for actions of surface groups.  

\begin{conjecture} \label{invent conj}
Suppose that $\rho: \Gamma_g \to \Homeo(S^1)$ is fully rigid.  Then $\rho$ is geometric.  
\end{conjecture}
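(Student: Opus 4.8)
This being an open problem, what follows is a program rather than a proof. The plan is to argue by contraposition: given $\rho$ that is not geometric, produce a path $\rho_t$ through $\rho$ in $\Hom(\Gamma_g,\Homeo(S^1))$ that leaves the semi-conjugacy class of $\rho$, contradicting full rigidity. One must read ``geometric'' up to semi-conjugacy, since a Denjoy-ed geometric action (Construction \ref{denjoy trick}) is semi-conjugate to a geometric one, hence lies in the same connected component and is therefore fully rigid, yet is conjugate into no $\PSLk$. The invariants obstructing the sought deformation are the Euler number and, more finely, the translation/rotation numbers $\rho \mapsto \rot(\rho(w))$ of words $w$ in the generators: both are continuous on $\Hom$ and constant on semi-conjugacy classes, so for fully rigid $\rho$ all of these are constant on the entire connected component. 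First I would normalize: passing to the minimal set does not change the semi-conjugacy class, so I may assume $\rho$ is minimal unless it has a finite orbit -- and in the finite-orbit case the complementary arcs carry an entire $\Homeo([0,1])$-worth of freedom, from which a short argument (e.g. combining with a homomorphism $\Gamma_g \to \Z \to \SO(2)$, or perturbing a return-map subgroup equivariantly along the orbit of one arc) shows that such $\rho$ is never fully rigid. Next, split on $n := \euler(\rho)$: if $|n| = 2g-2$, Matsumoto's theorem (Theorem \ref{mats thm}) already says $\rho$ is semi-conjugate to a Fuchsian representation, which is geometric. So the heart of the matter is a \emph{minimal} action with $|n| < 2g-2$.

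\textbf{The deformation.}
For such $\rho$ the aim is to locate a \emph{flexible piece}: either a single element $\gamma$ whose action has an isolated, hyperbolic-type fixed point bounding an interval on which the rest of the group leaves genuine slack (a ``free'' configuration in the sense of the Calegari--Walker analysis, \cite{CW}), or a proper subsurface subgroup $\pi_1(\Sigma') \le \Gamma_g$ acting non-rigidly, which can then be deformed independently of the rest and reassembled by an amalgamation construction. In either form the deformation is local and continuous across the relevant boundary, so the surface relator stays satisfied and $\rho_t$ is a genuine homomorphism; this is essentially the connected-sum mechanism used in \cite{Invent} to produce the more than $2^g$ components of Theorem \ref{comp thm}. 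One then arranges the perturbation so that it creates or destroys a fixed point of $\rho_t(w)$ for some word $w$ crossing the flexible piece, so that $\rot(\rho_t(w))$ moves and $\rho_t$ leaves the semi-conjugacy class of $\rho$. Dynamically, the geometric $\PSLk$-representations (with $k \mid 2g-2$, Euler number $\pm(2g-2)/k$) are precisely those in which the $2g$ generators sit in a \emph{tight} ping-pong configuration admitting no such flexible piece -- this is the very tightness that powers the proof of Theorem \ref{invent thm} (``geometricity implies rigidity''). So the real content of this step is the converse implication: that the \emph{absence} of a tight configuration always exposes a flexible piece, together with a matching of every tight combinatorial type against the finite list of $\PSLk$ models.

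\textbf{The main obstacle.}
The hard part will be this last structural classification: showing that a minimal action of $\Gamma_g$ with non-maximal Euler number and no flexible piece is genuinely conjugate into some $\PSLk$. Unlike the higher-rank settings of Ghys' and Witte-Morris' theorems, surface groups have no superrigidity, so this cannot be imported from representation theory and must be extracted purely dynamically -- essentially one must reconstruct a $\rho$-invariant projective structure on $S^1$, or a direct conjugacy, out of the combinatorics of the fixed-point configuration, in the spirit of the way Ghys upgrades a topological conjugacy to a projective one in Theorems \ref{ghys loc thm} and \ref{ghys glob rig}, but now with no a priori conjugacy to start from. I expect the right framework is a multivariable version of the Calegari--Walker rotation-number analysis (\cite{CW}) for the images of the $4g$ generators under the surface relator, whose locally-constant locus one conjectures to be exactly the geometric representations; pinning down that locus precisely is the difficulty, and it may well require invariants beyond those currently available (note the absence of a literal Lie algebra for $\Homeo(S^1)$, so the usual $H^1(\Gamma_g; \mathfrak g_\rho)$ deformation count has no direct meaning here -- compare \cite{MP}). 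Two subsidiary points remain to be checked: that the equivariant insertion of each perturbation genuinely lands in $\Homeo(S^1)$ (it does, since nothing here is better than $C^0$), and that fully rigid actions with exceptional minimal sets or finite orbits are truly excluded rather than merely semi-conjugate to something still awaiting classification. I expect both to follow once the minimal case is understood, but neither is automatic.
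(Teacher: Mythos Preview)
This statement is a \emph{conjecture} in the paper, not a theorem; the paper contains no proof of it. You correctly identify this (``This being an open problem, what follows is a program rather than a proof''), so there is no gap to name in the usual sense.

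That said, your program aligns closely with the approach the paper sketches in Section~\ref{flex sec}. Your ``flexible piece'' and ``amalgamation construction'' are essentially what the paper calls \emph{bending} and \emph{twisting} deformations (Examples~\ref{bending} and~\ref{twisting}): both cut $\Gamma_g$ along a simple closed curve $c$ and deform one side by conjugating (or post-composing) by a path in the centralizer of $\rho(c)$. The paper attributes progress on the conjecture via these deformations to \cite{MW}, but gives no details beyond the definitions. Your outline is somewhat more ambitious than what the paper records: you sketch the reduction to the minimal case, dispose of the finite-orbit case, and isolate the ``tight configuration $\Rightarrow$ geometric'' classification as the core difficulty. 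The paper does not go this far --- it only poses the question of whether bending and twisting suffice to connect any two representations in a path-component (Question~\ref{0 bend q} and the question preceding it), without claiming that non-geometric representations always admit a nontrivial such deformation.

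One small point: your handling of the finite-orbit case (``combining with a homomorphism $\Gamma_g \to \Z \to \SO(2)$'') is plausible but not quite complete as stated --- a representation with a finite orbit is semi-conjugate to one factoring through a finite cyclic group, and those \emph{are} geometric in the sense of Definition~\ref{geom def} (image a lattice in $\SO(2)$). So the finite-orbit case is not a case to be ``excluded'' but rather one where the conjecture holds trivially up to semi-conjugacy, consistent with your own caveat that ``geometric'' must be read up to semi-conjugacy.
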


In the next two sections (\ref{rot sec} and \ref{CW sec}) we introduce the tools needed to prove Theorem \ref{invent thm} -- which are useful and beautiful in their own right -- and discuss a number of other applications of these tools.     An outline of the proof of Theorem \ref{invent thm} is given in Section \ref{pf sec}, and a hint at the conjecture in Section \ref{flex sec}.

%-------------------------------
\subsection{Not just surface groups} \label{not just subsec}

From surfaces, one can build many other interesting actions of groups on the circle.   A particularly nice example is the most basic case of the ``universal circle" action, due to Thurston.    Suppose that $\Gamma$ is the fundamental group of a hyperbolic 3-manifold that fibers over the circle, with fiber $\Sigma_g$.   Then $\Gamma$ has a presentation 
$$\Gamma = \langle \Gamma_g, t \mid t \gamma t^{-1} = \phi(\gamma) \rangle$$
for some $\phi \in \Out(\Gamma_g)$.  

If $\rho: \Gamma_g \to \PSL(2,\R)$ is a representation with Euler number $-2g+2$, then it can be extended in a natural way to a representation of $\Gamma$.   To see this, we use the hyperbolic structure on $\Sigma_g$ given by $\rho$.  Let $f$ be a diffeomorphism of $\Sigma_g$ inducing $\phi$ on $\Gamma_g$ (such a diffeomorphism exists since $\Sigma_g$ is a $K(\pi,1)$ space), and lift $f$ to a diffeomorphism $\hat{f}$ of the universal cover $\H^2$ of $\Sigma_g$ that is equivariant with respect to the action of $\Gamma_g$ on $\H^2$.   The diffeomorphism $\hat{f}$ admits a unique extension to a homeomorphism of the circle at infinity $\del \H^2 = S^1$, and we define $\rho(t)$ to be this homeomorphism.   That $f$ is compatible with the action of $\Gamma_g$ on $\H^2$ implies that $\rho$ satisfies the group relations $\rho(t \gamma t^{-1}) = \rho(\phi(\gamma))$, so $\rho$ is well defined.  
Moreover, this extension of $\rho|_{\Gamma_g}$ to a representation $\rho$ of $\Gamma$ is unique.  One way to see this is by considering fixed points of elements $\rho(\gamma)$ for $\gamma \in \Gamma_g$.  The relation $t \gamma t^{-1} = \phi(\gamma)$ implies that $\rho(t)$ necessarily sends the (unique) attracting fixed point of $\rho(\gamma)$ to the attracting fixed point of $\phi(\gamma)$.  Attracting fixed points of elements of $\rho(\Gamma) \subset \PSL(2,\R)$ are dense in $S^1$, so this property determines $\rho(t)$.    Since $\rho$ was \emph{maximal} on $\Gamma_g$, it is fully rigid, and therefore this unique extension is also rigid.   

For readers familiar with the Thurston norm on homology of a 3-manifold (c.f. \cite{Thurston norm}), the construction above can be summarized as follows.  
\begin{theorem}  \label{fibered face thm}
Let $M$ be a hyperbolic 3-manifold that fibers over the circle, and let $\Gamma = \pi_1(M)$.  For each fibered face $F$ of the Thurston norm ball of $H_2(M; \R)$, there is a representation $\rho_F: \Gamma \to \Homeo_+(S^1)$ that is \emph{fully rigid} in $\Hom(\Gamma, \Homeo(S^1))$.
\end{theorem}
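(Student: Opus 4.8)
\emph{Producing the representation.} The plan is to realise each fibered face by a single fibration and then bootstrap full rigidity up from the fiber subgroup. Fix a fibered face $F$ of the Thurston norm ball of $H_2(M;\R)$. By Thurston's theory of the norm (\cite{Thurston norm}), every primitive integral class in the open cone $\R_{>0}\cdot F$ is Poincar\'e dual to a fiber of a fibration $M\to S^1$; pick one such class. Since $M$ is hyperbolic the fiber $\Sigma$ is a surface with $\chi(\Sigma)<0$ (a closed orientable surface of genus $\geq 2$ when $M$ is closed), and $\Gamma=\pi_1(M)$ has a presentation $\Gamma=\langle\,\pi_1(\Sigma),\,t \mid t\gamma t^{-1}=\phi(\gamma)\,\rangle$ with $\phi\in\Out(\pi_1(\Sigma))$ the monodromy. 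Now apply the construction preceding the theorem verbatim: choose a discrete faithful $\beta_0\colon\pi_1(\Sigma)\to\PSL(2,\R)$ of maximal Euler number, realise $\phi$ by a diffeomorphism $f$ of $\Sigma$, lift $f$ to a $\pi_1(\Sigma)$-equivariant diffeomorphism $\hat f$ of $\H^2$, extend $\hat f$ to $\del\H^2=S^1$, and set $\rho_F|_{\pi_1(\Sigma)}=\beta_0$, $\rho_F(t)=\hat f|_{S^1}$. Equivariance of $\hat f$ is precisely the relation $\rho_F(t)\rho_F(\gamma)\rho_F(t)^{-1}=\rho_F(\phi\gamma)$, so $\rho_F\colon\Gamma\to\Homeo_+(S^1)$ is well defined.

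\emph{Reducing rigidity to the fiber subgroup.} Let $\rho'$ lie in the connected component of $\rho_F$ in $\Hom(\Gamma,\Homeo(S^1))$; by Rigidity Definition~\ref{totrig def} it suffices to prove $\rho'$ is semi-conjugate to $\rho_F$. The restriction map $\Hom(\Gamma,\Homeo(S^1))\to\Hom(\pi_1(\Sigma),\Homeo(S^1))$ is continuous, so $\rho'|_{\pi_1(\Sigma)}$ lies in the connected component of $\rho_F|_{\pi_1(\Sigma)}=\beta_0$. As $\beta_0$ is maximal, Corollary~\ref{mat rig cor} (equivalently Theorem~\ref{invent thm} together with Matsumoto's Theorem~\ref{mats thm}) says this component is a single semi-conjugacy class. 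Since a Fuchsian action is minimal, semi-conjugacy to it is witnessed by a monotone degree-one map: there is $h\colon S^1\to S^1$ and a discrete faithful $\beta\colon\pi_1(\Sigma)\to\PSL(2,\R)$ with $h\circ\rho'(\gamma)=\beta(\gamma)\circ h$ for all $\gamma\in\pi_1(\Sigma)$.

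\emph{Propagating the semi-conjugacy to the HNN stable letter.} Conjugation by $\rho'(t)$ is an automorphism of $\rho'(\pi_1(\Sigma))$, hence $\rho'(t)$ preserves the unique minimal set of this group and therefore permutes its complementary gaps — which are exactly the intervals collapsed by $h$. Consequently $\rho'(t)$ respects the fibers of $h$, so there is a well-defined homeomorphism $g$ of $S^1$ with $h\circ\rho'(t)=g\circ h$. The relation $\rho'(t)\rho'(\gamma)\rho'(t)^{-1}=\rho'(\phi\gamma)$ then forces $g$ to carry the attracting fixed point of $\beta(\gamma)$ to that of $\beta(\phi\gamma)$ for every $\gamma$ with $\beta(\gamma)$ hyperbolic. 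Because those attracting fixed points are dense in $S^1$, this pins down $g$ uniquely, so $g$ is exactly the boundary extension $\rho_\beta(t)$ obtained by running the construction of the first paragraph on $\beta$. Thus $h$ semi-conjugates $\rho'$ to $\rho_\beta\colon\Gamma\to\Homeo_+(S^1)$. Finally, $\beta$ and $\beta_0$ are conjugate by a homeomorphism $\psi$ of $S^1$ (Section~\ref{mostow subsec}), and the same density argument shows $\psi$ intertwines $\rho_\beta(t)$ with $\rho_F(t)$; hence $\rho_\beta$ is conjugate to $\rho_F$, and $\rho'$ is semi-conjugate to $\rho_F$, proving full rigidity.

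\emph{Expected main obstacle.} The delicate step is the third paragraph: one must verify carefully, using the fine structure of semi-conjugacy (the rotation-number machinery of Section~\ref{rot subsec}), that $\rho'(t)$ really does permute the gap system of $h$ — so that $g=h\rho'(t)h^{-1}$ is a genuine homeomorphism — and that the semi-conjugacy relation for $t$ holds with the \emph{same} monotone map $h$ already chosen for $\pi_1(\Sigma)$, rather than some a priori different one; the density of attracting fixed points of $\beta(\pi_1(\Sigma))$ is what ultimately forces this compatibility. A minor additional point is the cusped case, where $\Sigma$ is a punctured surface: there one substitutes the boundary-surface version of the maximality/rigidity input (or reduces to a closed fiber by a standard doubling argument) before running the same scheme.
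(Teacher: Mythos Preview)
Your proposal is correct and follows essentially the same route as the paper: construct $\rho_F$ by extending a maximal Fuchsian representation of the fiber surface group to the HNN stable letter via the boundary extension of an equivariant lift, then deduce full rigidity from full rigidity of the maximal restriction (Corollary~\ref{mat rig cor}) together with the uniqueness of the extension forced by density of attracting fixed points. The paper compresses the entire rigidity argument into the single sentence ``Since $\rho$ was maximal on $\Gamma_g$, it is fully rigid, and therefore this unique extension is also rigid''; your third paragraph spells out precisely the step the paper leaves implicit, namely that the semi-conjugacy on $\pi_1(\Sigma)$ propagates to $t$ via the same density argument.
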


Furthermore, Ghys' differentiable rigidity (Theorem \ref{ghys glob rig}) together with Theorem \ref{mostow circle} 
implies that the component of $\Hom(\Gamma, \Homeo_+(S^1))$ containing $\rho_F$ contains no representation with image in $\Diff^3(S^1)$.   

A typical fibered 3-manifold $M$ fibers over the circle in many different ways, giving different, non semi-conjugate, fully rigid actions of $\pi_1(M)$ on $S^1$.   In the language above, if $F$ and $F'$ are two different fibered faces of the Thurston norm ball, then $\rho_{F}$ and $\rho_{F'}$ are not semi-conjugate.  

The construction has been generalized to give actions of fundamental groups of other 3-manifolds.  Instead of a fibration of $M$ over $S^1$, one can use a \emph{pseudo-Anosov flow} on $M$ (\cite{Barbot}, \cite{Fenley}), a \emph{quasi-geodesic flow} \cite{Calegari GT}, or even certain types of \emph{essential lamination} \cite{CD}, to get analogous faithful actions of $\pi_1(M)$ on $S^1$.   It would be interesting to investigate rigidity of these examples using the techniques described below.

%------------------------------------------------------------------------------------ 
\section{Rotation number theory}  \label{rot sec}

One approach to understanding the topology of $\Hom(\Gamma, G)$  is to look for natural ``coordinates" on the space --
this approach has been very fruitful when $G$ is a linear group.  The perspective we promote here, originally due to Calegari (see e.g. \cite{CW}), is that the representation spaces $\Hom(\Gamma, \Homeo(S^1))$ \emph{also} have natural coordinates, given by \emph{rotation numbers}.  
To motivate our discussion, we begin with a few words on the linear case.  

\boldhead{Motivation: trace coordinates on character varieties.}
It is a well-known fact that linear representations of a group $\Gamma$ over a field of characteristic zero are isomorphic if and only if they have the same characters.  In many situations, this fact can be turned into a system of coordinates on the space of representations of $\Gamma$.   Consider $G = \SL(2,\C)$ as a concrete example.  When $\Gamma$ is a finitely generated group, irreducible representations $\Gamma \to \SL(2,\C)$ are determined up to conjugacy by the traces of \emph{finitely many} elements; so a finite collection of traces can be used to parametrize $\Hom(\Gamma, \SL(2,\C)) / \SL(2,\C)$, at least away from the reducible points.  The more sophisticated algebraic-geometric quotient of $\Hom(\Gamma, \SL(2,\C))$ hinted at in Section \ref{char var subsec} that gives the $\SL(2,\C)$ ``character variety" for $\Gamma$ collapses reducible representations of the same character to points, and so traces really \emph{do} give coordinates on this variety.   
This perspective was originally promoted by Culler and Shalen \cite{CS}, who gave an algebraic compactification of components of the $\SL(2,\C)$ character variety when $\Gamma$ is a hyperbolic 3-manifold group.  
A more recent, and perhaps more direct application of trace coordinates can be found in \cite{Goldman trace}.  Here coordinates are used to study the action of the mapping class group of $\Sigma$ on $\Hom(\pi_1(\Sigma), \SL(2,\C)) / \SL(2,\C)$ in the case where $\Sigma$ is a once punctured torus.  This paper also contains many pictures of ``slices" of the character variety (at least the real points!) drawn in trace coordinates.

\boldhead{Characters for $\Homeo(S^1)$.}
Motivated by the linear case, in order to parametrize representations into $\Homeo(S^1)$ up to conjugacy we should look for a class function on $\Homeo(S^1)$ to play the role of the character.   As trace also captures significant dynamical information (e.g. the trace of an element of $\PSL(2,\C)$ determines the translation length of its action on hyperbolic 3-space), we would hope to find a class function on $\Homeo(S^1)$ that holds some dynamical information as well.  

Fortunately, such a function exists: this is the \emph{rotation number} of Poincar\'e.

\subsection{Poincar\'e's rotation number}  \label{rot subsec}
Let $\Homeo_\Z(\R)$ denote the group of orientation-preserving homeomorphisms of $\R$ that commute with integer translations.  $\Homeo_\Z(\R)$ can be thought of as the group of all lifts of orientation-preserving homeomorphisms of $S^1 = \R/\Z$ to its universal cover $\R$, and fits into a central extension
$$0 \to \Z \to \Homeo_\Z(\R) \to \Homeo(S^1) \to 1$$ 
where the inclusion of $\Z$ is as the group of integer translations.  

\boldhead{Notational remark.} 
It is standard to use the notation $\tilde{g}$ for a lift of $g \in \Homeo(S^1)$ to $\Homeo_\Z(\R)$.   However, occasionally -- especially in later sections -- we will want to emphasize the perspective that $\Homeo_\Z(\R)$ should be considered the primary object, and $\Homeo(S^1)$ its quotient.  In this case will use $g$ to refer to an element of $\Homeo_\Z(\R)$ and $\bar{g}$ its image in $\Homeo(S^1)$.

\begin{definition}[Poincar\'e]  
Let $\tilde{g} \in \Homeo_\Z(\R)$ and $x \in \R$.  The \emph{(lifted) rotation number} $\rott(\tilde{g})$ is given by
$$\rott(\tilde{g}) := \lim \limits_{n \to \infty} \frac{\tilde{g}^n(x)}{n}.$$

For $g \in \Homeo(S^1)$, the $\R/\Z$-valued \emph{rotation number} of $g$ is defined by 
$$\rot(g) := \rott(\tilde{g}) \mod \Z$$  
where $\tilde{g}$ is any lift of $g$ to $\Homeo_\Z(\R)$.  
\end{definition} 

It is a classical result that these limits exist and are independent of choice of point $x$.   An elegant and elementary proof is given in \cite{RT} -- the key is the observation that $\tilde{g} \in \Homeo_\Z(\R)$ implies that 
\begin{align}  \label{bounded eq}
\left| \tilde{g}^n(x) -x - (\tilde{g}^{n}(y) - y) \right| \leq 1
\end{align} 
and that this both implies that the sequence $\frac{\tilde{g}^n(x)}{n}$ is Cauchy, hence converges, \emph{and} that its limit is independent of $x$.  
In fact, this proof applies not only to elements of $\Homeo_\Z(\R)$ but to a wider class of maps.  

\begin{definition}[Monotone maps] \label{monotone def}
A \emph{monotone map} of $\R$ is a (not necessarily continuous) map $f: \R \to \R$ satisfying 
\begin{enumerate}[i)]
\item \emph{$\Z$-periodicity}: $f(x + 1) = x+1$ for all $x \in \R$ and 
\item  \emph{monotonicity}: $x \leq y \iff f(x) \leq f(y)$, for all $x, y \in \R$.
\end{enumerate}  
\end{definition} 
\noindent Perhaps these would be better called \emph{$\Z$-periodic monotone maps} so as to emphasize both conditions.  We hope the reader will forgive us for choosing the shorter name, and keep condition i) in mind.

As advertised,  rotation number is a class function that captures a wealth of dynamical information. 
The following properties (perhaps with the exception of continuity) are relatively easy to check just using the definition. 

\begin{proposition}[First properties of rot]  \label{rot prop}
Let $g \in \Homeo_\Z(\R)$. 
\begin{enumerate} 
\item Let $T_{\lambda}$ denote translation by $\lambda$.  Then $\rott(T_\lambda) = \lambda$.  
\item (Homogeneity) $\rott(g^n) = n\rott(g)$.
\item (Conjugacy invariance) $\rott(hgh^{-1}) = \rott(g)$ for any $h \in \Homeo_\Z(\R)$
\item (Quasi-additivity) $| \rott(fg)- \rott(f) -\rott(g) | \leq 2$
\item (Continuity) $\rott$ is a continuous function. 
\item (Periodic points) $\rott(g) = 0$ if and only if $g$ has a fixed point.   More generally, $\rott(g) = p/q \in \Q$ if and only if there exists $x$ such that $g^q(x) = x+p$, in this case the projection of $g$ to $\Homeo(S^1)$ has a periodic orbit of period $q$.  
\end{enumerate}
These statements also hold for $\rot(g)$ when $g \in \Homeo(S^1)$ and 1-3 are easily seen to hold when $g$ is a monotone map.  
\end{proposition}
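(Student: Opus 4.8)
The plan is to deduce all six properties from two ingredients already available: the displacement inequality \eqref{bounded eq}, and a single workhorse estimate bounding $\rott$ by the extreme displacements of its argument. Throughout, for $g \in \Homeo_\Z(\R)$ write $u_g(x) := g(x) - x$; this is continuous and $1$-periodic (for a monotone map, merely $\Z$-periodic), so it attains a minimum $c_g$ and a maximum $C_g$, and \eqref{bounded eq} with $n = 1$ gives $C_g - c_g \leq 1$. The workhorse lemma states that $c_g \leq \rott(g) \leq C_g$; its proof is a one-line induction, since applying $g(y) \geq y + c_g$ at the points $y = g^k(x)$ and using that $g$ is increasing yields $g^n(x) \geq x + n c_g$, so $\rott(g) = \lim_n g^n(x)/n \geq c_g$, and symmetrically for the upper bound. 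This lemma holds verbatim for monotone maps.

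With this in hand, parts (1)--(4) and (6) are quick. Part (1) is immediate since $T_\lambda^n(x) = x + n\lambda$. For (2), $\rott(g^n) = \lim_k g^{nk}(x)/k = n \lim_k g^{nk}(x)/(nk) = n \rott(g)$, the last step because $\{m = nk\}$ is a subsequence realizing the same limit. For (3), write $(hgh^{-1})^n(x) = h\bigl(g^n(h^{-1}(x))\bigr)$; as $y \mapsto h(y) - y$ is bounded, $(hgh^{-1})^n(x)$ and $g^n(h^{-1}(x))$ stay within a constant of each other, so dividing by $n$ gives the common limit $\rott(g)$ (for monotone maps one takes $h$ a bijection, hence an element of $\Homeo_\Z(\R)$). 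For (4), note $u_{fg}(x) = u_f(g(x)) + u_g(x)$, so $c_{fg} \geq c_f + c_g$ and $C_{fg} \leq C_f + C_g$; then the workhorse lemma places $\rott(fg)$, and also $\rott(f) + \rott(g)$, inside the interval $[c_f + c_g,\ C_f + C_g]$, which has length at most $2$, giving quasi-additivity.

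For (6): first, $\rott(g) = 0$ iff $g$ has a fixed point. If $g(x_0) = x_0$ then $g^n(x_0)/n \to 0$; conversely, if $g$ is fixed-point free then $u_g$ is continuous and nowhere zero, hence of constant sign, so either $c_g > 0$ or $C_g < 0$, and the workhorse lemma forces $\rott(g) \neq 0$. For the rational case, use that integer translations are central in $\Homeo_\Z(\R)$ (they are the kernel of the central extension above): hence $(T_{-p}\, g^q)^n(x) = g^{qn}(x) - np$, so by homogeneity $\rott(T_{-p}\, g^q) = q\,\rott(g) - p$. Therefore $\rott(g) = p/q$ iff $T_{-p}\, g^q$ has rotation number $0$ iff it has a fixed point iff $g^q(x) = x + p$ for some $x$; projecting to $S^1$, such an $x$ has period dividing $q$, and exactly $q$ when $p/q$ is in lowest terms. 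Finally, the passage of (1)--(6) to $\rot$ on $\Homeo(S^1)$ is routine: two lifts of $g \in \Homeo(S^1)$ differ by a central integer translation, under which $\rott$ changes by that integer, so each statement descends to $\R/\Z$, with continuity inherited because $\Homeo_\Z(\R) \to \Homeo(S^1)$ is a quotient map.

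\textbf{The main obstacle is part (5), continuity}, which genuinely requires more than the definition. The key is the sharpened estimate $\bigl| \rott(g) - \tfrac{1}{n}(g^n(x) - x) \bigr| \leq \tfrac{1}{n}$ for every $x$ and every $n \geq 1$. This comes from observing that $a_n := g^n(x) - x$ is almost additive: applying \eqref{bounded eq} to the points $g^q(x)$ and $x$ gives $|a_{p+q} - a_p - a_q| \leq 1$, so $a_n + 1$ is subadditive and $a_n - 1$ is superadditive, and Fekete's lemma pins $\rott(g) = \lim a_n / n$ between $a_n/n - 1/n$ and $a_n/n + 1/n$. Granting this, continuity at $g$ follows: pick $n$ with $1/n < \varepsilon/4$; if $g'$ is uniformly close to $g$ then $(g')^n$ is uniformly close to $g^n$ (composition is continuous in the uniform topology), hence $|((g')^n(x) - x)/n - (g^n(x) - x)/n| < \varepsilon/2$, and the three estimates combine to give $|\rott(g') - \rott(g)| < \varepsilon$. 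One should emphasize that this does not make $\rott$ locally constant --- the delicacy is concentrated at rational values, where an arbitrarily small perturbation can break a periodic orbit and push the rotation number off the rational --- but it is more than enough for continuity.
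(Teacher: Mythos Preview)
Your proof is correct and more complete than the paper's. The paper only proves part (4) explicitly, via a normalization trick: compose with integer translations so that $f(0), g(0) \in [0,1)$, then observe inductively that $0 \leq f^n(0) < n$ (hence $\rott(f) \in [0,1]$) and similarly $\rott(fg) \in [0,2]$, which gives the bound of $2$. The other parts are left as easy consequences of the definition, and continuity is explicitly flagged as the exception.

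Your approach is genuinely different in organization: you isolate the displacement function $u_g$ and the workhorse bound $c_g \leq \rott(g) \leq C_g$, then derive (4) from $u_{fg} = u_f \circ g + u_g$ rather than from a pointwise normalization. The payoff is uniformity --- the same lemma drives (4), the fixed-point direction of (6), and feeds into (5) --- and you actually supply the continuity argument the paper omits, via the almost-additivity $|a_{p+q} - a_p - a_q| \leq 1$ and Fekete. The paper's trick is slightly slicker for (4) alone (no min/max needed, just iterate $0 \leq f^n(0) < n$), but your framework is the one that scales to all six parts without ad hoc steps.
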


As an example of using the definition, we give a quick proof of quasi-additivity\footnote{With a more involved argument, it is possible to reduce the bound from 2 to 1 -- we'll see this as a consequence of Theorem \ref{CW thm} below}.

\begin{proof}[Proof of quasi-additivity]
By composing with integer translations, which commute with $f$ and $g$ and do not affect the inequality, we may assume without loss of generality that $f(0) \in [0,1)$ and $g(0) \in [0, 1)$.  This gives $0 \leq fg(0) <  2$.  Also, using $x = 0$ in the definition of $\rott$ gives 
the estimates $\rott(f) \in [0,1]$, $\rott(g) \in [0, 1]$, and $\rott(fg)\in [0, 2]$, from which the desired inequality follows. 
\end{proof}

Although less obvious than the statement about periodic points in Proposition \ref{rot prop}, we can still extract dynamical information from the rotation number when $\rott(g) \notin \Q$.  For this, we need to (as promised, finally!) properly introduce the notion of \emph{semi-conjugacy}.  Semi-conjugacy will play a central role in the rest of this paper -- in fact, we hope to convince the reader that it is the ``right" notion of equivalence for actions by homeomorphisms on the circle.

\subsection{Semi-conjugacy}   \label{SC subsec}
In section \ref{intro rig sec}, we loosely defined a semi-conjugacy to be a kind of ``conjugacy with Denjoy-ing allowed."  We now make this idea precise, eventually giving two equivalent definitions. The first, due to Ghys, comes from the beautiful insight that things are much simpler when lifted to $\R$.

\begin{definition}[Semi-conjugacy \`a la Ghys]  \label{ghys sc def}
Let $\Gamma$ be any group. Two representations $\rho_1$, $\rho_2: \Gamma \to \Homeo_\Z(\R)$ are \emph{semi-conjugate} if there exists a monotone map $h: \R \to \R$ such that 
$$h \rho_1(\gamma) (x) = \rho_2(\gamma) h(x) \,\, \text{ for all } x \in \R \text{ and } \gamma \in \Gamma$$
\end{definition}

\noindent The  map $h$ is called a \emph{semi-conjugacy} between $\rho_1$ and $\rho_2$.

Building on this, we say that representations $\rho_1$ and $\rho_2: \Gamma \to \Homeo(S^1)$ are semi-conjugate if there exists a central extension $\hat{\Gamma}$ of $\Gamma$, and semi-conjugate representations $\hat{\rho}_1$ and $\hat{\rho}_2: \hat{\Gamma} \to \Homeo_\Z(\R)$ such that the following diagrams commute for $i = 1, 2$
\begin{displaymath}
    \xymatrix @M=4pt {
  0 \ar[r] & \Z \ar[r] \ar@{=}[d]& \hat{\Gamma} \ar[d]_{\textstyle \hat{\rho}_i} \ar[r] & \Gamma \ar[d]_{\textstyle \rho_i} \ar[r] &1 \\
   0 \ar[r] & \Z \ar[r] & \Homeo_\Z(\R) \ar[r] & \Homeo(S^1) \ar[r] &1
        }
\end{displaymath}
Note that $\hat{\Gamma}$ is uniquely defined, it is the pullback of the central extension 
$\Z \to \Homeo_\Z(\R) \to \Homeo(S^1)$
by $\rho_1$ (and also the pullback by $\rho_2$ -- the definition of semi-conjugacy implies in particular that these pullbacks are isomorphic as extensions of $\Gamma$).

\begin{remark}[A word of warning]
Definition \ref{ghys sc def} appears in \cite{Ghys Lef}, but with inconsistent use of pullbacks to $\Homeo_\Z(\R)$, an unfortunate little mistake which has caused quite a bit of confusion!  To see what goes wrong when one fails to pull back, take $h: \R \to \R$ to be the floor function $h(x) = \lfloor x \rfloor$.  This is a monotone map, and it descends to the constant map $\bar{h}(x) = 0$ on $S^1 = \R/\Z$.  Now any representation $\rho: \Gamma \to \Homeo(S^1)$ appears to be ``semi-conjugate"\footnote{of course, not with the correct definition of semi-conjugacy!} to the trivial representation via $\bar{h}$, as 
$$0 = \bar{h} \rho(\gamma) (x) = \bar{h}(x) \text{ for all } x \in S^1$$
However, if we instead take a (pullback) representation $\hat{\rho}$ to $\Homeo_\Z(\R)$, the situation is different.  Suppose that $\hat{\rho}$ is semi-conjugate to a pullback $\hat{\rho}_{\mathrm{triv}}$ of the trivial representation via $h$.  Up to multiplying by an element of the center of $\hat{\Gamma}$, each $\gamma \in \hat{\Gamma}$ satisfies 
$\hat{\rho}_{\mathrm{triv}}(\gamma) = \id$. In this case, we have 
$$\lfloor \hat{\rho}(\gamma)(x) \rfloor = \lfloor x \rfloor \text{ for all } x \in \R.$$ 
In particular, taking $x \in \Z$ gives that $\hat{\rho}(\gamma)^{\pm 1}(x) \geq x$, which implies that $x$ is fixed by $\hat{\rho}(\gamma)$.  Thus, $\rho(\Gamma)$ fixes $0 \in S^1$.  With some work, this argument can be improved to show generally that a representation $\rho: \Gamma \to \Homeo(S^1)$ is semi-conjugate to the trivial representation if and only if it has a global fixed point.  
\end{remark}

One of the benefits of lifting to the line is that it makes it easy to show semi-conjugacy is an equivalence relation.  

\begin{proposition} \label{equiv prop}
Semi-conjugacy is an equivalence relation
\end{proposition}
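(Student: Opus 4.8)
The plan is to verify the three defining properties of an equivalence relation directly from Definition \ref{ghys sc def}, working first at the level of representations into $\Homeo_\Z(\R)$ and then transferring to $\Homeo(S^1)$ via the pullback formalism. The key observation throughout will be that monotone maps in the sense of Definition \ref{monotone def} are closed under composition, and that the identity map is a monotone map.

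First I would treat \emph{reflexivity}: the identity $\id: \R \to \R$ is trivially $\Z$-periodic and monotone, and $\id \circ \rho(\gamma) = \rho(\gamma) \circ \id$ for all $\gamma$, so $\rho$ is semi-conjugate to itself. Next, \emph{transitivity}: if $h_1$ semi-conjugates $\rho_1$ to $\rho_2$ and $h_2$ semi-conjugates $\rho_2$ to $\rho_3$, then $h_2 \circ h_1$ is again $\Z$-periodic (since $h_2(h_1(x+1)) = h_2(h_1(x)+1) = h_2(h_1(x))+1$) and monotone (composition of monotone maps is monotone), and the intertwining relations compose: $(h_2 h_1)\rho_1(\gamma) = h_2 \rho_2(\gamma) h_1 = \rho_3(\gamma)(h_2 h_1)$. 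The delicate case is \emph{symmetry}, since a monotone map $h$ need not be invertible — it may collapse intervals to points or have jump discontinuities. The hard part will be producing, from a semi-conjugacy $h$ from $\rho_1$ to $\rho_2$, a monotone map $g$ going the other way. The natural candidate is a ``pseudo-inverse'' of $h$, for instance $g(y) = \sup\{x : h(x) \le y\}$ (or the analogous infimum); one must check that this $g$ is $\Z$-periodic and monotone, and — crucially — that it still intertwines $\rho_2$ with $\rho_1$. The intertwining check is where care is needed: because $\rho_1(\gamma)$ and $\rho_2(\gamma)$ are genuine homeomorphisms (monotone \emph{bijections}), one can push the sup/inf through them, using that $h \rho_1(\gamma) = \rho_2(\gamma) h$ rearranges to $\rho_1(\gamma)^{-1} h^{-1}(\cdot) = h^{-1}\rho_2(\gamma)^{-1}(\cdot)$ at the level of preimages. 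I would verify $g \rho_2(\gamma) = \rho_1(\gamma) g$ by unwinding the definition of $g$ and using the homeomorphism property of the $\rho_i(\gamma)$ to commute them past the supremum.

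Finally, to handle representations into $\Homeo(S^1)$, I would note that the pullback central extension $\hat\Gamma$ is canonically associated to $\rho_i$ (as remarked after Definition \ref{ghys sc def}), so reflexivity is immediate, and for transitivity/symmetry one checks that the commuting-diagram condition is preserved under composition and pseudo-inversion of the lifted semi-conjugacies — this is formal once the $\Homeo_\Z(\R)$ case is settled, since all three representations share the same $\hat\Gamma$. I expect the symmetry step, specifically the verification that the pseudo-inverse genuinely intertwines the actions (rather than merely being monotone), to be the main technical obstacle; everything else is routine bookkeeping with the two conditions of Definition \ref{monotone def}.
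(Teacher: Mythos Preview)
Your proposal is correct and matches the paper's proof essentially line for line: reflexivity via the identity, transitivity via composition of monotone maps, and symmetry via the pseudo-inverse $g(y) = \sup\{x : h(x) \le y\}$, with the intertwining verified by pushing the supremum through the homeomorphisms $\rho_i(\gamma)$. The reduction of the $\Homeo(S^1)$ case to the $\Homeo_\Z(\R)$ case via uniqueness of the pullback $\hat\Gamma$ is likewise exactly what the paper does.
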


\begin{proof}(Following \cite[Prop 2.1]{Ghys Lef}).
It suffices to prove that semi-conjugacy is an equivalence relation on representations to $\Homeo_\Z(\R)$, since in the case of representations to $S^1$, the extension $\hat{\Gamma}$ is uniquely defined.  

For representations to $\Homeo_\Z(\R)$, reflexivity of semi-conjugacy is immediate, and transitivity an easy exercise.  For symmetry, suppose that $\rho$ and $\rho': \Gamma \to \Homeo_\Z(\R)$ satisfy $\rho(\gamma) h = h \rho'(\gamma)$ for some $h: \R \to \R$ as in Definition \ref{ghys sc def}.  Define $h': \R \to \R$ by 
$$h'(x) = \sup \{y \in \R \mid h(y) \leq x \}.$$
Then $h'(x+1) = \sup \{y \in \R \mid h(y-1) \leq x \} = h'(x) + 1$; and $x_1 \leq x_2$ implies that $h'(x_1) \leq h'(x_2)$, so $h$ is monotone.  
Moreover, by construction we have 
\begin{align*}
h' \rho(\gamma)(x)  &= \sup \{ y \in \R \mid h(y) \leq \rho(\gamma)(x) \} \\
&= \sup \{y= \rho'(\gamma)(z) \in \R \mid h(z) \leq x \}, \text{ since } h \text{ is order preserving} \\
&= \rho'(\gamma)h'(x)
\end{align*}
so $h'$ is a semi-conjugacy between $\rho'$ and $\rho$.
\end{proof}

\boldhead{Semi-conjugacy and minimal actions.} 
The next definition of semi-conjugacy is due to Calegari and Dunfield, originally appearing in \cite[Def. 6.5, Lem. 6.6]{CD}.  
It avoids lifting to $\R$ by describing semi-conjugate representations as those having a kind of ``common reduction".  

\begin{definition}[Degree one monotone map]
A map $h: S^1 \to S^1$ is called a \emph{degree one monotone map} if it admits a continuous lift $\tilde{h}: \R \to \R$ that is \emph{monotone} as in Definition \ref{monotone def}.  
\end{definition}

\begin{definition}[Semi-conjugacy  \`a la Calegari--Dunfield] \label{C-D def} 
Two representations $\rho_1$ and $\rho_2$ in $\Hom(\Gamma, \Homeo_+(S^1))$ are semi-conjugate if and only if there is a third representation $\bar{\rho}: \Gamma \to \Homeo_+(S^1)$ and degree one monotone maps $h_1$ and $h_2$ of $S^1$ such that 
$$\rho_i \circ h_i = h_i \circ \bar{\rho}.$$  
\end{definition}

Moreover, if $\rho_1$ has a finite orbit, then $\bar{\rho}$ can be taken to be conjugate to an action by rotations and $h_1$ a (discontinuous) map with image the finite orbit of $\rho_1$.  In this case, $\rho_2$ necessarily has a finite orbit as well.  
Otherwise, $\bar{\rho}$ can be taken to be a \emph{minimal} action, i.e. with all orbits dense; and $h_i$ a map collapsing each wandering interval for the action of $\rho_i$ to a point.  (See \cite[Prop 5.6]{Ghys Ens} for more justification of this description).  
In this way, one thinks of $\bar{\rho}$ as capturing the essential dynamical information of $\rho_i$.  

This description also emphasizes the fact that any semi-conjugacy between two minimal actions is a genuine conjugacy, and it can be used to show that semi-conjugacy classes are \emph{connected}.

\boldhead{Rotation numbers detect semi-conjugacy.} 
Now we may finally complete our list of properties of the rotation number.  The following proposition is essentially a theorem of Poincar\'e, a proof and further discussion can be found in \cite[Sect. 5.1]{Ghys Ens}.  

\medskip
\noindent \textbf{Proposition \ref{rot prop}, continued} (Further properties of rot).
\begin{enumerate}
\item[5.]  For $g \in \Homeo(S^1)$, $\rot(g)$ is irrational if and only if $g$ is semi-conjugate to an irrational rotation.  This semi-conjugacy is actually a conjugacy if $g$ has dense orbits.  
\item[6.]
 (Detects semi-conjugacy) More generally, $f$ and $g$ in $\Homeo_\Z(\R)$ are semi-conjugate if and only if $\rott(f) = \rott(g)$.
\end{enumerate} 

Property 6 says that rotation number is a complete invariant of semi-conjugacy classes in $\Homeo_\Z(\R)$ or $\Homeo(S^1)$. In other words, an action of $\Z$ is determined up to semi-conjugacy by the rotation number of a generator.  In Section \ref{coord subsec} we will see how to generalize this to actions of other groups, using rotation numbers as ``coordinates" on the quotient of $\Hom(\Gamma, \Homeo(S^1))$ by semi-conjugacy.   But first, we specialize again to surface groups, and return to the definition of the Euler number.

%-----------------------------
\subsection{The Euler number as a rotation number}  \label{euler subsec}

The goal of this section is to give two concrete descriptions of the Euler number of a circle bundle over a surface.  The first, using rotation numbers, is specific to flat bundles and one of the main ingredients in Milnor's (and Wood's) original proofs of the Milnor--Wood inequality.   The second definition is classical and obstruction-theoretic; our description is intended to make it easy to reconcile the two.  

Consider the standard presentation of a surface group $\Gamma_g = \langle a_1, b_1, ... a_g, b_g \mid \prod_i [a_i, b_i] \rangle.$
We will exploit the fact that this group has a single relator, made up of commutators, to assign an integer to each representation $\rho: \Gamma_g \to \Homeo(S^1)$.  
The reason commutators are useful is that they have \emph{cannonical lifts} to $\Homeo_\Z(\R)$ -- for any $f$ and $g \in \Homeo(S^1)$ with lifts $\tilde{g}$ and $\tilde{f} \in \Homeo_\Z(\R)$, the homeomorphism $[\tilde{f}, \tilde{g}]$ is independent of the choice of lifts.  
We'll use the notation $\rott[f,g]: = \rott([\tilde{f}, \tilde{g}])$, and similarly $\rott\left( [f,g][h, k] \right): = \rott ( [\tilde{f}, \tilde{g}] [\tilde{h}, \tilde{k}] )$ etc. for products.  

The following definition is implicit in \cite{Milnor} and used explicitly in \cite{Wood}.  
\begin{definition}  \label{eu def}
The \emph{Euler number} of a representation $\rho: \Gamma_g \to \Homeo(S^1)$ is the integer
$$\euler(\rho): = \rott \left( \prod_{i=1}^g [\rho(a_i), \rho(b_i)] \right).$$
\end{definition}

Since $\rott$ is continuous, it follows immediately that $\euler$ is continuous on $\Hom(\Gamma_g, \Homeo(S^1))$.  
Of course, this definition only makes sense for flat bundles.  Construction \ref{classical eu} below gives a definition applicable to arbitrary circle bundles over surfaces.  (See also Definition \ref{eu def 1}).  

\begin{construction}[Euler number as an obstruction class]  \label{classical eu}

Build $\Sigma_g$ by taking a single 0-cell $x$, attaching a loop to $x$ for each generator, and then gluing the boundary of a 2-cell to the loop based at $x$ corresponding to the word $\prod [a_i, b_i]$.  Cutting this complex along the 1-skeleton gives the familiar picture of $\Sigma_g$ as a $4g$-gon with sides identified.

Given an $S^1$ bundle $E$ over $\Sigma_g$, there is no obstruction to building a section over the 0-cell, or extending this to a section over the 1-skeleton since $S^1$ is connected.   However, there \emph{is} an obstruction to extending the section over the 2-cell.  
The easiest way to describe this obstruction is to pull back the bundle to an $S^1$ bundle over the universal cover $\tilde{\Sigma}_g \to \Sigma_g$.  Since $\tilde{\Sigma}_g$ is contractible, the pull-back bundle trivializes as $\tilde{\Sigma}_g \times S^1$.  A section $\sigma$ over the 1-skeleton of $E$ corresponds to a section of $\tilde{\Sigma}_g \times S^1$ over the 1-skeleton that is \emph{equivariant} with respect to the action 
of $\pi_1(\Sigma_g)$.   The restriction of such a section to the boundary of a 2-cell in $\tilde{\Sigma_g}$ gives a map $S^1 \to \tilde{\Sigma}_g \times S^1$, and projection to the $S^1$ factor a map $S^1 \to S^1$.  The winding number of this map is the only obstruction to extending it over the 2-cell, and is exactly the Euler number of the bundle.  
\end{construction}

Now our description of a \emph{flat} bundle as a quotient $(\tilde{\Sigma}_g \times S^1)/ \pi_1(\Sigma_g)$ should make it possible to reconcile Construction \ref{classical eu} with Definition \ref{eu def}.  We leave this as an exercise for the reader.   
The truly enthusiastic can also try to reconcile Construction \ref{classical eu} with Definition \ref{eu def 1} (interpreted the right way, the construction gives a means of assigning integers to 2-cells, so is a 2-cocycle...)

%-----------------------------
\subsection{Rotation numbers as coordinates}  \label{coord subsec} 

In \cite{Ghys Lef}, Ghys gave a cohomological condition on representations in $\Hom(\Gamma, \Homeo(S^1))$ equivalent to the representations being semi-conjugate\footnote{Ghys' condition is that the representations determine the same \emph{bounded integer Euler class} in $H^2_b(\Gamma; \Z)$, we'll say a bit more about this in Section \ref{h2b subsec}}.  
Matsumoto later translated this into a statement about rotation numbers.  

Before stating this condition, note that for any two elements $f$, $g \in \Homeo_+(S^1)$ with lifts $\tilde{f}$ and $\tilde{g}$ in $\Homeo_\Z(\R)$, the number 
$$\tau(f,g):= \rott(\tilde{f} \tilde{g}) - \rott(\tilde{f}) -\rott(\tilde{g})$$
 does not depend on the choice of lifts $\tilde{f}$ and $\tilde{g}$.

\begin{theorem}[``Rotation numbers are coordinates" \cite{Ghys Lef}, \cite{Matsumoto num}] \label{rot coord}
Let $\Gamma$ be a group with generating set $S$.  Two representations $\rho$ and $\rho'$ in $\Hom(\Gamma, \Homeo(S^1))$ are semi-conjugate if and only if the following two conditions hold
\begin{enumerate}[i)]
\item $\rot(\rho(s)) = \rot(\rho'(s))$ for each generator $s \in S$ . 
\item $\tau(\rho(\gamma_1), \rho(\gamma_2)) = \tau (\rho'(\gamma_1), \rho'(\gamma_2))$ for each pair of elements $\gamma_1$ and $\gamma_2$ in $\Gamma$.  
\end{enumerate}
\end{theorem}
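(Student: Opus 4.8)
\emph{The plan} is to deduce both directions from Ghys's cohomological criterion recalled above: $\rho$ and $\rho'$ in $\Hom(\Gamma,\Homeo(S^1))$ are semi-conjugate if and only if they determine the same bounded integer Euler class $e_b(\rho)=e_b(\rho')$ in $H^2_b(\Gamma;\Z)$. So the real work is to translate the equality of bounded Euler classes into the numerical conditions (i) and (ii).

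\emph{Semi-conjugate $\Rightarrow$ (i), (ii).} Here I would argue directly, without cohomology. By Definition \ref{ghys sc def} there is a central extension $\hat\Gamma$ of $\Gamma$, lifts $\hat\rho_1,\hat\rho_2\colon\hat\Gamma\to\Homeo_\Z(\R)$, and a monotone $h\colon\R\to\R$ with $h\,\hat\rho_1(\hat\gamma)=\hat\rho_2(\hat\gamma)\,h$ for all $\hat\gamma\in\hat\Gamma$. Since $h$ is $\Z$-periodic, $h(x)-x$ is bounded; applying the intertwining relation to $\hat\rho_1(\hat\gamma)^n$ and dividing by $n$ gives $\rott(\hat\rho_1(\hat\gamma))=\rott(\hat\rho_2(\hat\gamma))$ for every $\hat\gamma$, by the same averaging that makes \eqref{bounded eq} yield a well-defined rotation number. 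Reducing modulo $\Z$ on a lift of a generator gives (i); and since $\tau(\rho_i(\gamma_1),\rho_i(\gamma_2))=\rott(\hat\rho_i(\hat\gamma_1\hat\gamma_2))-\rott(\hat\rho_i(\hat\gamma_1))-\rott(\hat\rho_i(\hat\gamma_2))$ for any lifts $\hat\gamma_1,\hat\gamma_2$ of $\gamma_1,\gamma_2$ (a legitimate formula, since $\tau$ does not depend on the lifts), each of the three terms is independent of $i$, giving (ii).

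\emph{(i), (ii) $\Rightarrow$ semi-conjugate.} For each $\gamma$ let $\sigma_\rho(\gamma)$ be the lift of $\rho(\gamma)$ with $\sigma_\rho(\gamma)(0)\in[0,1)$, and define the bounded $\Z$-valued $2$-cocycle $c_\rho(\gamma_1,\gamma_2)$ by $\sigma_\rho(\gamma_1)\sigma_\rho(\gamma_2)=\sigma_\rho(\gamma_1\gamma_2)\,T_{c_\rho(\gamma_1,\gamma_2)}$, which represents $e_b(\rho)$; similarly for $\rho'$. Put $\mu_\rho(\gamma):=\rott(\sigma_\rho(\gamma))$; the estimate $|\rott(\tilde g)-(\tilde g(0)-0)|\le 1$ shows $\mu_\rho$ is a \emph{bounded} real $1$-cochain, and a short computation using $\rott(\tilde g\,T_m)=\rott(\tilde g)+m$ gives
\[
c_\rho=\delta\mu_\rho+\tau_\rho,\qquad \tau_\rho(\gamma_1,\gamma_2):=\tau(\rho(\gamma_1),\rho(\gamma_2)),
\]
and likewise $c_{\rho'}=\delta\mu_{\rho'}+\tau_{\rho'}$. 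Now (ii) says $\tau_\rho=\tau_{\rho'}$ as cochains, so $c_\rho-c_{\rho'}=\delta(\mu_\rho-\mu_{\rho'})$ has a \emph{bounded} coboundary, hence $e_b(\rho)-e_b(\rho')$ vanishes in $H^2_b(\Gamma;\R)$. By the coefficient sequence $0\to\Z\to\R\to\R/\Z\to0$ together with $H^1_b(\Gamma;\R)=0$, the class $e_b(\rho)-e_b(\rho')\in H^2_b(\Gamma;\Z)$ is the image of a homomorphism $\Gamma\to\R/\Z$, namely $\gamma\mapsto\rot(\rho(\gamma))-\rot(\rho'(\gamma))$ (this difference is a homomorphism precisely because $\delta(\mu_\rho-\mu_{\rho'})$ is integer-valued, even though $\rot$ is not a homomorphism individually). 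Condition (i) makes it vanish on a generating set, hence identically; so $e_b(\rho)=e_b(\rho')$, and Ghys's criterion closes the argument.

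The step I expect to be the genuine obstacle is exactly this passage from $\R$- to $\Z$-coefficients: condition (ii) alone only pins down the \emph{real} bounded Euler class, and one must recognize that the remaining ambiguity lies in $\Hom(\Gamma,\R/\Z)$ and is removed precisely by the rotation numbers of the generators — which is the structural reason \emph{both} conditions are needed. (One could instead bypass Ghys's criterion and construct the semi-conjugacy by hand, as the monotone extension to $\R$ of the orbit map $\hat\rho_1(\hat\gamma)(0)\mapsto\hat\rho_2(\hat\gamma)(0)$; but then the hard part becomes showing that (i) and (ii) determine the linear order of the $\hat\rho_1$-orbit of $0$, which is the same content repackaged as a statement about circular orders.)
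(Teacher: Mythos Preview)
Your proof is correct, but it takes a genuinely different route from the paper's. You invoke Ghys's bounded cohomology criterion as a black box and show that conditions (i) and (ii) force $e_b(\rho)=e_b(\rho')$ in $H^2_b(\Gamma;\Z)$, carefully separating the real and integral parts via the coefficient sequence. This is essentially Matsumoto's original strategy, which the paper explicitly notes: ``Matsumoto's proof consists in showing that the conditions i) and ii) given above imply that $\rho$ and $\rho'$ satisfy the cohomological condition given by Ghys.'' The paper, by contrast, gives a self-contained, cohomology-free argument: it chooses lifts $\tilde s,\tilde s'$ of the generators with equal $\rott$ (using (i)), uses (ii) inductively to get $\rott(\tilde s_{i_1}\cdots\tilde s_{i_n})=\rott(\tilde s'_{i_1}\cdots\tilde s'_{i_n})$ for all words, identifies the two pullback extensions $\hat\Gamma$ directly, and then writes down the semi-conjugacy by the explicit formula $h(x)=\sup_{\gamma\in\hat\Gamma}\hat\rho(\gamma)^{-1}\hat\rho'(\gamma)(x)$, checking by hand that the sup is finite (via quasi-additivity) and equivariant.

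What each approach buys: yours cleanly isolates \emph{why} both conditions are needed---(ii) pins down the real bounded class, (i) kills the $\Hom(\Gamma,\R/\Z)$ ambiguity---and this conceptual separation is genuinely illuminating. The paper's approach is more elementary (no long exact sequence, no appeal to an unproved criterion) and produces the semi-conjugacy concretely; its explicit $h$ is also closer to the alternative you sketch in your final parenthetical, though the paper uses a sup over the group rather than extending an orbit map. You also supply the easy ``only if'' direction, which the paper leaves implicit.
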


Matsumoto's proof consists in showing that the conditions i) and ii) given above imply that $\rho$ and $\rho'$ satisfy the cohomological condition given by Ghys\footnote{The key observation is that $\tau$ is a cocycle representative for the bounded \emph{real} Euler class.}.
We give a self-contained proof in the spirit of Ghys, but without reference to cohomology.  However, the reader familiar with $H^2_b$ should be able to find it lurking in the background.

\begin{proof}[Proof of Theorem \ref{rot coord}]
Suppose that $\rho$ and $\rho'$ satisfy the conditions of Theorem \ref{rot coord}.    
For each generator $s$ of $\Gamma$, pick a lift $\tilde{s}$ of $\rho(s)$ and $\tilde{s}'$ of $\rho(s')$ to $\Homeo_\Z(\R)$ such that 
$$\rott(\tilde{s}) =  \rott(\tilde{s}'),$$
this is possible by condition i).  
Note that condition ii) implies (inductively) that for any string of generators $s_1s_2 ... s_n$ we have
\begin{equation} \label{rot eq}
\rott(\tilde{s_1}\tilde{s_2}...\tilde{s_n}) =  \rott(\tilde{s_1}'\tilde{s_2}'...\tilde{s_n}').
\end{equation}
Let $\hat{\Gamma}$ denote the pullback $\rho^*(\Homeo_\Z(\R))$.  By definition, $\hat{\Gamma}$ is the subgroup of $\Homeo_\Z(\R) \times \Gamma$ generated by $(\tilde{s}, s)$ and $(T, \id)$ where $T(x) = x+1$; and $\hat{\rho}: \hat{\Gamma} \to \Homeo_\Z(\R)$ is just projection onto the $\Homeo_\Z(\R)$ factor.   Equation \eqref{rot eq} implies that the homomorphism $\phi: \hat{\Gamma} \to \rho'^*(\Homeo_\Z(\R))$, given by 
\begin{align*}
& \phi(\tilde{s}, s) = (\tilde{s}', s'), \text{ for } s \in S \\
& \phi(T, \id) = (T, \id)
\end{align*}
is an isomorphism of central extensions -- if some word $(T^k \tilde{s}_{i_1}...\tilde{s}_{i_n},  s_{i_1}...s_{i_n})$ in the generators of $\hat{\Gamma}$ is trivial, then 
$s_{i_1}...s_{i_n} = \id \in \Gamma$, so $\tilde{s}_{i_1}...\tilde{s}_{i_n}$ is an integer translation, and \eqref{rot eq} implies that $\tilde{s}_{i_1}'...\tilde{s}_{i_n}'$ is the same translation.  Hence $(T^k \tilde{s}_{i_1}'...\tilde{s}_{i_n}',  s_{i_1}...s_{i_n})$ is trivial as well.  

Moreover, the representation $\hat{\rho}': \hat{\Gamma} \to \Homeo_\Z(\R)$ defined by $\hat{\rho}'(\tilde{s}, s) = \tilde{s}'$ gives a commutative diagram
\begin{displaymath}
    \xymatrix @M=4pt {
    0 \ar[r] &\Z \ar[r] \ar@{=}[d]& \hat{\Gamma} \ar[d]_{\textstyle \hat{\rho}'} \ar[r] & \Gamma' \ar[d]_{\textstyle \rho'} \ar[r] & 1 \\
     0 \ar[r] &  \Z \ar[r] & \Homeo_\Z(\R) \ar[r] & \Homeo(S^1) \ar[r] & 1
        }
\end{displaymath}
It remains to construct a semi-conjugacy between $\hat{\rho}$ and $\hat{\rho}'$. 
For $x \in \R$, define 
$$h(x) := \sup_{\gamma \in \hat{\Gamma}} \left\{ \hat{\rho}(\gamma)^{-1} \hat{\rho}'(\gamma)(x) \right\}.$$  
To see that this supremum is always finite, note that \eqref{rot eq} implies that $\rott(\hat{\rho}(\gamma)) = \rot(\hat{\rho}'(\gamma))$, so quasi-additivity applied to $\hat{\rho}(\gamma)^{-1}$ and $\hat{\rho}'(\gamma)$ gives the bound $| \rott(\hat{\rho}(\gamma)^{-1} \hat{\rho}'(\gamma) ) | \leq 2$.  The definition of $\rott$ now implies that 
$x-3 \leq \hat{\rho}(\gamma^{-1}) \hat{\rho}'(\gamma)(x) \leq x+3$
and this holds for any $\gamma \in \hat{\Gamma}$.

As in the proof of Proposition \ref{equiv prop}, it is also easy to check that $h$ is a monotone map, and so we just need to verify that $h$ defines a semi-conjugacy.   Let $\alpha \in \hat{\Gamma}$, then
\begin{align*}
h \hat{\rho}'(\alpha)(x) &= \sup_{\gamma \in \hat{\Gamma}} \left\{ \hat{\rho}(\gamma)^{-1} \hat{\rho}'(\gamma)\hat{\rho}'(\alpha)(x) \right\} \\
			&= \sup_{\gamma \in \hat{\Gamma}} \left\{ \hat{\rho}(\gamma)^{-1} \hat{\rho}'(\gamma \alpha)(x) \right\} \\
			&= \sup_{\beta \in \hat{\Gamma}} \left\{ (\hat{\rho}(\beta \alpha^{-1})^{-1} \hat{\rho}'(\beta)(x) \right\} \\
			&= \hat{\rho}(\alpha) \sup_{\beta \in \hat \Gamma} \left\{ \hat{\rho}(\beta)^{-1} \hat{\rho}'(\beta)(x) \right\} \\
			&= \hat{\rho}(\alpha) h(x)
\end{align*}
which is exactly what we needed to show.  
\end{proof}

\begin{remark} 
Theorem \ref{rot coord} has an alternative formulation and proof that also applies to \emph{semigroups}, due to Golenishcheva-Kutuzova, Gorodetski, Kleptsyn, and Volk \cite{GGKV}.    Since any semi-conjugacy between \emph{minimal} actions is a genuine conjugacy, 
another way to put Theorem \ref{rot coord} is to say that the conjugacy class of a minimal action is determined by rotation numbers of generators and the function $\tau$.   In \cite{GGKV} it is shown that, if both the (positive) semigroup generated by a collection $s_i \in \Homeo_+(S^1)$, \emph{and} the (negative) semigroup generated by the $\{s_i^{-1}\}$ act minimally, then the collection of $s_i$ is determined up to conjugacy by their rotation numbers and the restriction of $\tau$ to the semigroup generated by the $s_i$. (It would be nice not to have to consider the negative semigroup at all, but this hypothesis really is necessary -- see Example 1 in \cite{GGKV}.)
\end{remark}

%-----------------------------
\subsection{Rotation number as a \emph{quasimorphism}}   \label{qm subsec}

Perhaps it sounds obvious when stated this way, but the reason that $\rott$ can capture so much information about homeomorphisms of $S^1$ is because it is \emph{not} a homomorphism.  (In fact, $\Homeo(S^1)$ is a simple group, so any homomorphism to $\R/\Z$ is necessarily trivial.)  Here is an instructive example of how additivity can fail:

\begin{example}
Let $f(x) = \lfloor x \rfloor$ and $g(x) = \lfloor x + 1/2 \rfloor - 1/2$.   Then $f$ and $g$ are monotone maps and each has a fixed point, so $\rott(f) = \rott(g) = 0$.  However 
$fg(0) = -1$ and hence $\rott(fg) = -1$.  

One can modify this example so that $f$ and $g$ are not just monotone maps, but lie in $\Homeo_\Z(\R)$ -- our computation only used the property that $f$ and $g$ both had fixed points, but $fg(x) = x-1$ for some $x \in \R$.  In fact, one can find such $f$ and $g$ that are lifts of hyperbolic elements in $\PSL(2,\R) \subset \Homeo(S^1)$.  
\end{example}

But there is one particular situation where $\rott$ is additive;  we include it here, as it gives a nice reminder of the definition of $\rott$ and because we'll need to use the result later. 
\begin{lemma} \label{additive lem}
Let $f$ and $g \in \Homeo_\Z(\R)$ satisfy $fg = T^k$ where $T^k$ denotes the translation $T^k(x) := x+k$, for $k \in \Z$.   Then $\rott(f) + \rott(g) = k$.  
\end{lemma}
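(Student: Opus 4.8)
The plan is to exploit the defining relation $fg = T^k$ to compute both rotation numbers simultaneously from a single orbit. Since $fg = T^k$, we have $g = f^{-1}T^k = T^k f^{-1}$ (the last equality because $f$, and hence $f^{-1}$, commutes with integer translations). The first step is to use homogeneity (Proposition \ref{rot prop}, item 2): $\rott(g) = \rott(T^k f^{-1})$, and since $T^k$ commutes with $f^{-1}$ we can hope to relate $\rott(T^k f^{-1})$ to $k + \rott(f^{-1}) = k - \rott(f)$. To make this rigorous I would go back to the definition rather than invoke quasi-additivity (which only gives an inequality). Observe that for any $h \in \Homeo_\Z(\R)$ and any $m \in \Z$, the map $T^m h$ satisfies $(T^m h)^n = T^{mn} h^n$ because $T^m$ is central; hence
\[
\rott(T^m h) = \lim_{n\to\infty} \frac{(T^m h)^n(x)}{n} = \lim_{n\to\infty}\frac{mn + h^n(x)}{n} = m + \rott(h).
\]
Applying this with $m = k$ and $h = f^{-1}$ gives $\rott(g) = k + \rott(f^{-1})$.

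The second step is to check $\rott(f^{-1}) = -\rott(f)$. This follows again from the definition: if $y_n := f^n(x)$, then $f^{-n}(y_n) = x$, so writing $z = y_n$ we get $f^{-n}(z)/n = x/n \to 0$ while $f^n(x)/n \to \rott(f)$; combining these along the subsequence of points $y_n$ (and using that the limit in the definition of $\rott$ is independent of the basepoint, by inequality \eqref{bounded eq}) yields $\rott(f^{-1}) = -\rott(f)$. Alternatively, this is immediate from homogeneity applied to $n = -1$, or from the fact that $\id = f f^{-1}$ has rotation number $0$ together with the additivity we are about to establish for commuting maps; I would just cite homogeneity. Putting the two steps together: $\rott(g) = k + \rott(f^{-1}) = k - \rott(f)$, i.e. $\rott(f) + \rott(g) = k$, as claimed.

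I do not expect any serious obstacle here — the lemma is essentially a direct unwinding of the definition of $\rott$, and the only thing to be careful about is that quasi-additivity (item 4 of Proposition \ref{rot prop}) gives only a bound of $2$, not equality, so the argument must instead use the centrality of $T^k$ to get the exact value. The one genuinely substantive input is that $(T^m h)^n = T^{mn} h^n$, which is where the hypothesis $fg = T^k$ (as opposed to $fg$ being a general element with $\rott(fg) = k$) does real work. Everything else is bookkeeping with limits.
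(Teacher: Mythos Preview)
Your proof is correct and essentially identical to the paper's: both rewrite one map as $T^k$ composed with the inverse of the other, use centrality of $T^k$ to get $(T^k h)^n = T^{kn} h^n$, and compute the rotation number directly from the limit definition together with $\rott(h^{-1}) = -\rott(h)$. The only difference is that you solve for $g$ in terms of $f^{-1}$ while the paper solves for $f$ in terms of $g^{-1}$.
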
  

\begin{proof} 
That $fg = T^k$ implies that $f = T^k \circ g^{-1}$, hence 
$$
\rott(f) = \rott(T^k g^{-1}) = \lim \limits_{n \to \infty} \frac{T^{kn} \tilde{g}^{-n}(x)}{n} = \lim \limits_{n \to \infty} \frac{\tilde{g}^{-n}(x)+ kn}{n} = \rott(g^{-1}) + k = -\rott(g) + k.
$$
\end{proof}

Despite not being a homomorphism, $\rott$ is a ``quasimorphism".
A function $r$ from a group $G$ to $\R$ is called a \emph{quasimorphism} if there exists $D \geq 0$ such that
$$|r(f g) - r(f) -r(g) | < D \text{ for all } f, g \in G.$$
You can think of this as saying that $r$ is ``a homomorphism up to bounded error."  The optimal bound $D$ is called the \emph{defect} of $r$.  
The existence of a bound $D$ for the function $\rott$ is exactly the quasi-additivity property of Proposition \ref{rot prop}.  In fact, the following is true.  

\begin{proposition}[Prop 5.4 in \cite{Ghys Ens}] \label{unique qm}
Rotation number is the \emph{unique} quasimorphism from $\Homeo_\Z(\R)$ to $\R$ that is a homomorphism when restricted to one generator groups and that takes the value 1 on translation by 1.  
\end{proposition}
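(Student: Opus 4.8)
Existence is immediate from Proposition~\ref{rot prop}: quasi-additivity (part 4) says $\rott$ is a quasimorphism, homogeneity (part 2) says it restricts to a homomorphism on every cyclic subgroup, and part 1 gives $\rott(T_1)=1$. So the content is uniqueness, and the plan is to show that any quasimorphism $r$ with these properties equals $\rott$. Set $d:=r-\rott$. Then $d$ is again a quasimorphism, it restricts to a homomorphism on every cyclic subgroup --- equivalently (as $\R$ is torsion-free) $d(g^n)=n\,d(g)$ for all $n\in\Z$, i.e. $d$ is \emph{homogeneous} --- and $d(T_1)=0$; the goal is $d\equiv 0$. First I would record the standard softness properties of a homogeneous quasimorphism, all coming from the identity $d(g)=\lim_n\frac1n d(g^n)$: $d$ is conjugation-invariant, $d$ is additive on every abelian subgroup, $|d|$ is bounded by the defect of $d$ on commutators, and $d$ vanishes on any subgroup on which it is bounded. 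Additivity on the abelian subgroup generated by $g$ and $T_1$ together with $d(T_1)=0$ forces $d(T^k)=0$ and $d(T^k g)=d(g)$ for all $k$; since $\langle T_1\rangle$ is exactly the kernel of $\Homeo_\Z(\R)\to\Homeo(S^1)$, this shows $d$ descends to a homogeneous quasimorphism $\bar d$ on $\Homeo(S^1)$, and it suffices to prove $\bar d\equiv 0$. Let $D$ denote its defect.

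The crux is that $\bar d$ vanishes on every element of $\Homeo(S^1)$ with a fixed point. Cutting the circle open at the fixed point identifies such an element with an element $f$ of the group $H$ of orientation-preserving homeomorphisms of $[0,1]$, and I would show $f$ is a single commutator in $H$ in two moves. First, \emph{fragment}: write $f=f_1f_2$ with $f_1=\id$ near $0$ and $f_2=\id$ near $1$ --- take $f_2$ to agree with $f$ on a small $[0,\delta]$ and to be the identity on $[2\delta,1]$, and $f_1:=f f_2^{-1}$. Second, apply the \emph{Mather--Eilenberg swindle} to each $f_i$: choosing $t\in H$ that pushes every interior point toward the endpoint off which $f_i$ is supported, the infinite composite $F_i:=f_i\,(t f_i t^{-1})\,(t^2 f_i t^{-2})\cdots$ converges to a homeomorphism of $[0,1]$ satisfying $F_i=f_i\cdot tF_it^{-1}$, whence $f_i=[F_i,t]$. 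Then $|\bar d(f_i)|\le D$, so $\bar d$ is bounded (by $3D$) on the subgroup of homeomorphisms of $S^1$ fixing a given point; being homogeneous it vanishes there, and by conjugation-invariance it vanishes on every homeomorphism with a fixed point.

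Finally I would observe that \emph{every} $g\in\Homeo(S^1)$ is a product $g=g_1g_2$ of two homeomorphisms each having a fixed point: pick distinct $p,q$ with $g^{-1}(q)\neq p$, choose $g_2$ fixing $p$ with $g_2(g^{-1}(q))=q$ --- possible since $g^{-1}(q)$ and $q$ both lie in the arc $S^1\setminus\{p\}$, on which an orientation-preserving homeomorphism can carry any point to any point --- and set $g_1:=g g_2^{-1}$, which fixes $q$. Hence $|\bar d(g)|\le D$ for all $g$, so the homogeneous quasimorphism $\bar d$ is bounded, therefore $\bar d\equiv 0$, and so $d\equiv 0$, i.e. $r=\rott$. (The work of the last two paragraphs amounts to the fact that $\Homeo(S^1)$ is uniformly perfect, which one could alternatively quote and plug directly into the softness properties of $\bar d$.)

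The one genuinely analytic point --- and the step I expect to be the main obstacle to a fully rigorous write-up --- is the verification inside the swindle that the infinite composite $F_i$ is a well-defined homeomorphism: convergence of the partial composites, continuity at the endpoint where infinitely many factors accumulate (which holds because those factors are supported on nested intervals shrinking to that endpoint), and injectivity. Everything else is formal manipulation of homogeneous quasimorphisms and elementary surgery on homeomorphisms of the interval and circle.
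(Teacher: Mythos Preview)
The paper does not prove this proposition; it merely states it with attribution to Ghys \cite[Prop.~5.4]{Ghys Ens} and moves on. So there is no argument in the paper to compare against, and I can only evaluate your proof on its own merits.

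Your argument is correct and is essentially the standard one (and close to what Ghys does): pass to the difference $d=r-\rott$, descend to a homogeneous quasimorphism $\bar d$ on $\Homeo(S^1)$, and kill $\bar d$ using uniform perfectness. Two small remarks. First, you announce that an element with a fixed point is a \emph{single} commutator in $H=\Homeo_+([0,1])$, but your fragmentation--swindle actually exhibits it as a product of \emph{two} commutators $[F_1,t_1][F_2,t_2]$; this is harmless for the argument (your bound $|\bar d(f)|\le 3D$ is exactly what that gives), just a slight overstatement of what you prove. Second, in the swindle you want the conjugates $t^k f_i t^{-k}$ to have supports shrinking to the endpoint \emph{near which $f_i$ is already the identity} is not quite the right picture; rather, you want them to escape to the \emph{other} endpoint so that at each interior point only finitely many factors act nontrivially. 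Once the direction of $t$ is chosen correctly the convergence, continuity at the accumulating endpoint, and injectivity are all routine, as you say. Your parenthetical is exactly right: everything after descending to $\Homeo(S^1)$ is a proof that this group is uniformly perfect, and one could simply quote that.
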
 

While finding the defect of $\rott$ is not too difficult (spoiler: it's $D=1$), it is a different question to find the optimal bound for $|\rott(fg) - \rott(f) -\rott(g) |$ if $f$ and $g$ have \emph{specified rotation numbers}.  

\begin{problem} \label{rot prob}
Let $f$ and $g \in \Homeo_\Z(\R)$.  Given $\rott(f)$ and $\rott(g)$, what are the possible values of $\rott(fg)$?  More generally, if $w$ is a word in $f$ and $g$, what are the possible values of $\rott(w)$?  
\end{problem}

This was for many years an open problem, with significant implications in topology. This connection to topology was through the classification of taut foliations on Seifert fibered 3-manifolds.  (This should, perhaps, not strike the reader as not so surprising, given the relationship between foliations and group actions discussed in Section \ref{bundle subsec}.)
Specifically, in \cite{EHN}, Eisenbud, Hirsch and Neumann reduced the last step of the program to classify such foliations to solving Problem \ref{rot prob} for the word $fg$.  A conjectural answer to this question, and quite a bit of evidence, was later given in \cite{JN}, and the problem finally solved by Naimi \cite{Naimi}, more than 10 years after Eisenbud, Hirsch and Neumann's work.

More recently, Calegari and Walker \cite{CW} developed an algorithm which answers Problem \ref{rot prob} not only for $fg$ but for any word in $f$ and $g$.  This algorithm is elegant and elementary, implementable by computer or by hand, and the main tool used to prove our rigidity Theorem \ref{invent thm}.

%--------------------------
\subsection{Quasimorphisms and $\H^2_b(G; \R)$}  \label{h2b subsec}

Before describing the algorithm of Calegari--Walker and the solution to Problem \ref{rot prob}, we make a quick comment on the connections between quasimorphisms and \emph{bounded cohomology}.   Section 6 of \cite{Ghys Ens} provides a wonderful introduction to the subject and its applications to groups acting on the circle; we see no need to compete with it here so will be very brief in our exposition.  

\boldhead{Bounded cohomology.}
One construction of the cohomology $H^*(G; \R)$ of a discrete group is as the cohomology of the complex $C^n(G; \R)$ of functions $G^n \to \R$, equipped with a suitable differential.  Bounded cohomology is the cohomology of the subcomplex $C^n_b(G; \R)$ of \emph{bounded} functions.   The resulting theory is remarkably different.  For example, amenable groups always have trivial bounded cohomology in all degrees, while hyperbolic groups (including our favorite $\Gamma_g$) have infinite dimensional second bounded cohomology.  

A more sophisticated way to give the definition of a quasimorphism on a group $G$ is as a ``inhomogeneous real 1-cochain on $G$, whose coboundary is bounded."   The coboundary of a quasimorphism $r: G \to \R$ is thus a \emph{bounded 2-cocycle} on $G$, representing an element of second bounded cohomology, $H^2_b(G; \R)$.   In the case of the quasimorphism $\rott$, the boundary 2-cocycle is exactly the function $\tau$ from Theorem \ref{rot coord}.    Many other results on rotation numbers of group actions can also be rephrased in terms of bounded cohomology.  For instance, Proposition \ref{unique qm} can be interpreted as a statement that $H^2_b(\Homeo(S^1); \R)$ is one dimensional, 
and Ghys' original statement of Theorem \ref{rot coord} is a statement about the \emph{bounded integer Euler class} in $H^2_b(\Gamma_g; \Z)$.   
See  \cite{Ghys Ens} and \cite{Ghys Lef} for more details.

%-----------------------------
\section{Rotation numbers of products}  \label{CW sec}

In this section we explain the algorithm of Calegari and Walker and its first applications, including a solution to Problem \ref{rot prob} and a short proof of the Milnor--Wood inequality.

As a starting point, we return to the basic question raised in Section \ref{qm subsec}.
\begin{quote}
\textit{Given $\rott(f)$ and $\rott(g)$, and a word $w$ in $f$ and $g$, what are the possible values of $\rott(w)$?}
\end{quote}
We will consider an even more general version of this problem 
\begin{quote}
\textit{Given constraints $\rott(f_1) = s_1, ... , \rott(f_n) = s_n$, and a word $w$ in $f_1, f_2, ... , f_n$, what are the possible values of $\rott(w)$?}
\end{quote}

Our first lemma reduces this to a much easier problem.  The lemma -- like much of the material in this section -- originally appears in \cite{CW}.  

\begin{lemma}  \label{R bounded lem}
Let $w$ be any word in $f_1, ... , f_n$.  The set $\{ \rott(w) \mid \rott(f_i) =s_i \}$ is bounded and connected, hence an interval.   
\end{lemma}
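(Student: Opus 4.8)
The plan is to establish the two assertions — boundedness and connectedness — separately, and then invoke the elementary fact that a bounded connected subset of $\R$ is an interval.

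For boundedness, I would argue by induction on the word length of $w$, using the quasi-additivity of $\rott$ (Proposition \ref{rot prop}, part 4) together with the fact that, up to composing with integer translations (which shift $\rott$ by integers and do not change the relevant homeomorphisms), each letter $f_i$ or $f_i^{-1}$ can be taken to have lifted rotation number in a bounded set depending only on $s_i$. Concretely: if $w = \ell_1 \ell_2 \cdots \ell_m$ where each $\ell_j \in \{f_1^{\pm 1},\ldots,f_n^{\pm 1}\}$, then $|\rott(\ell_1\cdots\ell_m)| \le |\rott(\ell_1)| + |\rott(\ell_2 \cdots \ell_m)| + 2$ by quasi-additivity, and since $\rott(\ell_j)$ is determined up to an integer by the constraints $\rott(f_i)=s_i$, an easy induction gives a bound on $|\rott(w)|$ depending only on $m$ and the $s_i$. (One has to be slightly careful that $\rott(f_i^{-1}) = -\rott(f_i)$, which is immediate from homogeneity.) This shows the set $\{\rott(w) : \rott(f_i)=s_i\}$ is bounded.

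For connectedness, the natural approach is to realize the set as the continuous image of a connected space. The space of tuples $(f_1,\ldots,f_n) \in \Homeo_\Z(\R)^n$ satisfying $\rott(f_i) = s_i$ is a subset of $\Homeo_\Z(\R)^n$; I would show it is path-connected by noting that the fiber $\rott^{-1}(s)$ in $\Homeo_\Z(\R)$ is path-connected — one can connect any $f$ with $\rott(f)=s$ to a standard element (e.g. the translation $T_s$ if $s$ is allowed, or more robustly, to a fixed model homeomorphism with that rotation number) by a path staying in the fiber, for instance by straight-line interpolation on $\R$ after conjugating so that the relevant combinatorial data (fixed points, or the semi-conjugacy to a rotation) is matched up; here Proposition \ref{rot prop}(5) and the description of semi-conjugacy classes as connected are the relevant tools. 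Then $w$ defines a continuous map $\Homeo_\Z(\R)^n \to \Homeo_\Z(\R)$, and $\rott$ is continuous (Proposition \ref{rot prop}, part 5), so the composite $(f_1,\ldots,f_n) \mapsto \rott(w(f_1,\ldots,f_n))$ is continuous on a path-connected domain; its image is therefore connected.

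The main obstacle I anticipate is the path-connectedness of the fiber $\rott^{-1}(s) \subset \Homeo_\Z(\R)$, which needs a genuine (if standard) argument rather than a one-line appeal: when $s$ is rational the combinatorics of periodic orbits must be handled, and when $s$ is irrational one leans on the semi-conjugacy-to-a-rotation picture and the connectedness of semi-conjugacy classes. Everything else — the boundedness induction and the continuity of word maps and of $\rott$ — is routine given the properties already collected in Proposition \ref{rot prop}.
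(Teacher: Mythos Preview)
Your approach is essentially the paper's: boundedness by induction on word length via quasi-additivity, and connectedness by showing the constraint set $\prod_i \rott^{-1}(s_i)$ is connected and then pushing forward by the continuous map $\rott \circ w$.

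The one place you make things harder for yourself is the connectedness of the fiber $\rott^{-1}(s)$. You treat this as the ``main obstacle'' and propose a case analysis on rational versus irrational $s$, building explicit paths. The paper instead invokes Property~6 of Proposition~\ref{rot prop} directly: $\rott^{-1}(s)$ is exactly a single semi-conjugacy class in $\Homeo_\Z(\R)$, and the text has already recorded (after Definition~\ref{C-D def}) that semi-conjugacy classes are connected. So no separate argument is needed --- the connectedness of fibers is a corollary of facts already on the table. Your route would work too, but it duplicates effort. (Also, since in this section $w$ is a positive word in the $f_i$, you can drop the discussion of $f_i^{-1}$ in the boundedness step.)
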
 

\begin{proof}  In the $n = 2$ case, boundedness follows immediately from the fact that $\rott$ is a quasimorphism.  The general case is proved by an inductive argument.   
For connectedness, property 6 of Proposition \ref{rot prop} states that any set of the form $\{f \in \Homeo_\Z(\R) \mid \rott(f) = s\}$ is a single semi-conjugacy class, hence connected.  Since $\rott$ is continuous, $\{ \rott(w) \mid \rott(f_i) =s_i \}$ is connected.  
\end{proof}

Thus, to determine all possible values of $\rott(w(f_1, ... , f_n))$ given the constraints $\rot(f_i) = s_i$, it suffices to determine the maximum and minimum possible values.  
To this end, let 
$$R_w(s_1, ... , s_n) := \sup \{\rott(w) \mid \rott(f_i) = s_i\}.$$  
We leave it to the reader to check that $\inf \{\rott(w) \mid \rott(f_i) = s_i \} = -R_w(-s_1, ... , -s_n)$, so a study of $R_w(s_1, ...  ,s_n)$ suffices.   In \cite{CW}, a compactness argument is used to show that the supremum is actually a maximum, although we won't need this fact here.  Using continuity of $\rott$, they also show the following.  

\begin{proposition}[see Lemmas 2.14 and 3.3 in \cite{CW}]  \label{rat s prop}
$R_w$ satisfies the following properties
\begin{enumerate}
\item (Lower semi-continuity) Let $s_i(k)$ be a sequence approaching $s_i$. Then 
$$R_w(s_1, ... , s_n) \geq \limsup R_w(s_1(k),..., s_n(k)).$$   

\item (Monotonicity) If $s_i \geq s_i'$, then 
$$R_w(s_1, ..., s_n) \geq R_w(s_1',..., s_n').$$
\end{enumerate}
\end{proposition}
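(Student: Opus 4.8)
The plan is to prove monotonicity first, essentially by hand, and then deduce lower semi-continuity from it together with a compactness argument in the spirit of \cite{CW}. Throughout, $w$ is a \emph{positive} word in $f_1,\dots,f_n$ --- monotonicity genuinely uses this, since e.g.\ $R_{f^{-1}}(s)=-s$ is decreasing. For monotonicity, suppose $s_i\ge s_i'$ for every $i$. Given any tuple $(f_1',\dots,f_n')$ in $\Homeo_\Z(\R)^n$ with $\rott(f_i')=s_i'$, I would build a tuple $(f_1,\dots,f_n)$ with $\rott(f_i)=s_i$ and $\rott\bigl(w(f_1,\dots,f_n)\bigr)\ge\rott\bigl(w(f_1',\dots,f_n')\bigr)$; taking the supremum over all such $(f_i')$ then gives $R_w(s_1,\dots,s_n)\ge R_w(s_1',\dots,s_n')$. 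To build $f_i$, set $f_i:=f_i'+t_i$ for a constant $t_i\ge 0$: the map $f_i'+t$ again lies in $\Homeo_\Z(\R)$; the function $t\mapsto\rott(f_i'+t)$ is continuous (continuity of $\rott$), equals $s_i'$ at $t=0$, and tends to $+\infty$ as $t\to+\infty$, since $f_i'+t\ge T^{\lfloor t\rfloor}\circ f_i'$ pointwise and $\rott(T^{\lfloor t\rfloor}\circ f_i')=\lfloor t\rfloor+s_i'$. By the intermediate value theorem there is $t_i\ge 0$ with $\rott(f_i'+t_i)=s_i$. As $t_i\ge 0$ we have $f_i\ge f_i'$ pointwise, and since $w$ is positive and composition of order-preserving maps respects the pointwise order, $w(f_1,\dots,f_n)\ge w(f_1',\dots,f_n')$ pointwise. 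Finally $g\ge h$ pointwise forces $g^m\ge h^m$ pointwise for every $m$ (induction, using that $g$ is increasing), hence $\rott(g)\ge\rott(h)$; applied to $g=w(f)$, $h=w(f')$ this is exactly the inequality we need.

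For lower semi-continuity, suppose $s_i(k)\to s_i$ and set $L:=\limsup_k R_w(s_1(k),\dots,s_n(k))$; passing to a subsequence, assume $R_w(s_1(k),\dots,s_n(k))\to L$. For each $k$ choose $g^{(k)}=(g_1^{(k)},\dots,g_n^{(k)})$ with $\rott(g_i^{(k)})=s_i(k)$ and $\rott\bigl(w(g^{(k)})\bigr)\ge R_w(s_1(k),\dots,s_n(k))-1/k$, so that $\rott(w(g^{(k)}))\to L$. If the $g^{(k)}$ can be arranged to lie in a set that is precompact in a suitable sense and on which $\rott$ behaves continuously, then a subsequential limit $g$ satisfies $\rott(g_i)=\lim_k s_i(k)=s_i$ and $\rott(w(g))=\lim_k\rott(w(g^{(k)}))=L$, giving $R_w(s_1,\dots,s_n)\ge L$ as desired. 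A first, soft step toward this is that the $g^{(k)}$ are automatically uniformly bounded: any $g\in\Homeo_\Z(\R)$ has $\mathrm{osc}(g-\mathrm{id})<1$ (because $g$ increasing and $\Z$-periodic bounds the oscillation of $g-\mathrm{id}$ below $1$), so $\|g-\mathrm{id}\|_\infty<|\rott(g)|+1$; hence $\|g_i^{(k)}-\mathrm{id}\|_\infty$ is bounded uniformly in $k$. Helly's selection theorem then extracts a subsequence converging pointwise (coordinate by coordinate) to monotone maps $g_i$.

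The hard part is everything after that: a pointwise limit of homeomorphisms is only a monotone map, pointwise convergence is not stable under composition, and $\rott$ is not obviously continuous along such limits, so one cannot simply conclude $\rott(g_i)=s_i$ and $\rott(w(g))=L$. The way I would deal with this follows \cite{CW}: first use monotonicity to reduce to rational constraints --- pick rationals $p_i(k)\in[s_i(k),\,s_i(k)+1/k]$, so $R_w(p_1(k),\dots,p_n(k))\ge R_w(s_1(k),\dots,s_n(k))$ and still $p_i(k)\to s_i$ --- and then exploit that, for rational data, the supremum defining $R_w$ is realized by a finite combinatorial configuration: each $g_i^{(k)}$ can be taken to carry a marked periodic orbit (of period the denominator of $p_i(k)$) and to have a prescribed simple shape on the complementary intervals, so that $\rott(w(g^{(k)}))$ depends only on the cyclic order of finitely many marked points. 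After conjugating, the data for the $g^{(k)}$ then lives in a genuinely compact space of configurations, from a sub-limit of which the desired homeomorphism $g$ is reconstructed; this is essentially the same compactness input that \cite{CW} use to show the supremum defining $R_w$ is attained. The technical heart, and the place I expect the real work, is precisely this claim that the rational maxima are governed by such finite configurations.
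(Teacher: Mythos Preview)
The paper does not actually give its own proof of this proposition --- it simply attributes the result to \cite{CW} (Lemmas~2.14 and~3.3), noting only that ``continuity of $\rott$'' is the key ingredient.  So there is no argument in the text to compare against beyond that citation.

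Your monotonicity argument is correct and complete: adding a nonnegative constant $t_i$ to each $f_i'$, choosing $t_i$ by the intermediate value theorem to hit the target rotation number, and then using that positive words preserve the pointwise order on $\Homeo_\Z(\R)$ is exactly the right idea, and the final step ($g\ge h$ pointwise implies $\rott(g)\ge\rott(h)$) is standard.

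For lower semi-continuity your sketch is honest about where the real difficulty lies (pointwise limits drop out of $\Homeo_\Z(\R)$, and $\rott$ is not obviously continuous along such limits), and your proposed resolution --- reduce to rational data via monotonicity, then invoke the finite-configuration/compactness machinery --- is precisely the Calegari--Walker strategy you are citing.  One structural remark: in the paper's presentation this proposition precedes the Calegari--Walker algorithm (Section~\ref{CW sec}), so invoking ``finite configurations govern the rational maxima'' as the technical core makes the argument forward-referencing.  That is not a mathematical gap, but if you want a self-contained proof at this point in the exposition you would need to import that compactness lemma from \cite{CW} explicitly rather than gesture at it.  As written, your proposal is a correct outline that defers to the same source the paper defers to.
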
 

In fact, lower semi-continuity and the argument that $\{ \rott(w) \mid \rott(f_i) =s_i \}$ is an interval apply not only to words in the letters $f_1, ... , f_n$, but to words that include their inverses $f_1^{-1}, ... f_n^{-1}$ also.   However, monotonicity fails in the case where inverses are permitted.  

Going forward, we will make frequent use of monotonicity, and so never permit inverses.  Calegari--Walker emphasize this by referring to a word in $n$ letters as a \emph{positive word}, we will keep our convention and just say ``word" or ``word in $n$ letters".  

An nice consequence of Proposition \ref{rat s prop} is the following.
\begin{corollary}   \label{rat s cor}
Let $w$ be a word in $f_1, ... ,  f_n$.  Then $R_w(s_1, ..., s_n)$ is completely determined by its values for rational $s_i$.  
\end{corollary}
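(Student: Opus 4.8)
The plan is to combine the two halves of Proposition \ref{rat s prop} — lower semi-continuity and monotonicity — by approximating each $s_i$ from \emph{above} by rationals. The point of approaching from above rather than below is that this is the direction in which the two properties push $R_w(s)$ from opposite sides and hence pin it down exactly; if one approached from below, both monotonicity and lower semi-continuity would give only lower bounds on $R_w(s)$ in terms of the nearby rational values, and no equality would follow.

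Concretely, fix $s=(s_1,\dots,s_n)\in\R^n$. For each $i$ choose a strictly decreasing sequence of rationals $s_i(k)\searrow s_i$, and set $s(k)=(s_1(k),\dots,s_n(k))$. First I would observe that $k\mapsto R_w(s(k))$ is non-increasing: since $s_i(k)\geq s_i(k+1)$ for every $i$, monotonicity (Proposition \ref{rat s prop}(2)) gives $R_w(s(k))\geq R_w(s(k+1))$. The same monotonicity statement applied to $s_i(k)\geq s_i$ gives $R_w(s(k))\geq R_w(s)$ for all $k$, and $R_w(s)$ is finite by Lemma \ref{R bounded lem}. Hence $R_w(s(k))$ is a bounded monotone sequence, converging to some limit $L$ with $L\geq R_w(s)$.

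For the reverse inequality I would apply lower semi-continuity (Proposition \ref{rat s prop}(1)) to the sequence $s(k)\to s$, obtaining $R_w(s)\geq\limsup_k R_w(s(k))=L$. Combining, $R_w(s)=L=\lim_k R_w(s(k))$, which depends only on the values of $R_w$ at the rational points $s(k)$. As $s$ was arbitrary, $R_w$ is determined by its restriction to $\Q^n$. One can repackage this more symmetrically as $R_w(s)=\inf\{R_w(q): q\in\Q^n,\ q_i\geq s_i \text{ for all } i\}$; monotonicity makes the infimum over all rational $q\geq s$ agree with the limit along any rational sequence decreasing to $s$.

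There is no genuine obstacle here beyond bookkeeping. The single thing that must be gotten right is the side from which one approximates, together with the observation (noted just before the corollary) that lower semi-continuity as stated in Proposition \ref{rat s prop} does hold for positive words — it is only monotonicity that would fail if inverses were permitted, and monotonicity is used here only between rational tuples related coordinatewise. Everything else is a direct application of the two already-established properties.
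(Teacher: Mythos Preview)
Your proof is correct and is precisely the argument the paper has in mind: the paper's proof is the one-liner ``this is just the standard statement that an increasing, lower semi-continuous function is determined by its values on a dense set,'' and you have spelled out exactly that standard statement, approximating from above by rationals and squeezing $R_w(s)$ between the monotonicity lower bound and the semi-continuity upper bound.
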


\begin{proof} This is just the standard statement that an increasing, lower semi-continuous function is determined by its values on a dense set.  
\end{proof}

%----------------------
\subsection{The Calegari--Walker algorithm}
The Calegari--Walker algorithm is a process to compute $R_w(s_1, ... , s_n)$ for rational $s_i$.  
It takes as input a possible cyclic order of a set of fixed or periodic points for homeomorphisms $f_i$, and as output gives the maximum possible value of $\rott(w(f_1, ... , f_n))$ where $f_i$ are homeomorphisms subject to these constraints.   By running the algorithm on all of the (finitely many) possible cyclic orderings of periodic orbits with a given period $s_i$, and taking the maximum of the outputs, one recovers $R_w$.  

The description of the algorithm in \cite{CW} is designed to be easily implementable by computer.  We have a different aim here, so will describe it in slightly different language.  The main idea is remarkably simple -- one simply replaces homeomorphisms with \emph{monotone maps} to reduce the computation to a finite problem.  To explain this precisely requires a small amount of set-up, which we do now.

\begin{definition}[``Periodic" elements of $\Homeo_\Z(\R)$] \label{per def}
An element $f \in \Homeo_\Z(\R)$ is \emph{$p/q$--periodic} if $\rott(f) = p/q \in \Q$.  
\end{definition}
\noindent In other words, ``periodic" elements of $\Homeo_\Z(\R)$ are lifts of homeomorphisms of $S^1$ with periodic points.  

\begin{definition}[Periodic orbits in $\R$]  \label{per orbit def}
Let $f \in \Homeo_\Z(\R)$ be $p/q$--periodic.  
If $p/q$ is in lowest terms\footnote{Our convention is that ``in lowest terms" implies that $q>0$. We also say $0/1$ is the expression of 0 in lowest terms.}, we define a $p/q$-\emph{periodic orbit} for $f$ to be any orbit $X$ that projects to a periodic orbit for $\bar{f} \in \Homeo(S^1)$.  

If instead $p/q = k p'/k q'$ where $p'/q'$ is in lowest terms, then a $p/q$-periodic orbit is defined to be a union of $k$ disjoint $p'/q'$-periodic orbits for $f$.
\end{definition}

\noindent Much like the definition of rotation number, Definitions \ref{per def} and \ref{per orbit def} naturally extend to all \emph{monotone maps} of $\R$.  

The next definition describes periodic monotone maps that translate points ``as far as possible to the right", given a constraint on an orbit.   

\begin{definition}[Maximal monotone map]  \label{max mon def}
Let $p/q \in \Q$, with $q>0$. Let $X \subset \R$ be a lift of a set $\bar{X} \subset S^1$ with $|\bar{X}| = q$, enumerated in increasing order as
$... < x_{i-1}  <  x_i < x_{i+1} < ...$ \\
The \emph{(X, p/q) maximal monotone map} is the map $f: \R \to \R$ defined by 
$$f(x) = x_{i+p} \text{ for } x \in (x_{i-1}, x_i].$$
\end{definition}

We'll call a set $X \subset \R$ as described in Definition \ref{max mon def} a \emph{$\Z$-periodic set of cardinality $q$}. 
\noindent Note that the enumeration of $X$ satisfies $x_{i+q} = x_i + 1$, and that the $(X, p/q)$ maximal monotone map satisfies 
\begin{enumerate}[i.]
\item ($\Z$-periodicity) $f(x+1) = f(x) + 1$, so it really is a \emph{monotone map}, and 
\item ($p/q$-periodic orbits) $f^q(x_i) = x_{i+pq} = x_i + p$.
\end{enumerate}

The following lemma explains the terminology ``maximal,"  its proof is immediate from the definition.  
\begin{lemma} \label{max mon lem}
Let $X$ be a $\Z$--periodic set of cardinality $q$, and let $f$ be the $(X, p/q)$  maximal monotone map.  If $g$ is any $p/q$--periodic monotone map with $X$ as a $p/q$-periodic orbit, then 
$$g(x) \leq f(x) \text{ for all } x \in \R.$$
\end{lemma}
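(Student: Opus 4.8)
The plan is to reduce the pointwise inequality to a single computation of how $g$ acts on the set $X$ itself. Fix the enumeration $\cdots < x_{i-1} < x_i < x_{i+1} < \cdots$ of $X$ with $x_{i+q} = x_i + 1$ coming from Definition~\ref{max mon def}, so that by definition $f(x) = x_{i+p}$ whenever $x \in (x_{i-1}, x_i]$.

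First I would determine $g$ on $X$. Since $X$ is a $p/q$-periodic orbit for $g$, the map $g$ permutes $X$; being monotone (in particular non-decreasing) and commuting with translation by $1$, it must act on the indexed points as a shift, $g(x_i) = x_{i+j}$ for a single integer $j$ not depending on $i$. Iterating $q$ times and using $x_{i+q} = x_i + 1$ yields $g^q(x_i) = x_{i+jq} = x_i + j$, so $\rott(g) = j/q$ by homogeneity of $\rott$; since $\rott(g) = p/q$ by hypothesis, this forces $j = p$. Thus $g(x_i) = x_{i+p}$ for every $i$, which is exactly the rule defining $f$ on $X$.

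Then the inequality is immediate: given $x \in \R$, pick the unique index $i$ with $x_{i-1} < x \le x_i$; then $f(x) = x_{i+p}$, while monotonicity of $g$ together with $x \le x_i$ gives $g(x) \le g(x_i) = x_{i+p} = f(x)$. Since $x$ was arbitrary, $g \le f$ pointwise.

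The only step that requires any argument is the identification $g(x_i) = x_{i+p}$, and even that is not really an obstacle: it is the classical fact that a circle homeomorphism with rational rotation number $p/q$ acts on each of its periodic orbits as the cyclic shift by $p$, transported to monotone maps via Definitions~\ref{per def} and~\ref{per orbit def}. No continuity, compactness, or deeper properties of $\rott$ are needed; once $g$ and $f$ are seen to agree on $X$, monotonicity of $g$ supplies everything else, which is why the lemma can fairly be called immediate from the definitions.
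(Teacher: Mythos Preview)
Your proof is correct and is exactly the argument the paper has in mind when it says the lemma is ``immediate from the definition''; you have simply made explicit the one nontrivial observation, namely that any $p/q$--periodic monotone map with $X$ as a $p/q$--periodic orbit must agree with $f$ on $X$ (i.e.\ $g(x_i)=x_{i+p}$), after which monotonicity finishes the job.
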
 

As a consequence, we have the following.  %more
\begin{proposition}  \label{max prop}
Let $p_i/q_i$ be given, and let $f_i$ be $(X_i, p_i/q_i)$ maximal monotone maps.  If $g_i$ are any other $(X_i, p_i/q_i)$ monotone maps, and  $w$ a word in $n$ letters, then 
$$\rott(w(g_1, ... g_n)) \leq \rott(w(f_1, ... f_n)).$$
\end{proposition}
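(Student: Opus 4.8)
The plan is to derive the proposition from two elementary facts, neither of which needs anything beyond what is already in hand: (1) as a composition of monotone maps, the word is \emph{pointwise dominated} when each $g_i$ is replaced by the corresponding maximal map $f_i$; and (2) pointwise domination of monotone maps forces the inequality on rotation numbers. The one place where care is needed is step (1), and it is precisely where the standing convention that $w$ is a \emph{positive} word (no inverses) gets used.

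For the domination step, first I would observe that a composition of monotone maps is again a monotone map — it is clearly $\Z$-periodic and order-preserving — so $w(g_1,\dots,g_n)$ and $w(f_1,\dots,f_n)$ are monotone maps and their rotation numbers are defined. Write $w$ as a word of length $L$ in the letters, $w = \ell_1 \ell_2 \cdots \ell_L$ with $\ell_j \in \{1,\dots,n\}$, and for $0 \le k \le L$ let $w_k$ be the map obtained by using $f$ for the first $k$ letter-occurrences and $g$ for the remaining $L-k$, so that $w_0 = w(g_1,\dots,g_n)$ and $w_L = w(f_1,\dots,f_n)$. To compare $w_{k-1}$ with $w_k$, factor $w_{k-1} = A \circ g_{\ell_k} \circ B$ and $w_k = A \circ f_{\ell_k} \circ B$, where $A$ is the composition of the first $k-1$ occurrences (using $f$) and $B$ the composition of the last $L-k$ occurrences (using $g$). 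For any $x \in \R$, put $y = B(x)$; Lemma \ref{max mon lem} gives $g_{\ell_k}(y) \le f_{\ell_k}(y)$, and since $A$ is a composition of monotone maps it is order-preserving, hence $A(g_{\ell_k}(y)) \le A(f_{\ell_k}(y))$, i.e. $w_{k-1}(x) \le w_k(x)$. Chaining these over $k = 1, \dots, L$ yields $w(g_1,\dots,g_n)(x) \le w(f_1,\dots,f_n)(x)$ for all $x$. (An inverse letter would have flipped the crucial inequality, so positivity is used here in an essential way.)

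For the second step I would record the general fact that if $F$ and $G$ are monotone maps with $F(x) \le G(x)$ for all $x$, then $\rott(F) \le \rott(G)$: an easy induction shows $F^m(x) \le G^m(x)$ for all $m \ge 0$ (given $F^m(x) \le G^m(x)$, apply the order-preserving map $F$ to get $F^{m+1}(x) \le F(G^m(x))$, then use $F \le G$ pointwise to get $F(G^m(x)) \le G(G^m(x)) = G^{m+1}(x)$), and dividing by $m$ and letting $m \to \infty$ — the limits exist since $\rott$ is defined on monotone maps — gives the claim. Applying this with $F = w(g_1,\dots,g_n)$ and $G = w(f_1,\dots,f_n)$ completes the proof. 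The only real obstacle is the bookkeeping in step (1): setting up the one-occurrence-at-a-time replacement so that the maps composed to the \emph{left} of the swapped letter are order-preserving (which they are, being compositions of monotone maps), and noting explicitly that this is exactly where the ``no inverses'' convention enters.
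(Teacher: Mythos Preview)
Your proof is correct and follows exactly the same approach as the paper: use Lemma~\ref{max mon lem} to get $g_i(x)\le f_i(x)$ pointwise, deduce $w(g_1,\dots,g_n)(x)\le w(f_1,\dots,f_n)(x)$, and conclude the inequality on rotation numbers. The paper compresses all of this into three lines, leaving the letter-by-letter replacement and the iterate-domination argument implicit, whereas you have spelled them out carefully (including the remark on where positivity of the word is used).
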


\begin{proof} 
Lemma \ref{max mon lem} implies that $g_i(x) \leq f_i(x)$ for all $x \in \R$, hence $w(g_1, ... g_n)(x) \leq w(f_1, ... f_n)(x)$, and so 
$\rott(w(g_1, ... g_n)) \leq \rott(w(f_1, ... f_n))$.
\end{proof}

\begin{proposition}  \label{Q prop}
Let $f_i$ be $(X_i, p_i/q_i)$ maximal monotone maps, and let $w$ be a word in the $f_i$.  Then 
$\rott(w) \in \Q$.
\end{proposition}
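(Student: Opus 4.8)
The plan is to show that the composition $w(f_1,\dots,f_n)$ of maximal monotone maps is \emph{itself} a maximal monotone map associated to some finite $\Z$--periodic set, and hence has rational rotation number by property ii.\ following Definition \ref{max mon def}. The key observation is that each $(X_i,p_i/q_i)$ maximal monotone map $f_i$ is a \emph{step function}: it is constant on each half-open interval $(x_{i-1},x_i]$ between consecutive points of $X_i$, and its values all lie in $X_i$. So the image of \emph{any} point of $\R$ under $f_i$ lands in the discrete set $X_i$, and more generally $f_i$ maps the finite (mod $\Z$) set $X_1\cup\cdots\cup X_n$ into $X_i \subset X_1\cup\cdots\cup X_n$.

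First I would set $X := X_1\cup X_2 \cup \cdots \cup X_n$, a $\Z$--periodic subset of $\R$ that is finite modulo $\Z$ (it descends to a finite subset of $S^1$ of cardinality at most $q_1+\cdots+q_n$). Since each $f_i$ is monotone, $\Z$--periodic, and sends $X$ into $X$, the composition $g := w(f_1,\dots,f_n)$ is again a monotone $\Z$--periodic map sending $X$ into $X$; moreover $g$ is constant on each half-open interval $(x_{i-1},x_i]$ cut out by consecutive points of $X$, because the innermost letter of $w$ already has this property and the subsequent letters are applied to points of $X$. Thus $g$ is a step function taking values in $X$, constant on the intervals between consecutive points of $X$.

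Next I would analyze the dynamics of $g$ on the finite set $\bar X \subset S^1$. The induced map $\bar g$ on $\bar X$ is a (not necessarily injective) monotone circular map of a finite cyclic set into itself. Any such map, iterated, is eventually periodic on $\bar X$: choosing any point $x\in X$, the forward orbit $x, g(x), g^2(x),\dots$ projects to a sequence in the finite set $\bar X$, so some $g^a(x)$ and $g^b(x)$ with $a<b$ project to the same point, i.e.\ $g^b(x) = g^a(x) + m$ for some $m\in\Z$. Because $g$ commutes with integer translation, this gives $g^{b-a}(y) = y + m$ where $y := g^a(x)$, so $g$ has a point $y$ with $g^{q}(y)=y+p$ for $q=b-a>0$, $p=m$. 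By property 6 of Proposition \ref{rot prop} (periodic points), this forces $\rott(g) = p/q \in \Q$.

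The main obstacle — and it is a mild one — is making sure the ``constant on half-open intervals, values in $X$'' property genuinely propagates through the whole word $w$, including handling the endpoints correctly: one must check that composing step functions of this type again yields a step function of this type with the same breakpoint set $X$ (using that the half-open convention in Definition \ref{max mon def} is consistent under composition, since $f_i\big((x_{i-1},x_i]\big)$ is a single point of $X$, which is itself a left endpoint-inclusive value). Once that bookkeeping is in place, the eventual-periodicity argument on the finite set $\bar X$ together with Proposition \ref{rot prop}(6) finishes the proof. One could alternatively phrase this more cleanly: a monotone $\Z$--periodic step function with finitely many steps mod $\Z$ and values among its own breakpoints always has rational rotation number, and $w(f_1,\dots,f_n)$ is visibly such a map.
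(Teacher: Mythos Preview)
Your argument is correct and follows essentially the same route as the paper's: set $\mathbf{X} = \bigcup_i X_i$, observe that $w(\mathbf{X}) \subset \mathbf{X}$ because the image of each $f_i$ lies in $X_i$, and then use finiteness of $\mathbf{X}$ modulo $\Z$ (pigeonhole) to produce a point $y$ with $w^q(y) = y + p$, giving $\rott(w) = p/q$. One small caveat: your opening claim that $w$ is ``itself a maximal monotone map associated to some finite $\Z$--periodic set'' is not quite right --- the composition is a $\Z$--periodic step function with breakpoints and values in $\mathbf{X}$, but it need not advance by a \emph{fixed} number of steps at each breakpoint, which is what Definition~\ref{max mon def} requires --- however your actual argument only uses the weaker (correct) step-function property, so nothing is lost.
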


\begin{proof}
Let $\mathbf{X} := \bigcup_i X_i$, and note that $\mathbf{X}$ is invariant under integer translations, and that $w(\mathbf{X}) \subset \mathbf{X}$.  
Since $\mathbf{X}$ mod $\Z$ is finite, it follows that it contains some \emph{periodic point} for $w$, i.e. a point $x$ such that 
$$w^m(x) = x+n$$
for some $m, n \in \Z$.  From the definition of rotation number, we have $\rott(w) = m/n$.   
\end{proof}

Proposition \ref{Q prop} also gives an easy and effective way to compute $\rott(w)$ given $(X_i, p_i/q_i)$.   Starting at any $x \in X$ and successively applying $w$, one will eventually find $k, m$ such that $w^{k+m}(x) = w^k(x)+n$, and hence can conclude that $\rott(w) = n/m$.    Here is a concrete example.  

\begin{example} \label{CW ex1}
Let $p_1/q_1 = p_2/q_2 = 1/2$.  Let $X$ and $Y \subset \R$ be $\Z$-periodic sets of cardinality 2, and assume they are ordered
$ ... x_1 < y_1 < x_2 < y_2 <  x_3...$
as shown in figure \ref{per fig}.  
Let  $f$ and $g$ be $(X, p_1/q_1)$ and $(Y, p_2/q_2)$ maximal monotone maps, respectively.  
Then the orbit of $x_1$ under $w = fg$ is given by $$x_1, \, \, fg(x_1)= x_4, \,  \, (fg)^2(x_1) = x_7, \, ...$$
as depicted in figure \ref{per fig}.  Since $x_7 =  x_1+3$, we have $\rott(fg) = 3/2$.  
\end{example}

 \begin{figure*}
   \labellist 
  \footnotesize \hair 2pt
   \pinlabel $x_1$ at 15 -7
   \pinlabel $y_1$ at  58 -7
       \pinlabel $x_2$ at 92 -7 
     \pinlabel $y_2$ at 117 -7 
     \pinlabel $x_3$ at 187 -7 
     \pinlabel $...$ at 205 -7 
     \pinlabel $x_7$ at 530 -7 
     \pinlabel $g$ at 70 45 
     \pinlabel $f$ at 200 45
     \pinlabel $g$ at 340 45 
     \pinlabel $f$ at 480 45
   \endlabellist
  \centerline{
    \mbox{\includegraphics[width=5in]{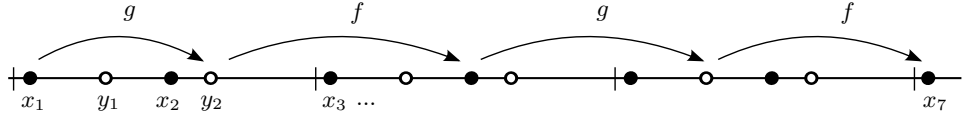}}}
 \caption{\small A periodic orbit for $fg$  (Example \ref{CW ex1})}
  \label{per fig}
  \end{figure*}

The reader should at this point convince him or herself that Example \ref{CW ex1} is not a particularly special easy case -- changing the input data or the word $w$ might result in a longer computation, but will never be more technically challenging.  
We suggest the following:

\begin{exercise} (computing rotation numbers)
\begin{enumerate}
\item With the input data of Example \ref{CW ex1}, compute $\rott(fg^2)$.  
\item Change the input data of Example \ref{CW ex1} so that the points of $X$ and $Y$ are ordered 
$$ ... x_1 < x_2 < y_1 < y_2 <  x_3 < x_4 < ...$$
and verify that the $(X, 1/2)$ and $(Y, 1/2)$ maximal monotone maps satisfy $\rott(fg) = 1$.  
\item Change the input data of Example \ref{CW ex1} again so that $p_2/q_2 = 1/3$, and $Y$ is $\Z$-periodic of cardinality 3.  (There are multiple possibilities for the ordering of points of $X$ and $Y$ in $\R$, choose one).  Compute $\rott(fg)$.     Now choose a different ordering of the points in $X$ and $Y$, and compute $\rott(fg)$ again.  
\end{enumerate}
\end{exercise}

Monotone maps can always be approximated by homeomorphisms, as stated in the next proposition. 

\begin{proposition} \label{homeo prop}
Let $f_i$ be $(X_i, p_i/q_i)$ maximal monotone maps.  
Then there exist $g_i \in \Homeo_\Z(\R)$ with $X_i$ a periodic orbit for $g_i$, and so that 
$\rott(w(g_1, ... g_n)) = \rott(w(f_1, ... f_n))$ holds for all $w$.
\end{proposition}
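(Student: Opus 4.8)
The plan is to prove the reverse inequality to Proposition \ref{max prop} and then promote it to equality, by approximating each maximal monotone map from below by honest homeomorphisms that keep $X_i$ as a periodic orbit. Write $\mathbf{X} := \bigcup_i X_i$, a set invariant under integer translation and finite modulo $\Z$, and abbreviate $w(g)$ for $w(g_1,\dots,g_n)$. One inequality is free: by Lemma \ref{max mon lem}, any $g_i \in \Homeo_\Z(\R)$ for which $X_i$ is a $p_i/q_i$-periodic orbit satisfies $g_i(x)\le f_i(x)$ for all $x$ (and necessarily agrees with $f_i$ on $X_i$), so for every word $w$ one gets $w(g)(x)\le w(f)(x)$ pointwise, hence $\rott(w(g))\le\rott(w(f))$. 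Everything is in producing $g_i$ realizing the reverse inequality for all $w$ simultaneously. First I would construct, for each $i$ and each $m\in\N$, an approximant $g_i^{(m)}\in\Homeo_\Z(\R)$ with $X_i$ as a periodic orbit, $g_i^{(m)}\le g_i^{(m+1)}\le f_i$ pointwise, and $g_i^{(m)}(x)\to f_i(x)$ for every $x$: on each interval $[x^{(i)}_j,x^{(i)}_{j+1}]$ between consecutive points of $X_i$, where $f_i$ takes the constant value $x^{(i)}_{j+1+p_i}$ on the half-open part, let $g_i^{(m)}$ be an increasing homeomorphism onto $[x^{(i)}_{j+p_i},x^{(i)}_{j+1+p_i}]$ fixing the endpoint data and staying within $2^{-m}$ of the upper endpoint outside the leftmost $2^{-m}$-fraction of the interval, arranged increasing in $m$.

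The crux is the convergence lemma: for every word $w$ and every $x\in\R$, the number $w(g^{(m)})(x)$ is non-decreasing in $m$ and converges to $w(f)(x)$. I would prove this by induction on the length of $w$, peeling off the innermost letter $f_i$ and writing $w(f)=f_i\circ v(f)$, $w(g^{(m)})=g_i^{(m)}\circ v(g^{(m)})$ for a shorter word $v$. The upper bound $w(g^{(m)})(x)\le w(f)(x)$ and monotonicity in $m$ follow immediately from $g_i^{(m)}\le f_i$, order-preservation, and the inductive hypothesis on $v$. The hard part — and the main obstacle in the whole argument — is the matching lower bound, where one must pass to the limit in the argument and in the map at once: by induction $v(g^{(m)})(x)$ increases to $v(f)(x)$ \emph{from the left}, and $f_i$ is left-continuous (it is constant on each $(x^{(i)}_{k-1},x^{(i)}_k]$), so fixing one early term $v(g^{(m_0)})(x)$ already lying in the same interval of constancy of $f_i$ as $v(f)(x)$ and using that $g_i^{(m)}$ is increasing in $m$ forces $g_i^{(m)}\big(v(g^{(m)})(x)\big)\to f_i\big(v(f)(x)\big)$. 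Approaching a jump of $f_i$ from the right would destroy this, which is exactly why the approximants must be built from below.

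Next, for each word $w$ I would match the rotation number. Put $a/b:=\rott(w(f))$, which is rational by Proposition \ref{Q prop} and is realized by a periodic point $z^*\in\mathbf{X}$ with $w(f)^b(z^*)=z^*+a$. Since $w(f)$ is a $\Z$-periodic monotone step map, it — and hence $w(f)^b$ — is constant on the non-degenerate interval $(c,z^*]$ of constancy of its innermost letter that contains $z^*$; choosing $z_w\in(c,z^*)$ gives $w(f)^b(z_w)=z^*+a>z_w+a$. Applying the convergence lemma to the word $w^b$ yields $w(g^{(m)})^b(z_w)\to z^*+a$, so $w(g^{(m)})^b(z_w)>z_w+a$ for all large $m$; since $h(z)>z+a$ for $h\in\Homeo_\Z(\R)$ and $a\in\Z$ forces $\rott(h)\ge a$, we get $\rott(w(g^{(m)})^b)\ge a$, i.e. $\rott(w(g^{(m)}))\ge a/b$, which together with the automatic reverse inequality gives $\rott(w(g^{(m)}))=\rott(w(f))$ for all $m$ beyond some $M_w$.

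Finally, to pass from ``for each word, eventually in $m$'' to ``a single tuple good for all words'', I would run a diagonal argument on the space $\mathcal{G}$ of tuples $(g_1,\dots,g_n)$ in $\Homeo_\Z(\R)^n$ for which $X_i$ is a $p_i/q_i$-periodic orbit of $g_i$: this is a closed, hence Baire, subspace of $\Homeo_\Z(\R)^n$ in the $C^0$ topology, there are only countably many words, and for each word $w$ the set $\{(g_i):\rott(w(g))=\rott(w(f))\}$ contains the $C^0$-open set $\{w(g)^b(z_w)>z_w+a\}$, which the steps above show is non-empty and can be entered from nearby tuples by steepening the $g_i$ toward $f_i$ near the finitely many points of $\mathbf{X}$ occurring on the relevant orbit of $z_w$; intersecting over all $w$ produces the required tuple. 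The delicate bookkeeping is to verify these steepenings can be made small enough not to spoil finitely many conditions already arranged, i.e. that each such set is genuinely dense in $\mathcal{G}$; the clean alternative is to invoke the compactness argument of \cite{CW}, which shows the relevant suprema $R_w$ are attained, and to organize the diagonalization around that fact.
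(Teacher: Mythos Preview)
Your steps 1--3 are essentially sound, but step 4 has a genuine gap, and the whole route is far more elaborate than necessary.

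The Baire argument fails because the open sets $\{(g_i):w(g)^b(z_w)>z_w+a\}$ are \emph{not} dense in $\mathcal G$. For a concrete obstruction take $n=2$, $p_i/q_i=1/2$, and $X_1,X_2$ alternating as in Example~\ref{CW ex1} (say $X_1=\{0,\tfrac12\}+\Z$, $X_2=\{\tfrac14,\tfrac34\}+\Z$), so that $\rott(f_1f_2)=3/2$. The tuple with each $g_i$ equal to translation by $1/2$ lies in $\mathcal G$, has $\rott(g_1g_2)=1$, and every $C^0$-small perturbation in $\mathcal G$ still has $\rott(g_1g_2)$ close to $1$, not $3/2$. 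Your ``steepening toward $f_i$'' is not a small perturbation of such a tuple, so density fails. The compactness fallback from \cite{CW} does not help either: it produces, for each $w$ separately, \emph{some} tuple realizing the supremum, not a single tuple realizing all of them.

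The paper avoids all of this with a one-line shadowing observation that gives uniformity in $w$ for free. Pick $\epsilon>0$ smaller than the minimum gap in $\mathbf X=\bigcup_i X_i$ and choose $g_i\in\Homeo_\Z(\R)$ (agreeing with $f_i$ on $X_i$) such that
\[
g_i\big((x-\epsilon,x+\epsilon)\big)\subset\big(f_i(x)-\epsilon,\,f_i(x)+\epsilon\big)\quad\text{for every }x\in\mathbf X.
\]
Since each $f_i$ sends $\mathbf X$ into $\mathbf X$, this condition \emph{composes}: by induction on length, for every word $w$, every power $p$, and every $x\in\mathbf X$ one has $w(g)^p(x)\in\big(w(f)^p(x)-\epsilon,\,w(f)^p(x)+\epsilon\big)$. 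Feeding in the periodic point for $w(f)$ supplied by Proposition~\ref{Q prop} gives $\rott(w(g))=\rott(w(f))$ immediately, for all $w$ at once --- no convergence lemma, no left-continuity bookkeeping, no diagonalization. Incidentally, homeomorphisms satisfying this $\epsilon$-shadowing condition can be built by exactly the kind of steepening you describe; the point you are missing is that a \emph{single} such choice already handles every word, because the condition is stable under composition.
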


\begin{proof}
Let $\mathbf{X} = \bigcup_i X_i$ as before.  Pick $\epsilon$ much smaller than the minimum distance between any two points in $\mathbf{X}$, and choose homeomorphisms $g_i$ so that $g_i(x-\epsilon, x+\epsilon) \subset (f_i(x)-\epsilon, f_i(x) + \epsilon)$ holds for all $x \in \mathbf{X}$.  %This can be done by... 
By  Proposition \ref{Q prop}, there exists $x \in X$ and $m, k \in \Z$ such that
$$w(f_1, ... , f_n)^{pm}(x) = x+pk$$
for all $p$, and hence 
$$w(g_1, ... , g_n)^{pm}(x) \in (x+pk - \epsilon, x+pk + \epsilon).$$
Thus, $\rott(w(g_1, ... g_n)) = \rott(w(f_1, ... f_n)) = m/k$.
\end{proof}

%%%%

Combining Propositions \ref{max prop} and \ref{homeo prop} gives an effective tool for computing the maximum possible value of $\rott(w(f_1, ... f_n))$  when $f_i$ are homeomorphisms or monotone maps with periodic orbits $X_i$.  We summarize this as an explicit algorithm.  

\begin{algorithm} \label{CW alg}
(The Calegari--Walker algorithm) \\
\parbox{.8cm}{\textit{Input:}}  
\parbox[t]{\textwidth}{ \vspace{-.5cm}
\begin{enumerate}
\item  A word $w$ in $n$ letters
\item  Rational numbers $s_i = p_i/q_i \in \Q$, %in lowest terms, 
for $i = 1, ... n$.  
\item  A choice of $p_i/q_i$-periodic sets $X_i$ of cardinality $q_i$.  
\\ (The output of the algorithm depends only on the linear ordering of $\bigcup_i X_i$, \\or equivalently, its cyclic order after projection to $S^1$).  
\end{enumerate} 
}
\textit{Process:} 
Let $f_i$ be a $(X_i, p_i/q_i)$ maximal monotone map.  Compute $\rott(w(f_1, ..., f_n))$ by finding a periodic orbit as in Proposition \ref{max prop}. \medskip
\\
\textit{Output:} 
The result is the maximum value of $\rott(w(g_1, ... g_n))$, where $g_i$ are any monotone maps (equivalently, any elements of $\Homeo_\Z(\R)$) with periodic sets $X_i$ and $\rott(g_i) = p_i/q_i$. \medskip
\\ 
\textit{Variation:}  To compute $R_w(s_1, ... s_n)$, take $p_i/q_i$ to be in lowest terms, run the algorithm over all possible configurations of linear orderings on the union of the $X_i$ and take the maximum output.  
\end{algorithm}

Note that, in the special case where $\rott(f_i) = 0$, one can effectively run the algorithm where $X_i$ is any finite subset of $\fix(f_i)$ -- this is allowed by viewing a cardinality $q$ subset of $\fix(f_i)$ as a $0/q$--periodic orbit for $f_i$.  
In \cite{Invent} one can find several worked examples computing the maximum value of $\rott(f_1 f_2 ... f_n)$ given constraints on the fixed sets of $f_i$.  
%give precise reference -- just the comparison.  

%----------------------
\subsection{Applications}
As an application, Algorithm \ref{CW alg} can be used to give a complete closed-form answer to Problem \ref{rot prob} for the word $fg$, and significant information in other cases.  
We start with a general lemma bounding the denominator of $R_w$. 

\begin{lemma} \label{denom lem}
Suppose $s_i$ are 
%nonnegative  (?)
rational, that $s_1 = p/q$, and that $w$ is a word in $f_1, ... f_n$.  If 
$R_{w}(s_1, ... s_n) = n/m$ in lowest terms, then $m \leq q$.  
\end{lemma}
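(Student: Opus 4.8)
The plan is to use the Calegari--Walker algorithm to realize $R_w(s_1, \dots, s_n)$ \emph{exactly} as the rotation number of a word in maximal monotone maps, and then to observe that maximal monotone maps have very small image. First I would make two harmless reductions. Replacing $p/q$ by its lowest-terms expression does not increase $q$, so we may assume $p/q$ is in lowest terms; then $q = q_1$. Second, I would cyclically permute $w$ so that it begins with the letter $f_1$ --- this uses, and so presumes, that $f_1$ actually occurs in $w$, which is the case of interest. A cyclic permutation of $w$ replaces $w(g_1, \dots, g_n)$ by a conjugate of itself in $\Homeo_\Z(\R)$ for every choice of $g_i \in \Homeo_\Z(\R)$, so by conjugacy-invariance of $\rott$ (Proposition \ref{rot prop}) it changes neither the individual values $\rott(w(g_1, \dots, g_n))$ nor the supremum $R_w$.

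Next I would apply Algorithm \ref{CW alg} together with Proposition \ref{homeo prop}: writing each $s_i = p_i/q_i$ in lowest terms, there is a choice of $\Z$-periodic sets $X_i$ of cardinality $q_i$ such that, letting $f_i$ be the $(X_i, p_i/q_i)$ maximal monotone map, the word $W := w(f_1, \dots, f_n)$ satisfies $\rott(W) = R_w$. In particular the set $X_1$ has exactly $q$ points modulo $\Z$. The key structural point is that, by Definition \ref{max mon def}, a maximal monotone map $f_i$ sends all of $\R$ into $X_i$; since $w$ begins with $f_1$, the monotone map $W$ therefore has image contained in $X_1$. Hence $W$ restricts to a non-decreasing, integer-equivariant self-map of $X_1$.

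The remaining step is a finite-set counting argument. Enumerate $X_1 = \{x_j\}_{j \in \Z}$ with $x_j < x_{j+1}$ and $x_{j+q} = x_j + 1$; then $W(x_j) = x_{\sigma(j)}$ for a non-decreasing $\sigma : \Z \to \Z$ with $\sigma(j+q) = \sigma(j) + q$, and $\sigma$ descends to a self-map $\bar\sigma$ of the $q$-element set $\Z / q\Z$. Picking $\bar j$ on a cycle of $\bar\sigma$, say of length $c$ with $1 \le c \le q$, we get $\sigma^c(j) = j + dq$ for some $d \in \Z$, i.e.\ $W^c(x_j) = x_j + d$. By homogeneity of $\rott$ and the definition of rotation number, this forces $R_w = \rott(W) = d/c$ with $1 \le c \le q$. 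If $d/c = n/m$ in lowest terms then $m$ divides $c$, so $m \le c \le q$, which is the claim.

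I expect the only delicate point to be the bookkeeping with the algorithm: one must be sure that, for rational constraints, $R_w$ is genuinely \emph{attained} by some configuration of maximal monotone maps (rather than merely approximated), and that in that configuration the set $X_1$ has exactly $q = q_1$ points mod $\Z$. Both facts are built into the statement of Algorithm \ref{CW alg} and Proposition \ref{homeo prop}. Granting them, the geometric heart of the argument --- ``$W$ has image in the $q$-point set $X_1$, so $\rott(W)$ has denominator at most $q$'' --- is immediate, and the rest is routine.
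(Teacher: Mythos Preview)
Your proof is correct and follows essentially the same approach as the paper's: cyclically permute $w$ so that the outermost letter is $f_1$, apply the Calegari--Walker algorithm, and observe that the resulting monotone map $W$ has image (and hence every periodic orbit) contained in the $q$-point set $X_1$. Your write-up is considerably more careful than the paper's one-line sketch --- in particular you make the counting argument on $X_1$ explicit --- but the core idea is identical.
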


\begin{proof}
Apply Algorithm \ref{CW alg}.  Since $w$ ends in $f_1$, and $f_1(\mathbf{X}) \subset X_1$, a closed orbit for $w$ be a subset of $X_1$, hence is a $\Z$-periodic set of cardinality $m$, for some $m \leq q$. 
\end{proof}

\begin{remark}
As $\rott(w)$ is invariant under cyclic permutations of $w$ (this follows from conjugation-invariance), Lemma \ref{denom lem} shows that the denominator of $R_w(s_1, ... s_n)$ is bounded by $\min \{q \mid s_i = p/q \}$.  
\end{remark}

\begin{theorem}[Theorem 3.9 in \cite{CW}] \label{CW thm}   
For any $s, t \in \R$,
$$R_{fg}(s, t) = \sup \limits_{p_1/q\leq s, \,\, p_2/q \leq t} \frac{p_1 + p_2 + 1}{q}$$
where $p_i$, $q$ are integers, $q>0$.  
\end{theorem}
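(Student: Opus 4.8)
The plan is to prove the two inequalities $R_{fg}(s,t)\geq(\text{RHS})$ and $R_{fg}(s,t)\leq(\text{RHS})$ separately; the first is an explicit construction, and the second runs through the Calegari--Walker algorithm and terminates in a combinatorial estimate. For the \emph{lower bound}, fix integers $p_1,p_2$ and $q\geq1$ with $p_1/q\leq s$ and $p_2/q\leq t$. I would take $\Z$-periodic sets $X=\{\dots<x_i<x_{i+1}<\dots\}$ and $Y=\{\dots<y_j<y_{j+1}<\dots\}$ of cardinality $q$ placed in the interleaved cyclic order $\dots<x_i<y_i<x_{i+1}<\dots$, and let $f$, $g$ be the $(X,p_1/q)$- and $(Y,p_2/q)$-maximal monotone maps of Definition \ref{max mon def}. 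A one-line computation from the definition gives $g(x_i)=y_{i+p_2}$ and then $f(y_{i+p_2})=x_{i+p_2+1+p_1}$, so $fg(x_i)=x_{i+p_1+p_2+1}$ and hence $(fg)^q(x_i)=x_i+(p_1+p_2+1)$, i.e. $\rott(fg)=(p_1+p_2+1)/q$, while $\rott(f)=p_1/q$ and $\rott(g)=p_2/q$ directly. By Proposition \ref{homeo prop} the maps $f,g$ may be replaced by homeomorphisms realizing the same rotation numbers for $f$, $g$ and $fg$, so $R_{fg}(p_1/q,p_2/q)\geq(p_1+p_2+1)/q$; monotonicity of $R_{fg}$ (Proposition \ref{rat s prop}) upgrades this to $R_{fg}(s,t)\geq(p_1+p_2+1)/q$, and taking the supremum over admissible $(p_1,p_2,q)$ gives ``$\geq$''.

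For the \emph{upper bound} I would first reduce to the rational case. Replacing $(s,t)$ by $(s-\lfloor s\rfloor,\,t-\lfloor t\rfloor)$ shifts both sides of the claimed identity by the same integer, so we may assume $s,t\in[0,1)$. Both sides are non-decreasing in $(s,t)$ and are determined by their values at rational arguments by approximation from above --- for $R_{fg}$ this is Corollary \ref{rat s cor}, via Proposition \ref{rat s prop}; for the right-hand side it is elementary --- so it suffices to prove ``$\leq$'' when $s=p_1/q_1$ and $t=p_2/q_2$ are in lowest terms. If $\tilde f\in\Homeo_\Z(\R)$ has $\rott(\tilde f)=p_1/q_1$, then by Proposition \ref{rot prop}(6) it has a periodic orbit $X$ of cardinality exactly $q_1$; on $X$ the map $\tilde f$ shifts indices by $p_1$, so agrees there with the $(X,p_1/q_1)$-maximal monotone map $f_{\max}$, while on each complementary interval $\tilde f$ is strictly below $f_{\max}$ --- hence $\tilde f\leq f_{\max}$ pointwise, and likewise $\tilde g\leq g_{\max}$ for a suitable cardinality-$q_2$ orbit $Y$. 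Since $\rott$ is monotone for the pointwise order on monotone maps, $\rott(\tilde f\tilde g)\leq\rott(f_{\max}g_{\max})$; therefore $R_{fg}(s,t)$ is the maximum, over all cyclic orders of $X\cup Y$, of $\rott(f_{\max}g_{\max})$ --- precisely the output of Algorithm \ref{CW alg} on the word $fg$.

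It remains --- and this is the \emph{main obstacle} --- to show that for every such configuration $\rott(f_{\max}g_{\max})$ is bounded by the claimed supremum. Write $f=f_{\max},g=g_{\max}$, and (general position allows $X\cap Y=\varnothing$); since $fg(X\cup Y)\subset X$, the $fg$-orbit of a suitable $x_{m_0}\in X$ lies on the eventual periodic cycle. Track it as $x_{m_0}\to y_{n_0}\to x_{m_1}\to y_{n_1}\to\cdots\to x_{m_N}=x_{m_0}+P$, with $y_{n_k}=g(x_{m_k})$ and $x_{m_{k+1}}=f(y_{n_k})$, so that $\rott(fg)=P/N$. Decomposing $(x_{m_k},x_{m_{k+1}}]=(x_{m_k},y_{n_k}]\sqcup(y_{n_k},x_{m_{k+1}}]$ and counting points of $X$ and of $Y$ in each piece yields the bookkeeping identities $m_{k+1}-m_k=u_k+p_1+1$ and $n_{k+1}-n_k=v_k+p_2+1$, where $u_k=|X\cap(x_{m_k},y_{n_k}]|$ and $v_k=|Y\cap(y_{n_k},x_{m_{k+1}}]|$; summing over $k$ and using $m_N-m_0=Pq_1$, $n_N-n_0=Pq_2$ then pins down $\sum_k u_k$ and $\sum_k v_k$ in terms of $P$ and $N$. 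Because the right-hand side of the theorem dominates $(\lfloor Np_1/q_1\rfloor+\lfloor Np_2/q_2\rfloor+1)/N$ (take $q=N$), the whole upper bound comes down to the single estimate $P\leq\lfloor Np_1/q_1\rfloor+\lfloor Np_2/q_2\rfloor+1$. Proving this is where the real work lies: it is a packing statement about how a cardinality-$q_1$ and a cardinality-$q_2$ $\Z$-periodic set can interleave along a length-$P$ fundamental domain carrying an $N$-point $fg$-return orbit, and the hypothesis that $x_{m_0}$ --- hence every $x_{m_k}$ --- lies on the cycle is precisely what is needed to bound the $u_k$ and $v_k$. Granting this inequality, $\rott(fg)=P/N\leq(\lfloor Np_1/q_1\rfloor+\lfloor Np_2/q_2\rfloor+1)/N$ finishes the proof.
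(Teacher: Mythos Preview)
Your lower bound is correct and is exactly the paper's Lemma \ref{exercise lem}; your reduction of the upper bound to the rational case and then to the Calegari--Walker algorithm on maximal monotone maps is also correct and matches the paper.

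The gap is precisely where you say it is. You set up the bookkeeping $\sum_k u_k = Pq_1 - N(p_1+1)$ and $\sum_k v_k = Pq_2 - N(p_2+1)$ and then assert the packing inequality $P \leq \lfloor Np_1/q_1\rfloor + \lfloor Np_2/q_2\rfloor + 1$, but the only obvious constraint on the $u_k,v_k$ is non-negativity, and that yields \emph{lower} bounds on $P$, not the upper bound you need. You do not explain what additional structure of the periodic cycle bounds the skips from above, and ``granting this inequality'' is granting the whole theorem: the inequality is equivalent to the upper bound you are trying to prove.

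The paper's route through this step is different in kind. Rather than estimating $P$ directly from a fixed configuration, Calegari--Walker \emph{rearrange} the points of $X$ and $Y$ into a standard form in a way that preserves the $fg$-periodic orbit (this re-ordering is the technical content). Once in standard form, one reads off an integer $l$ with $l/m \leq p_1/q_1$ and $(n-l-1)/m \leq p_2/q_2$, where $\rott(f_{\max}g_{\max}) = n/m$; this exhibits $n/m = \bigl(l + (n-l-1) + 1\bigr)/m$ explicitly as one of the fractions in the supremum. So the paper produces the witnessing triple $(l,\,n-l-1,\,m)$ constructively, whereas your approach would have to squeeze the same conclusion out of a counting argument that, as written, is pointing in the wrong direction.
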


The proof uses a lemma, which we leave as an exercise. 

\begin{lemma}  \label{exercise lem}
For any integers $p, k$, and $q$, we have $R_{fg}(\frac{p}{q}, \frac{k}{q}) \geq \frac{p+k+1}{q}$.  In fact, if $X$ and $Y$ are $\Z$-periodic sets of cardinality $q$, ordered as 
$$ ... x_1 < y_1 < x_2 < y_2 <  x_3...$$
then the composition of the $(X, p/q)$ and $(Y, k/q)$ maximal monotone maps has rotation number $(p+k+1)/q$.  
\end{lemma}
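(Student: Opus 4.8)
The plan is to compute $\rott(fg)$ directly by tracking a single orbit, exactly in the style of Example \ref{CW ex1}, and then to read off the inequality for $R_{fg}$.

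First I would fix notation. Enumerate $X = \{x_i\}$ and $Y = \{y_i\}$ in increasing order; the interleaving hypothesis gives $x_i < y_i < x_{i+1}$ for all $i$, and $\Z$-periodicity of cardinality $q$ gives $x_{i+q} = x_i + 1$ and $y_{i+q} = y_i + 1$. Let $f$ be the $(X, p/q)$ maximal monotone map and $g$ the $(Y, k/q)$ maximal monotone map, so that $f(x) = x_{i+p}$ for $x \in (x_{i-1}, x_i]$ and $g(y) = y_{j+k}$ for $y \in (y_{j-1}, y_j]$, as in Definition \ref{max mon def}.

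Next comes the one real computation. Evaluate $fg$ on a point of $X$: since $y_{j-1} < x_j \leq y_j$, the point $x_j$ lies in the interval $(y_{j-1}, y_j]$, so $g(x_j) = y_{j+k}$; and since $x_{j+k} < y_{j+k} \leq x_{j+k+1}$, the point $y_{j+k}$ lies in $(x_{j+k}, x_{j+k+1}]$, so $f(y_{j+k}) = x_{j+k+1+p}$. Therefore $fg(x_j) = x_{j+(p+k+1)}$, and inductively $(fg)^n(x_j) = x_{j+n(p+k+1)}$. Taking $n = q$ and applying $\Z$-periodicity, $(fg)^q(x_j) = x_{j+q(p+k+1)} = x_j + (p+k+1)$. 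By the definition of the rotation number (equivalently, property 6 of Proposition \ref{rot prop}), this forces $\rott(fg) = (p+k+1)/q$; likewise $f^q(x_i) = x_i + p$ and $g^q(y_i) = y_i + k$ give $\rott(f) = p/q$ and $\rott(g) = k/q$. Note that nothing in this argument requires $p/q$ or $k/q$ to be in lowest terms.

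Finally, to obtain $R_{fg}(p/q, k/q) \geq (p+k+1)/q$, I would invoke Proposition \ref{homeo prop}: there exist $\hat f, \hat g \in \Homeo_\Z(\R)$ having $X$, $Y$ as periodic orbits respectively and with $\rott(\hat f \hat g) = \rott(fg) = (p+k+1)/q$, while $\rott(\hat f) = p/q$ and $\rott(\hat g) = k/q$. Since $R_{fg}$ is by definition a supremum over all pairs in $\Homeo_\Z(\R)$ with the prescribed rotation numbers, the inequality follows. The only point demanding the slightest attention is applying the interleaving inequality in the correct direction at each of the two steps ($x_j$ sits just below $y_j$, while $y_{j+k}$ sits just above $x_{j+k}$); there is no genuine obstacle, which is precisely why the statement was posed as an exercise.
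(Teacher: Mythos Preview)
Your proof is correct. The paper explicitly leaves this lemma as an exercise and gives no proof of its own, so there is nothing to compare against; your argument is precisely the intended direct orbit computation in the spirit of Example~\ref{CW ex1}, and your use of Proposition~\ref{homeo prop} to pass from monotone maps to genuine homeomorphisms for the $R_{fg}$ bound is exactly right.
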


Given this lemma, we outline the strategy of the proof of Theorem \ref{CW thm}. 
\begin{proof}[Proof outline]
By Proposition \ref{rat s prop}, it suffices to prove the formula for rational $s$ and $t$.  
Given $s= p_1/q_1$ and $t = p_2/q_2$, let $X$ and $Y$ be $\Z$-periodic sets of cardinality $q_1$ and $q_2$ respectively, with $f_{\mathrm{max}}$ and $g_{\mathrm{max}}$  the maximal $(X, p_1/q_1)$ and $(Y, p_2/q_2)$ monotone maps.  Suppose that the configuration of $X$ and $Y$ is such that $\rott(f_{\mathrm{max}} g_{\mathrm{max}})$ is maximized.  

By Lemma \ref{denom lem}, $\rott(f_{\mathrm{max}}g_{\mathrm{max}})$ is rational of the form $n/m$, (and hence there is a $n/m$--periodic orbit) for some $n\leq \min\{q_1, q_2\}$.    
Using this, Calegari and Walker give a construction for re-ordering the points of $X$ and $Y$, putting them into a standard form, without affecting this periodic orbit.   This is the technical work of the proof.   From this standard form, one can read off the estimates
$$p_1/q_1 \geq l/m \text{ and } p_2/q_2 \geq (n-l-1)/m$$ 
for some $l$.   Hence 
Hence $R_{fg}(p_1/q_1, p_2/q_2) = n/m = \frac{l + (n-l-1) + 1}{m}$, which is in the desired form.

\end{proof}

\boldhead{Commutators and Milnor--Wood.}
In the study of surface group actions, we are particularly concerned with commutators.  The Calegari--Walker algorithm does not apply directly to a commutator as it is a word involving inverses; however, Theorem \ref{CW thm} can still be used to give an answer.  

\begin{lemma}[Example 4.9 in \cite{CW}] \label{comm lemma}
Let $f, g \in \Homeo_\Z(\R)$. 
\begin{enumerate}[i)]
\item If $\rott(f) \notin \Q$ or $\rott(g) \notin \Q$, then $\rott [f,g]= 0$.
\item If $\rott(f)$ or $\rott(g)$ is of the form $p/q$, where $p/q \in \Q$ is in lowest terms (so $q>0$), then 
$$\rott [f,g]  \leq 1/q.$$
\end{enumerate}
\end{lemma}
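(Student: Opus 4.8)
The plan is to reduce both statements to the Calegari--Walker formula of Theorem \ref{CW thm} by writing the commutator as a product of \emph{two} letters whose rotation numbers we can control. The starting point is the pair of elementary identities
\[
[f,g] = (f g f^{-1})\, g^{-1} = f\, (g f^{-1} g^{-1}),
\]
together with the observation (conjugacy invariance, Proposition \ref{rot prop}) that $\rott(f g f^{-1}) = \rott(g)$ and $\rott(g f^{-1} g^{-1}) = \rott(f^{-1}) = -\rott(f)$, while $\rott(g^{-1}) = -\rott(g)$ and $\rott(f^{-1}) = -\rott(f)$. Since $R_{fg}(a,b)$ is by definition the supremum of $\rott(uv)$ over \emph{all} $u,v \in \Homeo_\Z(\R)$ with $\rott(u)=a$ and $\rott(v)=b$, each decomposition yields an upper bound:
\[
\rott[f,g] \le R_{fg}(\rott(g),\,-\rott(g)), \qquad \rott[f,g] \le R_{fg}(\rott(f),\,-\rott(f)).
\]
So everything comes down to evaluating (or bounding) $R_{fg}(r,-r)$.

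The next step is to plug $s=r$, $t=-r$ into Theorem \ref{CW thm}. The constraints $p_1/q \le r$ and $p_2/q \le -r$ with $q>0$ force $p_1 + p_2 \le 0$. If a term $\tfrac{p_1+p_2+1}{q}$ is strictly positive, then $p_1 + p_2 \ge 0$, hence $p_1+p_2 = 0$, and the two constraints collapse to the single equation $p_1 = qr$. When $r$ is irrational this is impossible, since the left side is an integer and the right side is not; hence every term is $\le 0$ and $R_{fg}(r,-r)\le 0$. When $r = p/q_0$ in lowest terms, $p_1 = qr$ forces $q_0 \mid q$ (using $\gcd(p,q_0)=1$), hence $q \ge q_0$ and the term equals $1/q \le 1/q_0$; terms with $p_1+p_2 \le -1$ are $\le 0$. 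Therefore $R_{fg}(r,-r) \le 1/q_0$. This computation is the crux of the proof; I expect it to be short, but it is the one place where the lowest-terms hypothesis must be used carefully, to pass from $p_1 = qr$ to $q_0 \mid q$.

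Given this, part (ii) is immediate: if $\rott(g) = p/q$ in lowest terms, apply the first decomposition to get $\rott[f,g] \le R_{fg}(p/q,-p/q) \le 1/q$; if instead $\rott(f) = p/q$ in lowest terms, apply the second. For part (i), assume $\rott(g)\notin\Q$ (the case $\rott(f)\notin\Q$ is symmetric, swapping the roles of the two decompositions). The bound above gives $\rott[f,g] \le R_{fg}(\rott(g),-\rott(g)) \le 0$. For the matching lower bound, apply the same estimate to $[g,f] = g\,(f g^{-1} f^{-1})$, whose second factor is conjugate to $g^{-1}$ and hence again has irrational rotation number $-\rott(g)$; this gives $\rott[g,f] \le R_{fg}(\rott(g),-\rott(g)) \le 0$. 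Since $[g,f] = [f,g]^{-1}$ and $\rott(h^{-1}) = -\rott(h)$ for all $h$, this says $\rott[f,g] \ge 0$, so $\rott[f,g] = 0$.

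The only real obstacle is bookkeeping: in each case one must choose the decomposition of the commutator whose ``middle'' letter carries the rotation number one actually controls — irrational, or rational with the prescribed denominator. Once that is arranged, Theorem \ref{CW thm} does all the work and the remaining content is the elementary divisibility step bounding the denominator. (One could moreover combine the two bounds in part (ii), using invariance of $\rott$ under cyclic permutation, to get the sharper $\rott[f,g] \le 1/\max(q_f,q_g)$ when both rotation numbers are rational, but this is not needed for the stated lemma.)
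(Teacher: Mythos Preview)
Your argument is correct. For part (ii) it is essentially the paper's proof: both decompose $[f,g]$ as a product of two elements with rotation numbers $r$ and $-r$ and feed this into Theorem~\ref{CW thm}, and your divisibility step is exactly the paper's observation that the supremum is attained only when $p_1/q = p/q$ in lowest terms.

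For part (i), however, you take a genuinely different and cleaner route. The paper does \emph{not} use Theorem~\ref{CW thm} here; instead it first treats the special case where $f$ is conjugate to an irrational translation by exhibiting an explicit fixed point of $[f,g]$, and then reduces the general case to this one by approximating $f$ by $C^2$ diffeomorphisms and invoking Denjoy's theorem (so that any irrational-rotation-number approximant is conjugate to a translation). Your approach bypasses all of this: the same computation of $R_{fg}(r,-r)$ that you already need for part (ii) gives $R_{fg}(r,-r)\le 0$ when $r\notin\Q$, and the symmetry $[g,f]=[f,g]^{-1}$ supplies the matching lower bound. This is more elementary (no Denjoy, no approximation), more uniform with part (ii), and arguably what the paper's own remark ``we present an argument slightly different from that in \cite{CW}'' is moving away from. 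The paper's approach does have the minor virtue of giving a concrete fixed point in the translation case, but for the stated lemma yours is the more economical proof.
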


\begin{proof}  We present an argument slightly different from that in \cite{CW}.  

For the rational case, suppose $\rott(f) = p/q$; by composing with translations (which does not affect the value of $[f,g]$) we may assume $0 \leq p/q <1$.  Then 
$$\rott(gf^{-1}g^{-1}) = -p/q;$$ 
We apply Theorem \ref{CW thm}:
If $j/m \leq p/q$ and $k/m \leq -p/q$, with strict inequality in some case, then $\frac{j + k}{m} < 0$, and since $j$ and $k$ are integers, we have 
$\frac{j + k +1}{m} \leq 0.$
If equality holds, then the maximum value is taken when $j/m = p/q$ is in lowest terms, where
$$\frac{p -p + 1}{q} = 1/q,$$
which implies that $\rott[f,g] \leq 1/q$.  

For the irrational case, suppose first that $f$ is conjugate to translation $T_\theta$.  After conjugacy we may assume $f(x) = x + \theta$, and so there exists some $x$ such that $g f^{-1} g^{-1}(x) = x- \theta$ (this is easy -- if $g f^{-1} g^{-1}(x) < x- \theta$ for all $x$, then $\rot(g f^{-1} g^{-1}) < \theta$, the same argument works if $g f^{-1} g^{-1}(x) > x- \theta$).  
Thus, $x$ is a fixed point for $[f,g]$, and $\rot[f,g] = 0$.   

In general, one can take a sequence $f_i$ of $C^2$ diffeomorphisms  that $C^0$--approximate $f$, i.e. such that $f_i(x) \to f(x)$ and $f_i^{-1}(x) \to f^{-1}(x)$ for all $x \in \R$.  By continuity of rotation number, $\rott(f_i) \to \rott(f)$, and we may take a subsequence such that $\rott(f_i)$ are either rational with denominator $q_i > i$, in which case $\rott[f_i, g] < 1/i$, or $\rott(f_i) \notin \Q$.   In this irrational case, Denjoy's theorem (which we hinted at in Section \ref{intro rig sec}, but see e.g. \cite[Ch. 3]{Navas} for a precise statement)  implies that each $f_i$ is \emph{conjugate} to an irrational rotation, so $\rott([f_i, g]) = 0$.  Continuity of $\rott$ applied to this sequence gives $\rott[f, g] = 0$.  
\end{proof}

As  a further illustration of the power of the algorithmic technique, we give a quick proof of the Milnor--Wood inequality.

\begin{theorem}(Milnor--Wood inequality, equivalent formulation) \\
Let $\rho \in \Hom(\Gamma_g, \Homeo(S^1))$.  Then 
$$\rott \left( \prod [\rho(a_i), \rho(b_i)] \right) \leq 2g-2.$$
\end{theorem}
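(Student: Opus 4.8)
The plan is to combine three facts already in hand: the sharp quasi-additivity $\rott(fg)\le\rott(f)+\rott(g)+1$ for $f,g\in\Homeo_\Z(\R)$ (which follows from Theorem \ref{CW thm}, since $R_{fg}(s,t)=\sup_{p_1/q\le s,\ p_2/q\le t}\frac{p_1+p_2+1}{q}\le s+t+1$); the commutator bound $\rott[f,g]\le 1/q\le 1$ of Lemma \ref{comm lemma}; and the exact additivity of $\rott$ against integer translations from Lemma \ref{additive lem}. The naive move --- iterating quasi-additivity across a product of $g$ commutators --- spends $g-1$ units of defect and yields only $2g-1$; the extra unit is recovered by exploiting that $\rho$ is an honest representation, so the product of commutators is \emph{literally} a translation and the outermost split is governed by Lemma \ref{additive lem} with no defect at all.

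First I would set up notation. Choosing arbitrary lifts $\tilde a_i$ of $\rho(a_i)$ and $\tilde b_i$ of $\rho(b_i)$, each commutator $c_i:=[\tilde a_i,\tilde b_i]$ is a well-defined element of $\Homeo_\Z(\R)$ (Section \ref{euler subsec}), and the product $W:=c_1c_2\cdots c_g$ projects to $\prod_i[\rho(a_i),\rho(b_i)]=\rho\!\left(\prod_i[a_i,b_i]\right)=\id$ in $\Homeo(S^1)$. Hence $W=T^e$ for some $e\in\Z$, and by Definition \ref{eu def} together with Proposition \ref{rot prop}(1) this integer is $e=\euler(\rho)=\rott(W)$. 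So the goal becomes $e\le 2g-2$.

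Next I would prove a sublemma by induction on $m$: if $d_1,\dots,d_m\in\Homeo_\Z(\R)$ all satisfy $\rott(d_j)\le 1$, then $\rott(d_1d_2\cdots d_m)\le 2m-1$. The case $m=1$ is the hypothesis; for the inductive step, apply the defect-$1$ quasi-additivity to the pair $d_1$ and $d_2\cdots d_m$ and use the inductive hypothesis on the $m-1$ maps $d_2,\dots,d_m$ to get $\rott(d_1\cdots d_m)\le\rott(d_1)+\rott(d_2\cdots d_m)+1\le 1+(2m-3)+1=2m-1$. Then, for the conclusion, I would write $W=c_1\cdot P$ with $P:=c_2c_3\cdots c_g$; since $W=T^e$, Lemma \ref{additive lem} gives $e=\rott(c_1)+\rott(P)$. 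By Lemma \ref{comm lemma} every $\rott(c_i)\le 1$, so $\rott(c_1)\le 1$ and, applying the sublemma to the $g-1$ maps $c_2,\dots,c_g$, $\rott(P)\le 2(g-1)-1=2g-3$. Therefore $e\le 1+(2g-3)=2g-2$.

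The only step that really needs care is securing the defect-$1$ quasi-additivity: with the defect-$2$ bound of Proposition \ref{rot prop} the argument only delivers $e\le 2g$, so the sharp constant $2g-2$ genuinely rests on Theorem \ref{CW thm}. Everything else is bookkeeping. Finally, I would note that the matching lower bound $e\ge-(2g-2)$, and hence the full Milnor--Wood inequality $|\euler(\rho)|\le 2g-2$, follows by rerunning the argument after conjugating $\rho$ by an orientation-reversing homeomorphism of $S^1$, which reverses the sign of the Euler number.
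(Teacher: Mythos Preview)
Your proof is correct and follows essentially the same route as the paper: both use Lemma~\ref{comm lemma} to bound each commutator by $1$, iterate the defect-$1$ quasi-additivity extracted from Theorem~\ref{CW thm} to get $\rott$ of a product of $g-1$ commutators at most $2g-3$, and then invoke Lemma~\ref{additive lem} (exact additivity when the full product is an integer translation) for the last factor. The only cosmetic difference is that you split off the first commutator $c_1$ whereas the paper splits off the last one $c_g$, and you package the induction as an explicit sublemma.
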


\begin{proof}
By Lemma \ref{comm lemma}, $\rott \left( [\rho(a_i), \rho(b_i)] \right) \leq 1$.  Theorem \ref{CW thm} now gives the upper bound $\rott \left( [\rho(a_1), \rho(b_1)] [\rho(a_2), \rho(b_2)] \right) \leq 3$, and inductively,  
$$\rott \left( \prod_{i=1}^{g-1} [\rho(a_i), \rho(b_i)] \right) \leq 2g-3.$$
Since $\prod_i [\rho(a_i), \rho(b_i)]$ is an integer translation, it follows from Lemma \ref{additive lem} that 
$$\rott \left( \prod_{i=1}^{g} [\rho(a_i), \rho(b_i)] \right) = \rott \left( \prod_{i=1}^{g-1} [\rho(a_i), \rho(b_i)] \right) + \rott \left( [\rho(a_g), \rho(b_g)] \right) \leq 2g-2.$$ 
\end{proof}

\begin{remark}  \label{equality rk}
In fact, Theorem \ref{CW thm} does more than give the estimate
$$\rott \left( \prod_{i=1}^{g-1} [\rho(a_i), \rho(b_i)] \right) \leq 2g-3,$$
it implies that equality is achieved \emph{only if} $\rott \left( [\rho(a_i), \rho(b_i)]  \right)= 1$.  We'll use this fact in the next section.  
\end{remark}

\boldhead{Understanding $R_w(s,t)$.}
We conclude this section with a short remark on the interesting problem of understanding the function $R_w$ itself.  
Figure \ref{zig fig} shows the plot of $R_w(s,t)$ the word $w = fgffg$, with the $s$--axis on the left side, and $t$--axis on the right.  Because of its stairstep nautre (which all graphs of $R_w(s,t)$ share), Calegari and Walker call the graph a ``ziggurat".  

Theorem 3.11 in \cite{CW} gives a more precise description of the ``stairstep" nature of the ziggurat, as well as a faster algorithm to produce the graph.   There are many open questions -- for instance, where exactly do the jumps occur and what values are taken at these points?  (See the definition of \emph{slippery points} and the ``slippery conjecture" in \cite{CW}).   Recent progress on describing \emph{self-similarity phenomena} in ziggurats and the \emph{length} of certain steps along the ``edges" was made by A. Gordenko in \cite{Gordenko} and furthered by S. Chowdhury \cite{Chowdhury}.  

\begin{figure*}
  \centerline{
    \mbox{\includegraphics[width=3in]{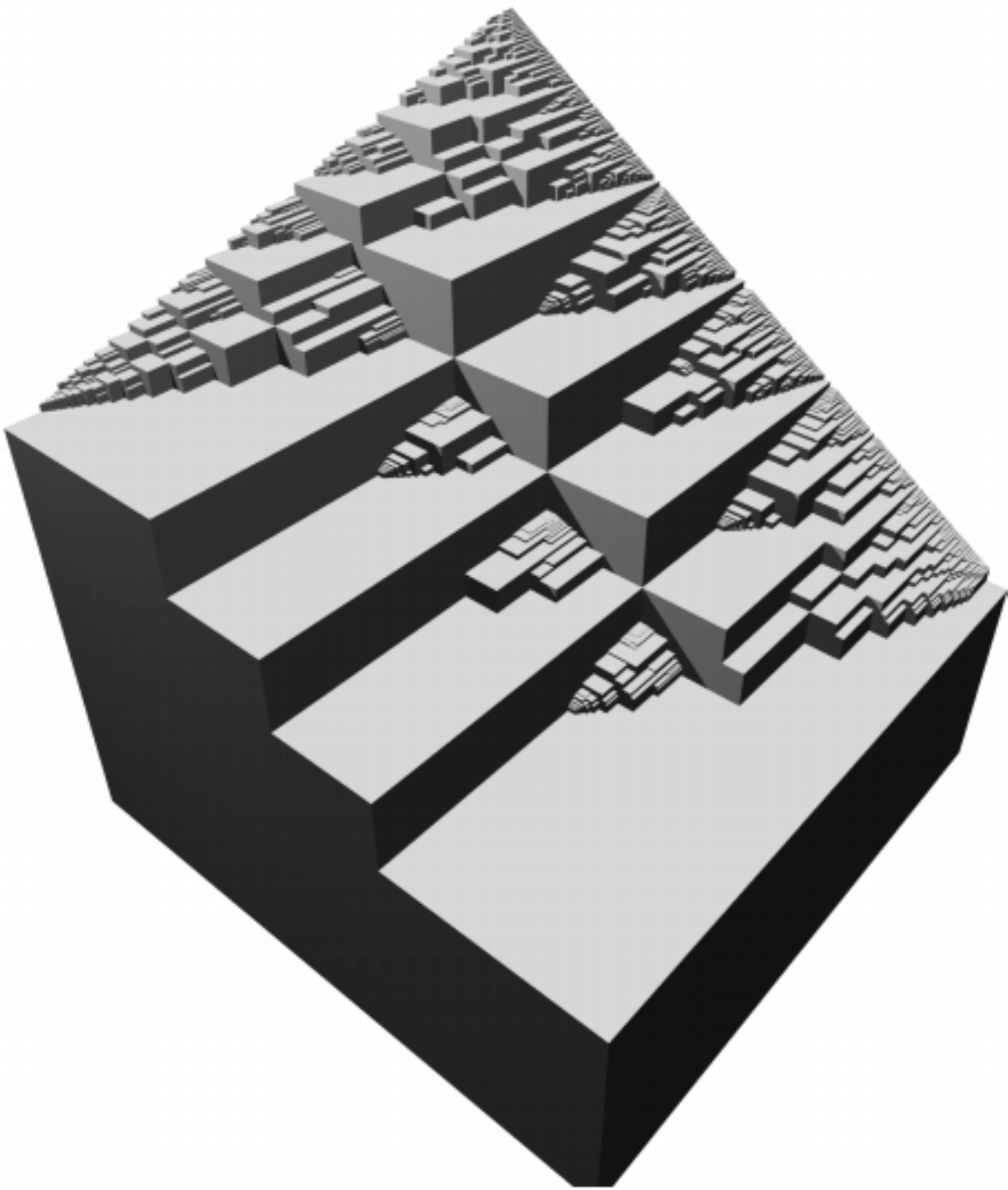}}}
 \caption{\small A 3-D plot of $R_{fgffg}(s,t)$, $0 \leq s, t < 1$, from \cite{CW}}
  \label{zig fig}
  \end{figure*}

%----------------------------------
\section{Rigidity of geometric representations}  \label{pf sec}

In this section, we explain how to use rotation number coordinates and the Calegari--Walker algorithm to prove the ``geometricity implies full rigidity" Theorem \ref{invent thm}.    

\subsection{Matsumoto's rigidity theorem}
As a warm-up and first case, we give a short proof of Corollary \ref{mat rig cor} on maximal representations -- without using Matsumoto's Theorem \ref{mats thm}.

\begin{proposition}[Corollary \ref{mat rig cor}, equivalent formulation]  \label{mat rig}
Suppose $\rho_0$ is a representation with $\rott \left( \prod [\rho_0(a_i), \rho_0(b_i)] \right) = 2g-2$.  Then $\rho_0$ is fully rigid. 
\end{proposition}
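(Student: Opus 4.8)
The strategy is to re-derive, in the maximal case, the content of Matsumoto's ``maximal implies geometric'' theorem (Theorem \ref{mats thm}) using only the Calegari--Walker machinery of Section \ref{CW sec} --- so as not to invoke Theorem \ref{mats thm} itself --- and then to use the classical observation (Section \ref{mostow subsec}) that all discrete, faithful representations $\Gamma_g \to \PSL(2,\R)$ are conjugate in $\Homeo(S^1)$. The first move is a reduction to a whole connected component: since $\euler(\rho) = \rott\!\left(\prod_i [\rho(a_i),\rho(b_i)]\right)$ is continuous on $\Hom(\Gamma_g,\Homeo(S^1))$ (immediate from continuity of $\rott$, Proposition \ref{rot prop}) and $\Z$-valued, it is constant on the connected component of $\rho_0$. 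Hence every $\rho$ in that component also has $\euler(\rho) = 2g-2$, and it suffices to prove that \emph{any} two representations with Euler number $2g-2$ are semi-conjugate.

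Next I would extract rigid dynamical information from the extremal equality. Running the proof of the Milnor--Wood inequality backwards (Theorem \ref{CW thm}, Lemma \ref{additive lem}, Remark \ref{equality rk}), the value $2g-2$ is attained only if $\rott([\rho(a_i),\rho(b_i)]) = 1$ for every $i$, and then Lemma \ref{comm lemma} forces each standard generator to act with a fixed point (an irrational rotation number would give commutator rotation number $0$; a rational one $p/q$ in lowest terms with $q>1$ would give commutator rotation number at most $1/q<1$). In particular condition i) of Theorem \ref{rot coord} holds automatically for any two representations in the component. The crucial further step is to combine the canonical-lift identity $\prod_i [\tilde\rho(a_i),\tilde\rho(b_i)] = T^{2g-2}$ (translation by $2g-2$) with the analysis of \emph{when} equality is achieved in Theorem \ref{CW thm}: after replacing each homeomorphism by its associated maximal monotone map (Definition \ref{max mon def}), extremality becomes a finite combinatorial constraint on the cyclic order of $\bigcup_i\bigl(\fix(\rho(a_i))\cup\fix(\rho(b_i))\bigr)$, and one shows that its solutions realize exactly the ``linked'' configuration of fixed-point pairs coming from a Fuchsian action on $\partial\H^2 = S^1$.

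With this configuration in hand I would finish in either of two equivalent ways. One option is to verify condition ii) of Theorem \ref{rot coord}, i.e. that $\tau(\rho(\gamma_1),\rho(\gamma_2))$ depends only on $\gamma_1,\gamma_2$ and not on the chosen extremal $\rho$; together with condition i) this gives directly that any two extremal representations are semi-conjugate. The other is to build, out of the combinatorial identification of fixed-point patterns, an explicit degree-one monotone map intertwining $\rho$ with a fixed Fuchsian model $\rho_{\mathrm{Fuch}}$. Either route, combined with the fact that all Fuchsian actions on $S^1$ form a single semi-conjugacy class and that semi-conjugacy is an equivalence relation (Proposition \ref{equiv prop}), shows the connected component of $\rho_0$ is a single semi-conjugacy class --- that is, $\rho_0$ is fully rigid.

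The main obstacle is the middle step: upgrading the single scalar identity $\euler(\rho)=2g-2$ into genuine combinatorial control of the dynamics (the linking pattern of the fixed-point sets), and then proving this control is rigid --- that the finitely many extremal configurations all lie in one semi-conjugacy class. Passing to maximal monotone maps is precisely what makes this tractable, turning an a priori infinite-dimensional optimization over homeomorphisms into a finite problem; but carrying out the enumeration cleanly across the $g$ commutator blocks, and checking that the resulting common reduction is indeed (semi-conjugate to) a Fuchsian action, is where the real work lies.
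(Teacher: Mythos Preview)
Your overall framework is sound, but you have set yourself a harder task than necessary and missed the key shortcut. You reduce to showing that \emph{any two} representations with Euler number $2g-2$ are semi-conjugate --- but this is essentially Matsumoto's Theorem \ref{mats thm} itself, and indeed your ``middle step'' (extracting the full linked fixed-point configuration and then either verifying condition ii) of Theorem \ref{rot coord} or building an explicit semi-conjugacy to a Fuchsian model) is precisely the hard content of that theorem. You correctly identify this as the main obstacle, and you do not actually carry it out.

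The paper's proof sidesteps this entirely. It proves only that the \emph{connected component} of $\rho_0$ is a single semi-conjugacy class --- which is what full rigidity requires --- and explicitly does \emph{not} show that all maximal representations are semi-conjugate (see the Remark following the proof). The device is Lemma \ref{const coord lemma}: it suffices to show that $\rot(\rho(\gamma))$ is constant on the component for \emph{every} $\gamma \in \Gamma_g$, since then $\tau$ is automatically constant as well and Theorem \ref{rot coord} applies. Your argument already gives $\rot(\rho(a_i)) = \rot(\rho(b_i)) = 0$ for the standard generators. For an arbitrary $\gamma$, the paper invokes Scott's theorem \cite{Scott}: pass to a finite-index surface subgroup $\Lambda \subset \Gamma_g$ in which $\gamma$ is represented by a non-separating simple closed curve, hence can be taken as a standard generator of $\Lambda$. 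Since both Euler number and Euler characteristic are multiplicative with respect to index, $\rho|_\Lambda$ is again maximal, and the generator argument yields $\rot(\rho(\gamma)) = 0$. No combinatorial enumeration of fixed-point configurations, and no direct computation of $\tau$, is needed.
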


The proof, though much easier than that of Theorem \ref{invent thm}, is very much in the same spirit.   We begin with an easy lemma used in both cases.  

\begin{lemma}[Work in coordinates.]  \label{const coord lemma}
Let $\Gamma$ be any group, and suppose for each $\gamma \in \Gamma$ the function $\gamma \mapsto \rot(\rho(\gamma))$ is constant on a connected component $C$ of $\Hom(\Gamma_g, \Homeo(S^1))$.  Then $C$ consists of a single semi-conjugacy class. 
\end{lemma}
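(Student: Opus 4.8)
The plan is to reduce this statement to Theorem \ref{rot coord} (``Rotation numbers are coordinates''), which already tells us that two representations $\rho, \rho' \in \Hom(\Gamma, \Homeo(S^1))$ are semi-conjugate if and only if (i) $\rot(\rho(s)) = \rot(\rho'(s))$ for each generator $s$, and (ii) $\tau(\rho(\gamma_1), \rho(\gamma_2)) = \tau(\rho'(\gamma_1), \rho'(\gamma_2))$ for all pairs $\gamma_1, \gamma_2 \in \Gamma$. So it suffices to show that the hypothesis --- $\gamma \mapsto \rot(\rho(\gamma))$ is constant on the connected component $C$, for \emph{every} $\gamma \in \Gamma$, not just for generators --- already forces condition (ii) to hold across all of $C$. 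Then any two representations in $C$ satisfy both (i) and (ii), hence are semi-conjugate, and $C$ is a single semi-conjugacy class.

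First I would recall the definition $\tau(f,g) = \rott(\tilde f \tilde g) - \rott(\tilde f) - \rott(\tilde g)$, which is independent of the choice of lifts. Fix $\gamma_1, \gamma_2 \in \Gamma$ and consider the three functions $\rho \mapsto \rot(\rho(\gamma_1))$, $\rho \mapsto \rot(\rho(\gamma_2))$, and $\rho \mapsto \rot(\rho(\gamma_1 \gamma_2))$ on $C$. By hypothesis each of these three is constant on $C$ (applying the hypothesis to the elements $\gamma_1$, $\gamma_2$, and the product $\gamma_1\gamma_2 \in \Gamma$). The subtlety is that $\tau$ is built from $\rott$ (the $\R$-valued lifted rotation number) rather than $\rot$ (the $\R/\Z$-valued one), so I need a continuous $\R$-valued function on $C$, not just an $\R/\Z$-valued one. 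For this I would pass to the pullback central extension: over $C$ one has a consistent choice, for each $\gamma$, of a lift $\widetilde{\rho(\gamma)} \in \Homeo_\Z(\R)$ varying continuously with $\rho$ (concretely, fix lifts of the generators at one basepoint representation and propagate; the lift of a word is then determined). The function $\rho \mapsto \rott(\widetilde{\rho(\gamma)})$ is then continuous on $C$ by continuity of $\rott$ (Proposition \ref{rot prop}(5)), and it reduces mod $\Z$ to $\rho \mapsto \rot(\rho(\gamma))$, which is constant. A continuous $\R$-valued function that is constant mod $\Z$ on a connected space is itself constant. Hence $\rho \mapsto \rott(\widetilde{\rho(\gamma)})$ is constant on $C$ for every $\gamma$, and in particular for $\gamma_1$, $\gamma_2$, and $\gamma_1\gamma_2$. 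It follows that $\tau(\rho(\gamma_1), \rho(\gamma_2)) = \rott(\widetilde{\rho(\gamma_1)}\,\widetilde{\rho(\gamma_2)}) - \rott(\widetilde{\rho(\gamma_1)}) - \rott(\widetilde{\rho(\gamma_2)})$ is constant on $C$, using that $\widetilde{\rho(\gamma_1)}\,\widetilde{\rho(\gamma_2)}$ is a valid lift of $\rho(\gamma_1\gamma_2)$.

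With condition (ii) verified and condition (i) immediate (generators are in particular elements of $\Gamma$), Theorem \ref{rot coord} gives that any $\rho, \rho' \in C$ are semi-conjugate. Conversely, semi-conjugate representations lie in the same connected component (the remark after Definition \ref{C-D def}), so $C$ is \emph{exactly} one semi-conjugacy class. The main obstacle is the bookkeeping of lifts: making precise that one can choose lifts $\widetilde{\rho(\gamma)}$ continuously over the connected component $C$ so that $\rho \mapsto \rott(\widetilde{\rho(\gamma)})$ is a genuine continuous $\R$-valued function --- but this is exactly the pullback-extension construction already used in the proof of Theorem \ref{rot coord}, so it should go through without difficulty. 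Everything else is a formality: constancy mod $\Z$ plus continuity plus connectedness yields constancy, and then one quotes Theorem \ref{rot coord}.
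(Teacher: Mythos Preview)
Your proposal is correct and follows essentially the same approach as the paper: verify conditions (i) and (ii) of Theorem \ref{rot coord} by using the hypothesis that $\rot(\rho(\gamma))$ is constant for \emph{every} $\gamma$ (in particular for $\gamma_1$, $\gamma_2$, and $\gamma_1\gamma_2$), and conclude via that theorem. The paper's proof compresses your lifting argument into a single sentence (``it follows from the definition of $\tau$ that $\tau(\rho(\gamma),\rho(\gamma'))$ is constant''); your explicit passage through continuous lifts and the observation that a continuous $\R$-valued function constant mod $\Z$ on a connected set is constant is exactly the justification that sentence is implicitly invoking.
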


\begin{proof}
Let $\tau$ be as defined in Theorem \ref{rot coord}, and fix $\gamma$ and $\gamma'$ in $\Gamma$.  
Since $\rot(\rho(\gamma))$ and $\rot(\rho(\gamma'))$ are constant on $C$, it follows from the definition of $\tau$ that $\tau \left( \rho(\gamma), \rho(\gamma') \right)$ is constant also.   By hypothesis, the rotation numbers of generators in $\Gamma$ are also constant on $C$, so by Theorem \ref{rot coord}, $C$ is a single semi-conjugacy class. 
\end{proof}

Now we prove Proposition \ref{mat rig}. 
\begin{proof}
Let $\gamma \in \Gamma_g$.  If $\gamma$ represents a non-separating simple closed curve on $\Sigma_g$, then one can build a standard set of generators for $\Gamma_g$ with $a_1 = \gamma$.    Assume that $\rho_0$ has maximal Euler number, and 
let $\rho$ be in the connected component containing $\rho$, so $\rho$ also has maximal rotation number.  
As we mentioned in Remark \ref{equality rk}, Theorem \ref{CW thm} implies that $\rott \left( [\rho(a_1), \rho(b_1)]  \right)= 1$, which by Lemma \ref{comm lemma} implies that $\rot(\rho(a_1)) =0 = \rot(\rho_0(a_1))$.  

If $\gamma$ does not represent a non-separating simple closed curve, we may use Scott's geometric subgroup theorem from \cite{Scott} to find a finite index surface subgroup $\Lambda \subset \Gamma$ containing $\gamma$, and in which $\gamma$ represents a simple closed curve.  As Euler number and Euler characteristic are both multiplicative with respect to index, the restriction of $\rho$ to $\Lambda$ also has maximal Euler number.  Thus, we may include $\gamma$ in a standard generating set for $\Lambda$, and apply our argument above to conclude $\rot(\rho(\gamma)) = 0 = \rot(\rho_0(\gamma))$.  
\end{proof}

\begin{remark}
The proof of Proposition \ref{mat rig} consisted of showing that if $\rho$ is maximal, then $\rot(\rho(\gamma)) = 0$ for all $\gamma \in \Gamma_g$.    By virtue of Lemma \ref{const coord lemma}, this implied that the connected component of a maximal representation was a single semi-conjugacy class. 
However, the proof does \emph{not} show that all representations with Euler number $2g-2$ are semi-conjugate.   For example, the trivial representation satisfies $\rot(\rho(\gamma)) = 0$ for all $\gamma \in \Gamma_g$, but is not semi-conjugate to one with Euler number $2g-2$.   In order to get the semi-conjugacy result of Theorem \ref{mats thm}, one needs to also understand the value of $\tau(\rho(\gamma), \rho(\gamma'))$ for all pairs $\gamma$ and $\gamma'$.   This requires some careful thought; even for maximal representations the value depends on the choice of $\gamma$ and $\gamma'$.    Describing $\tau(\rho(\gamma), \rho(\gamma'))$ in terms of $\gamma$ and $\gamma'$ (topologically) is the main content of Matsumoto's proof in \cite{Matsumoto}.  
\end{remark}

%----------------------------------
\subsection{Modifications for the general case} 
Proposition \ref{mat rig} gives a special case of full rigidity-- the case of geometric representations of surface groups in to $\PSL(2,\R)$.  For the general case, we first reduce the problem to that of surface groups, then follow a similar strategy to the proof of Proposition \ref{mat rig}.    

\boldhead{I. Reduction to surface groups.}
Recall that each geometric subgroup of $\Homeo(S^1)$ is either finite cyclic or contains a finite index surface group.  If $\Gamma$ is finite cyclic, a faithful representation is determined by the rotation number of a generator, which must be of the form $k/|\Gamma|$, and hence is constant on connected components.   

The remaining case is reduced to rigidity of surface groups by the following Proposition. 

\begin{proposition}
Let $\Gamma$ be any group, $\Lambda \subset \Gamma$ a finite index subgroup, and $\rho_0: \Gamma \to \Homeo(S^1)$ a representation.  If the restriction of $\rho_0$ to $\Lambda$ is fully rigid in $\Hom(\Lambda, \Homeo(S^1))$, then $\rho_0$ is fully rigid in $\Hom(\Gamma, \Homeo(S^1))$.
\end{proposition}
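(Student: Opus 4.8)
The plan is to translate everything into rotation-number coordinates and combine the rigidity of $\rho_0|_\Lambda$ with connectedness via Lemma \ref{const coord lemma}. Write $C$ for the connected component of $\rho_0$ in $\Hom(\Gamma, \Homeo(S^1))$. By Lemma \ref{const coord lemma} it is enough to prove that for every $\gamma \in \Gamma$ the function $\rho \mapsto \rot(\rho(\gamma))$ is constant on $C$; that lemma then gives immediately that $C$ is a single semi-conjugacy class, i.e. that $\rho_0$ is fully rigid. So the whole proof reduces to establishing this one "constant rotation number" statement.

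First I would observe that restriction, $\rho \mapsto \rho|_\Lambda$, is continuous from $\Hom(\Gamma, \Homeo(S^1))$ to $\Hom(\Lambda, \Homeo(S^1))$ for the topologies of pointwise convergence, so it carries the connected set $C$ into the connected component of $\rho_0|_\Lambda$. By hypothesis that component is a single semi-conjugacy class, hence $\rho|_\Lambda$ is semi-conjugate to $\rho_0|_\Lambda$ for every $\rho \in C$. Since rotation number is invariant under semi-conjugacy (the forward direction of Theorem \ref{rot coord}, applied with generating sets of $\Lambda$ containing a prescribed element), this yields $\rot(\rho(\lambda)) = \rot(\rho_0(\lambda))$ for all $\lambda \in \Lambda$ and all $\rho \in C$. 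Thus the desired statement already holds for $\gamma \in \Lambda$.

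The remaining, genuinely new, step is to promote this from $\Lambda$ to all of $\Gamma$. Given arbitrary $\gamma \in \Gamma$, finiteness of $[\Gamma : \Lambda]$ forces two of the cosets $\Lambda, \gamma\Lambda, \gamma^2\Lambda, \dots$ to coincide, so $\gamma^m \in \Lambda$ for some $m \geq 1$. By homogeneity of the rotation number (Proposition \ref{rot prop}(2)), $\rot(\rho(\gamma))$ is one of the (at most $m$) solutions $x \in \R/\Z$ of $m x = \rot(\rho(\gamma^m))$, and the right-hand side is independent of $\rho \in C$ by the previous paragraph. Hence $\rho \mapsto \rot(\rho(\gamma))$ is a continuous function on the connected set $C$ with image contained in a fixed finite subset of $\R/\Z$, and is therefore constant. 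Applying Lemma \ref{const coord lemma} finishes the proof. The only routine points to verify are the two continuity assertions (of restriction, and of $\rho \mapsto \rot(\rho(\gamma))$) and the elementary topological fact that a continuous map from a connected space to $\R/\Z$ with finite image is constant; I expect no real obstacle here — the one thing to be careful about is that the image of $C$ genuinely lands in the connected component of $\rho_0|_\Lambda$, and not merely in $\Hom(\Lambda,\Homeo(S^1))$.
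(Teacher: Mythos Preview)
Your proof is correct and follows essentially the same route as the paper: reduce via Lemma \ref{const coord lemma} to constancy of $\rot(\rho(\gamma))$, use continuity of restriction to land in the component of $\rho_0|_\Lambda$, and then for arbitrary $\gamma$ pass to a power $\gamma^m \in \Lambda$ and invoke homogeneity. If anything you are slightly more careful than the paper at the final step, making explicit the finite-image-plus-continuity argument that the paper leaves implicit in the line ``which implies that $\rot(\rho(\gamma))$ is constant.''
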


\begin{proof}
Suppose that $\rho_0 |_\Lambda$ is fully rigid. 
As in Lemma \ref{const coord lemma}, it suffices to show that, for each $\gamma \in \Gamma$, the function $\gamma \mapsto \rot(\rho(\gamma))$ is constant on the connected component of $\rho_0$ in $\Hom(\Gamma, \Homeo(S^1))$.  Given $\gamma \in \Gamma$,  there exists $n$ such that $\gamma ^n \in \Lambda$.   If $\rho$ is in the same connected component as $\rho_0$, then $\rho|\Lambda$ is in the same connected component of $\Hom(\Lambda, \Homeo(S^1))$ as $\rho_0|\Lambda$, so 
$$ n \rot(\rho(\gamma)) =  \rot(\rho(\gamma^n)) =  \rot(\rho_0(\gamma^n)) = n  \rot(\rho_0(\gamma))$$
which implies that $\rot(\rho(\gamma))$ is constant on connected components.  
\end{proof}

\boldhead{II. Proof outline for surface groups}(technical work omitted).
Suppose that $\rho_0: \Gamma_g \to \PSLk$ is a geometric representation.  
Since the restriction of $\rho$ to a finite index subgroup is also geometric, it suffices (as in the proof of Proposition \ref{mat rig}) to prove that $\rot(\rho(a_1))$ is constant on the component of $\rho_0$ in $\Hom(\Gamma, \Homeo(S^1))$, where $a_1$ is any non-separating simple closed curve.  
However, unlike in Proposition \ref{mat rig}, here $\rho_0$ does not have maximal Euler number.  We will show instead that it does have maximal Euler number \emph{given} some constraints on the cyclic order of periodic points for certain elements of $\rho(\Gamma_g)$.    Rather than using the computation of $R_w(s_1,..., s_n)$ as we did in Proposition \ref{mat rig}, here we'll use the more refined output of Algorithm \ref{CW alg}, which is sensitive to the input order of periodic points.   

We begin by describing the ordering of periodic points for some elements of $\rho_0(\Gamma_g)$.   From Example \ref{pslk ex}, we know that $\rho_0$ is obtained by lifting a subgroup of $\PSL(2,\R)$ surface group action to a $k$-fold cover of $S^1$, and has Euler number $\pm (2g-2)/k$.  For concreteness, we'll work with the $(2g-2)/k$ case.

Figure \ref{fp fig} is a picture of the arrangement of fixed or periodic points of the generators $a_1$ and $b_1$, and the conjugate $b_1a_1^{-1} b_1^{-1}$ for the standard $\PSL(2,\R)$ action with Euler number $2g-2$.  Lifting this picture to a $k$-fold cover produces $2k$ fixed (or periodic) points each for $\rho_0(a_1)$ and $\rho_0(b_1a_1^{-1} b_1^{-1})$, which alternate two-by-two around the circle. 
After lifting to the line and using these configurations and rotation numbers as input data, the Calegari-Walker algorithm gives the bound 
$$\rott[\rho_0(a_1), \rho_0(b_1)] \leq 1/k,$$ 
and in this case the maximum is attained.  
The same argument can be applied to each pair of generators, giving $\rott[\rho_0(a_i), \rho_0(b_i)] = 1/k$.  

Let $c_i(\rho) = [\rho_0(a_i), \rho_0(b_i)]$, we will abuse notation and use this to refer both to the commutator in $\Homeo(S^1)$ and its canonical lift to $\Homeo_\Z(\R)$.  
Again, since we know the standard action of a surface group in $\PSL(2,\R)$, it is easy to understand the arrangement of lifted periodic points of the $c_i(\rho)$ on $\R$.  Each $c_i(\rho)$ has two $1/k$--periodic orbits, and we can choose periodic points $y_i$ and $z_i$ for $c_i(\rho)$ -- one in each orbit -- with the ordering 
\begin{equation} \label{ci order}
 y_1 < z_1 < y_2 < z_2 < ... < y_g < z_g < c_1(y_1) < c_1(z_1) < c_2(y_2) < ... 
\end{equation}
With this input data, the Calegari-Walker algorithm gives 
$$\rott(c_1 c_2.... c_{g-1}) \leq (2g-3)/k$$ 
and again equality is attained.  That equality holds can be computed directly, or argued from the fact that $\rott(c_1 c_2.... c_g) = \euler(\rho_0) = (2g-2)/k$.

   \begin{figure}
      \labellist 
  \footnotesize \hair 2pt
   \pinlabel $a_1$ at 5 42
     \pinlabel $b_1$ at -5 85 
     \pinlabel {$b_1 a_1 b_1^{-1}$} at -10 123
   \endlabellist
 \centerline{
    \mbox{\includegraphics[width=1.7in]{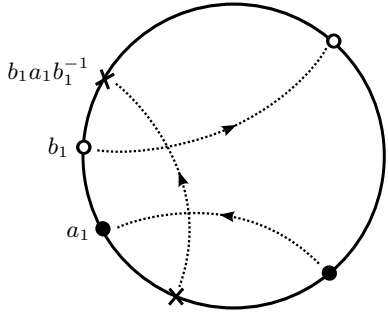}}
}
\caption{Dynamics of standard surface group elements in $\PSL(2,\R)$}
 \label{fp fig}
      \end{figure}

The upshot of our discussion is that, for any representation $\rho$ such that $\rho(a_i)$, $\rho(b_i)$ and $\rho(b_ia_i^{-1} b_i^{-1})$ have the same combinatorial configuration of fixed or periodic points as that of $\rho_0$ (call this a ``good generator configuration"), we have 
$\rott[\rho(a_i), \rho(b_i)] \leq 1/k$.    With some technical work and careful bookkeeping, involving running the Calegari--Walker algorithm on perturbations of such a representation $\rho$, one then shows that equality implies \emph{stability}: if $\rho$ has a good generator configuration, and $\rott[\rho(a_i), \rho(b_i)] = 1/k$, then there is a neighborhood of $\rho$ in $\Hom(\Gamma_g, \Homeo(S^1))$ consisting of representations that also have a good generator configuration.  
This leads to the following lemma, which is the heart of the proof.

\begin{lemma} 
Let $N \subset \Hom(\Gamma_g, \Homeo(S^1))$ be the set of representations $\rho$ of Euler number $(2g-2)/k$ such that 
\begin{enumerate}[i)]
\item For each $i$, there exist fixed or periodic points for $\rho(a_i)$, $\rho(b_i)$ and $\rho(b_i a_i^{-1} b_i^{-1})$ with the same cyclic order as in Figure \ref{fp fig}.  
\item $\rott(c_i(\rho)) = 1/k$ holds for each $i$, and  
\item there exist periodic orbits for the $c_i(\rho)$ with cyclic order as given in Equation \eqref{ci order}.  
\end{enumerate}
Then $N$ is open and closed, hence a union of connected components of $\Hom(\Gamma_g, \Homeo(S^1))$.
\end{lemma}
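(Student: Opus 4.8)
The plan is to work inside $E := \{\rho \in \Hom(\Gamma_g,\Homeo(S^1)) : \euler(\rho) = (2g-2)/k\}$. This set is clopen, since $\euler(\rho) = \rott\big(\prod_i [\rho(a_i),\rho(b_i)]\big)$ is continuous (Proposition \ref{rot prop}) and integer-valued (the product of the canonical lifts of the commutators lifts $\id \in \Homeo(S^1)$, hence is an integer translation), so it suffices to show $N$ is open and closed in $E$. The first, purely computational, reduction is to check that on $E$ condition (i) already forces condition (ii). Indeed, if $\rho$ has a good generator configuration then the Calegari--Walker estimate recorded before the lemma gives $\rott(c_i(\rho)) \le 1/k$ for every $i$; Theorem \ref{CW thm} together with monotonicity (Proposition \ref{rat s prop}) then yields inductively $\rott\big(\prod_{i=1}^{g-1} c_i(\rho)\big) \le (2g-3)/k$. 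Since $\prod_{i=1}^{g} c_i(\rho)$ is the translation $T^{(2g-2)/k}$, Lemma \ref{additive lem} forces $\rott(c_g(\rho)) = 1/k$ and $\rott\big(\prod_{i=1}^{g-1} c_i(\rho)\big) = (2g-3)/k$; a cyclic permutation of the product (which, being a conjugation, preserves $\rott$, and which still equals $T^{(2g-2)/k}$ because $T^m$ is central) puts any given $c_i$ in the last slot and gives $\rott(c_i(\rho)) = 1/k$ for all $i$. This is the $k$-fold analogue of Remark \ref{equality rk}, and it shows $N = \{\rho \in E : \rho \text{ has a good generator configuration and (iii) holds}\}$.

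\textbf{Openness.} Let $\rho \in N$. Since $\rho$ has a good generator configuration and satisfies $\rott(c_i(\rho)) = 1/k$, the stability statement established just before the lemma provides a neighborhood $U$ of $\rho$ in which every representation again has a good generator configuration; shrinking $U$ into $E$ (possible as $E$ is open), every $\rho' \in U$ then also satisfies (ii) by the previous paragraph. To obtain (iii) on a possibly smaller neighborhood, I would use that for a representation with a good generator configuration the equality $\rott(c_i(\rho')) = 1/k$ pins the two $1/k$--periodic orbits of $c_i(\rho')$ to the locations they occupy in the $\PSL(2,\R)$ model --- this is exactly the bookkeeping with maximal monotone maps that produced the ordering \eqref{ci order} to begin with --- so the cyclic order of \eqref{ci order} persists throughout $U$. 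Hence $U \subseteq N$, and $N$ is open.

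\textbf{Closedness.} Suppose $\rho_n \to \rho$ with $\rho_n \in N$. Continuity of $\euler$ gives $\rho \in E$, and continuity of $\rott$ gives $\rott(c_i(\rho)) = 1/k$ for every $i$. It remains to verify that $\rho$ has a good generator configuration and satisfies (iii). For the configuration: the fixed or periodic points of $\rho(a_i)$, $\rho(b_i)$, $\rho(b_i a_i^{-1} b_i^{-1})$ are limits of the corresponding points for $\rho_n$, so the only way the cyclic order can fail to be the good one is by a collision of some of these points. But feeding any such degenerate configuration into the Calegari--Walker algorithm --- with the rotation numbers of the generators held fixed, as these are locally constant on $N$ --- yields a strictly smaller bound $\rott(c_i(\rho)) < 1/k$, contradicting the equality just obtained. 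Hence $\rho$ has a good generator configuration, (ii) holds by the first paragraph, and (iii) follows for $\rho$ by the same pinning argument as in the openness step. Thus $\rho \in N$, so $N$ is closed, and therefore $N$ is a union of connected components of $\Hom(\Gamma_g,\Homeo(S^1))$.

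\textbf{Expected obstacle.} The crux is the two rigidity assertions about the Calegari--Walker maximum: that equality $\rott(c_i) = 1/k$ together with a good generator configuration forces the $1/k$--periodic orbits of the commutators into the order \eqref{ci order}, and dually that any genuinely degenerate generator configuration strictly lowers the Calegari--Walker bound for $\rott(c_i)$. These are exactly the "technical work and careful bookkeeping" with maximal monotone maps alluded to before the lemma; once they are in hand, the rest is a formal consequence of continuity of $\rott$ and $\euler$, Theorem \ref{CW thm}, and Lemma \ref{additive lem}.
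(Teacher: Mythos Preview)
Your overall architecture matches the paper's sketch (it does not give a full proof either, only the stability assertion and the lemma), but there is a genuine error in your reduction step.

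You claim that on $E$, condition (i) alone forces (ii), and for this you invoke Theorem~\ref{CW thm} to get the inductive bound $\rott\big(\prod_{i=1}^{g-1} c_i(\rho)\big)\le (2g-3)/k$ from $\rott(c_i(\rho))\le 1/k$. That inductive step is false once $k\ge 4$. Already at the first stage, Theorem~\ref{CW thm} gives
\[
R_{fg}(1/k,1/k)=\sup_{p_1/q\le 1/k,\ p_2/q\le 1/k}\frac{p_1+p_2+1}{q},
\]
and taking $q=1$, $p_1=p_2=0$ shows $R_{fg}(1/k,1/k)\ge 1>3/k$ for $k\ge 4$. So the chain of inequalities never gets off the ground, and your equality-forcing argument via Lemma~\ref{additive lem} and cyclic permutation collapses. (It is instructive that your argument would, if it worked, be exactly the Milnor--Wood computation scaled by $1/k$; but the formula in Theorem~\ref{CW thm} is highly nonlinear in this sense.)

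The paper does \emph{not} obtain the bound $(2g-3)/k$ from the rotation numbers $\rott(c_i)\le 1/k$ alone. It obtains it by feeding the specific ordering of periodic orbits in \eqref{ci order} --- i.e.\ condition (iii) --- into Algorithm~\ref{CW alg}. In other words, the correct reduction is that (i) together with (iii), on $E$, forces (ii); condition (iii) is doing real work and cannot be relegated to an afterthought. This propagates through the rest of your argument: both your openness and closedness steps must now carry (iii) along as a primary datum, not deduce it at the end from a ``pinning'' argument that in turn depended on (ii). Once you reorganize in this way, the remaining strategy --- stability for openness, continuity of $\rott$ plus a strict Calegari--Walker drop under degeneration for closedness --- is exactly what the paper outlines, with the technical content deferred to \cite{Invent}.
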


Since $\rho_0 \in N$, the connected component containing $\rho_0$ is a subset of $N$.  Now we may easily finish the proof by showing that $\rot(\rho(a_1))$ is constant on connected components of $N$.  Since $c_1(\rho) = [\rho(a_1), \rho(b_1)]$ has rotation number $k$, Lemma \ref{comm lemma} implies that $\rot(\rho(a_1))$ is rational with denominator at most $k$ on $N$.  As $\rot$ is continuous, $\rot(a_1)$ is constant on connected components of $N$.  
\qed

\begin{remark}[Matsumoto's alternative proof]
In a very recent paper \cite{Matsumoto mann}, Matsumoto gives a different ``coordinate-free" proof of Theorem \ref{invent thm}.   Much like how the proof we outlined here hinged on reducing rigidity to tracking some finite amount of data (i.e. configurations of fixed or periodic points for $\rho(a_i)$, $\rho(b_i)$, $\rho(b_i a_i^{-1} b_i^{-1})$ and $c_i(\rho)$), Matsumoto encodes group actions with a finite ``basic partition", similar in spirit to the Markov maps used by Bowen and Series \cite{Bowen}, \cite{BS}.    See \cite{Matsumoto mann} for details. 
\end{remark}

%----------------------------------

\subsection{Consequences of full rigidity}

As a first consequence of Theorem \ref{invent thm}, we show that the Euler number does not distinguish connected components of $\Hom(\Gamma_g, \Diff^r(S^1))$.

\begin{theorem}[Distinguishing connected components]  \label{invent comp cor}
Let $r \geq 0$, and let $k$ divide $2g-2$.  There are at least $k^{2g}$ connected components of $\Hom(\Gamma_g, \Diff^r(S^1))$ consisting of representations with Euler number $(2g-2)/k$.  
\end{theorem}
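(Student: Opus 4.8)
The plan is to write down $k^{2g}$ explicit, pairwise non-semi-conjugate \emph{geometric} representations of Euler number $(2g-2)/k$, and then let Theorem \ref{invent thm} do the work: full rigidity forces their connected components to be distinct. All the analytic content sits in Theorem \ref{invent thm}; the rest is construction and bookkeeping.

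First I would build the representations via Example \ref{pslk ex}. Fix a discrete, faithful $\rho\colon \Gamma_g \to \PSL(2,\R)$ of Euler number $2g-2$ (both signs occur, or conjugate by an orientation-reversing homeomorphism). Since $k \mid 2g-2$, every choice of lifts of the generators $a_i,b_i$ to $\PSLk$ satisfies the surface relation, and each commutator $[\widetilde{\rho}(a_i),\widetilde{\rho}(b_i)]$ is independent of the chosen lifts because lifts differ by the central $\Z/k\Z$. This gives $k^{2g}$ representations $\widetilde{\rho}_{\mathbf j}$, indexed by $\mathbf j=(j_1,l_1,\dots,j_g,l_g)\in(\Z/k\Z)^{2g}$, where $\widetilde{\rho}_{\mathbf j}(a_i)$ is the lift of $\rho(a_i)$ of rotation number $j_i/k$ and $\widetilde{\rho}_{\mathbf j}(b_i)$ the lift of rotation number $l_i/k$ — here one uses that a nontrivial element of a cocompact Fuchsian group is hyperbolic, hence has a fixed point and rotation number $0$, so its $k$ lifts to the $k$-fold cover $S^1\to S^1$ acquire exactly the $k$ distinct rotation numbers $0,1/k,\dots,(k-1)/k$. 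Each $\widetilde{\rho}_{\mathbf j}$ is geometric in the sense of Definition \ref{geom def}: it is faithful (it covers the faithful $\rho$), and its image is a cocompact lattice in the transitive connected group $\PSLk\subset\Homeo(S^1)$ — it is an index-$k$ subgroup of the (cocompact, discrete) preimage of $\rho(\Gamma_g)$ in $\PSLk$ — which is precisely the situation described by the theorem on geometric actions on $S^1$ in Section \ref{ubiq sec}. Moreover $\widetilde{\rho}_{\mathbf j}$ takes values in $\PSLk\subset\Diff^\omega(S^1)\subset\Diff^r(S^1)$, since Möbius transformations and their lifts through the real-analytic covering $z\mapsto z^k$ are real-analytic.

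Next I would read off the two relevant invariants. The Euler number of $\widetilde{\rho}_{\mathbf j}$ is $\rott$ of the canonical lift of $\prod_i[\widetilde{\rho}_{\mathbf j}(a_i),\widetilde{\rho}_{\mathbf j}(b_i)]$; since this product of commutators does not depend on $\mathbf j$ and the flat bundle it defines is the $k$-fold fiberwise cover of the one from $\rho$ (as in the remark following Example \ref{pslk ex}), its Euler number is $(2g-2)/k$ for every $\mathbf j$. On the other hand, the rotation numbers of the generators recover the index: $\rot(\widetilde{\rho}_{\mathbf j}(a_i))=j_i/k$ and $\rot(\widetilde{\rho}_{\mathbf j}(b_i))=l_i/k$. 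Because rotation number of each group element is invariant under semi-conjugacy (part of Theorem \ref{rot coord}, and also immediate from boundedness of monotone maps as in Section \ref{SC subsec}), $\mathbf j\neq\mathbf j'$ implies $\widetilde{\rho}_{\mathbf j}$ and $\widetilde{\rho}_{\mathbf j'}$ are not semi-conjugate. Now apply Theorem \ref{invent thm}: each geometric $\widetilde{\rho}_{\mathbf j}$ is fully rigid, so its component in $\Hom(\Gamma_g,\Homeo(S^1))$ is a single semi-conjugacy class, and hence its component in the subspace $\Hom(\Gamma_g,\Diff^r(S^1))$ is contained in one semi-conjugacy class. Therefore the $\widetilde{\rho}_{\mathbf j}$ lie in $k^{2g}$ distinct connected components of $\Hom(\Gamma_g,\Diff^r(S^1))$; since the Euler number is a continuous integer-valued, hence locally constant, function on this space, each of these components consists of representations of Euler number $(2g-2)/k$. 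The step most worth handling carefully is the verification that the $k$ lifts of a hyperbolic element realize all $k$ values $j/k$ and that the Euler number is unaffected by the choice of lifts; everything else is formal once Theorem \ref{invent thm} is in hand.
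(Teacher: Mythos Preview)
Your proposal is correct and follows essentially the same route as the paper: construct the $k^{2g}$ lifts of a fixed Fuchsian representation to $\PSLk$ via Example~\ref{pslk ex}, observe that they are geometric with Euler number $(2g-2)/k$, distinguish them up to semi-conjugacy by the rotation numbers of the generators, and invoke Theorem~\ref{invent thm} to conclude that their connected components are distinct. You have filled in several details the paper leaves implicit---that the lifts land in $\Diff^r(S^1)$, that the image is genuinely a cocompact lattice in $\PSLk$, and that connectedness in $\Hom(\Gamma_g,\Diff^r(S^1))$ implies connectedness in $\Hom(\Gamma_g,\Homeo(S^1))$---but the architecture is the same.
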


Since $k=2$ always divides $2g-2$, this gives a proof of Theorem \ref{comp thm}, which we stated in Section \ref{char var subsec}.  

\begin{proof}  
By Theorem \ref{invent thm}, it suffices to show that there are $k^{2g}$ distinct semi-conjugacy classes of geometric representations of $\Gamma_g$ with Euler number $(2g-2)/k$.   
In fact, there are exactly $k^{2g}$ geometric representations, and no two are semi-conjugate.  To see this, we return to Example \ref{pslk ex}.  Each geometric representation with Euler number $(2g-2)/k$ corresponds to a \emph{lift} $\rho'$ of a geometric representation $\rho$ to $\PSL(2,\R)$ as in the diagram below

\begin{displaymath}
    \xymatrix @M=4pt {
  0 \ar[r] & \Z/k\Z \ar[r] &   \PSLk  \ar[r] & \PSL(2,\R)  \ar[r] & 1 \\
  & &  &  \Gamma_g \ar[u]_{\textstyle \rho} \ar[ul]^{ \textstyle \rho'}  &
        }
\end{displaymath}

Since $\Hom(\Gamma_g, \Z/k\Z)$ has cardinality $k^{2g}$, there are $k^{2g}$ different lifts, corresponding to the $k$ choices of lifts for each standard generator.  As each lift of a generator has different rotation number, no two of the $k^{2g}$ choices of lifts are semi-conjugate.  
\end{proof}

The second consequence is a straightforward application of Ghys' differentiable rigidity techniques in \cite{Ghys IHES}, giving a generalization of Theorem \ref{bowden thm}.   

\begin{theorem}(Differentiable rigidity)
Let $r \geq 3$, and let $\rho_0: \Gamma \to \PSLk$ be geometric.  Then the connected component of $\rho_0$ in $\Hom(\Gamma, \Diff^r(S^1))$ consists of a finite dimensional family of $\Diff^r(S^1)$--conjugacy classes. 
\end{theorem}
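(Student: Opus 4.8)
The plan is to combine the full rigidity of $\rho_0$ given by Theorem \ref{invent thm} with the differentiable rigidity machinery of Ghys \cite{Ghys IHES} and Bowden \cite{Bowden}. Write $C$ for the connected component of $\rho_0$ in $\Hom(\Gamma, \Diff^r(S^1))$. Since the inclusion $\Hom(\Gamma, \Diff^r(S^1)) \hookrightarrow \Hom(\Gamma, \Homeo(S^1))$ is continuous and $\rho_0$ is fully rigid, every $\rho \in C$ is semi-conjugate to $\rho_0$; in particular, after restricting to a finite-index surface subgroup and using multiplicativity of the Euler number and Euler characteristic (as in the proof of Proposition \ref{mat rig}), each $\rho \in C$ has the same Euler number as $\rho_0$. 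The case of finite cyclic $\Gamma$ is immediate, and the general finite-extension case reduces to the surface group case by restricting to a finite-index surface subgroup --- a $\Diff^r$ conjugacy class of $\rho$ being controlled by that of $\rho|_{\Gamma_g}$ together with finitely many extension choices, since the normalizer in $\Diff^r_+(S^1)$ of the image of a geometric surface subgroup is itself contained in a Lie group. So it suffices to treat $\Gamma = \Gamma_g$.

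The core of the argument is then to upgrade the semi-conjugacy $\rho \sim \rho_0$ to a $C^r$ conjugacy with a geometric representation. When $k = 1$, $\rho$ has Euler number $\pm\chi(\Sigma_g)$ and Theorem \ref{ghys glob rig} applies directly: $\rho$ is $C^r$ conjugate to a discrete faithful representation into $\PSL(2,\R)$. For general $k$ dividing $2g-2$ the Euler number is $\pm(2g-2)/k$ and Ghys' theorem does not apply verbatim; instead one feeds the semi-conjugacy into Bowden's argument for Theorem \ref{bowden thm}. Concretely, the $k$-fold fiberwise cover of the unit tangent bundle of $\Sigma_g$ carries an Anosov flow whose weak-unstable foliation is transverse to the fibers with geometric holonomy, and knowing a priori from Theorem \ref{invent thm} that $\rho$ is semi-conjugate to this model lets one run the structural-stability/reparametrization argument (cf. Corollary \ref{anosov cor}) to produce a $C^0$ conjugacy between $\rho$ and a representation $\rho'$ with image in $\PSLk$; Ghys' construction of a $\rho'$-invariant projective structure on the covering circle then promotes this to a $C^r$ conjugacy. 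Thus every $\rho \in C$ is $C^r$ conjugate to a geometric $\rho'$ into $\PSLk$ whose projection to $\PSL(2,\R)$ has maximal Euler number $\pm\chi$, hence is discrete and faithful by Theorem \ref{goldman rig thm}, and whose lift is pinned down by the rotation numbers of the standard generators, which are constant on $C$.

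Finally one checks that the $C^r$ conjugacy classes arising this way form a finite-dimensional family. The geometric representations into $\PSLk$ that are semi-conjugate to $\rho_0$ are parametrized by the Teichm\"uller component $\mathcal{T} \subset \Hom(\Gamma_g, \PSL(2,\R))$ --- one admissible lift per point --- a finite-dimensional space, and two of them are $C^r$ conjugate exactly when their $\PSL(2,\R)$-projections are: a conjugating $C^r$ diffeomorphism can be taken to commute with the $\mathbb{Z}/k\mathbb{Z}$ deck action, hence descends to a $C^r$ diffeomorphism of the base circle, which has nowhere-vanishing derivative and so, by Mostow rigidity on the circle (Theorem \ref{mostow circle}), lies in $\PSL(2,\R)$. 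Hence the set of $\Diff^r(S^1)$ conjugacy classes inside $C$ is a quotient of $\mathcal{T}$ --- in fact Teichm\"uller space $\cong \mathbb{R}^{6g-6}$ --- and this entire family is realized in $C$ because $\mathcal{T}$ is connected and its lift into $\Hom(\Gamma_g, \PSLk) \subset \Hom(\Gamma_g, \Diff^r(S^1))$ is connected and contains $\rho_0$. The main obstacle is the middle paragraph: checking that Bowden's and Ghys' differentiable-rigidity techniques, originally deployed along a path of representations, still go through given only the semi-conjugacy furnished by Theorem \ref{invent thm} --- equivalently, that no exotic $C^r$ (with $r \ge 3$) ``Denjoy-ization'' of the geometric model can lie in $C$, and that the invariant projective structure can be built for every representation in the component rather than merely for small perturbations of $\rho_0$.
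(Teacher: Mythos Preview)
Your overall architecture matches the paper's: invoke Theorem \ref{invent thm} to get that every $\rho$ in the component is semi-conjugate to $\rho_0$, upgrade this to a $C^r$ conjugacy into $\PSLk$ via Ghys \cite{Ghys IHES}, then read off finite-dimensionality from the Teichm\"uller component. Your final paragraph on finite-dimensionality is in fact more careful than the paper's sketch (which simply cites the dimension of the relevant component of $\Hom(\Gamma_g,\PSLk)$).

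The gap you flag in your last sentence is real, and your proposed mechanism for filling it does not work. The structural-stability/Anosov route (Corollary \ref{anosov cor}, Bowden's Theorem \ref{bowden thm}) needs a $C^1$-small perturbation or a continuous path from $\rho_0$; a bare semi-conjugacy gives you neither, so nothing in that toolkit rules out a $C^r$ ``Denjoy-ized'' representation sitting in $C$ far from $\rho_0$. The paper resolves this by a different, foliation-theoretic argument: it cites Ghys \cite{Ghys minimal}, which uses Duminy's theorem on ends of leaves of a foliation with an exceptional minimal set to show that the foliated bundle associated to a $C^r$ representation (with $r\geq 3$ here, though $r\geq 2$ suffices for Duminy) cannot have an exceptional minimal set. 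Hence $\rho$ is minimal, and a semi-conjugacy between minimal actions is automatically a genuine $C^0$ conjugacy. Only then does one feed this conjugacy into Ghys' invariant-projective-structure machinery from \cite{Ghys IHES} to promote it to $C^r$. So the missing ingredient is Duminy's theorem via \cite{Ghys minimal}, not hyperbolic dynamics; once you swap that in for your middle paragraph, your sketch coincides with the paper's.
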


\begin{proof}[Proof sketch]
By Theorem \ref{invent thm}, any representation $\rho$ in the connected component of $\rho_0$ is semi-conjugate to $\rho_0$.  The arguments from the proof of Theorem 1 in \cite{Ghys minimal} 
imply that $\rho$ and $\rho_0$ are actually conjugate -- this kind of argument is now a fairly standard technique: one considers the foliated bundle corresponding to $\rho$, and appeals to Duminy's theorem on ends of leaves of a foliation with exceptional minimal set to show that $\rho$ must be minimal, hence conjugate to $\rho_0$.  

As described in Section \ref{hyp sec}, the arguments from \cite{Ghys IHES} now imply that there exists a $C^r$ conjugacy of $\rho$ into $\PSLk$.  As the space of homomorphisms of $\Gamma_g$ into $\PSLk$ with Euler number $(2g-2)/k$ is finite dimensional (each connected component is isomorphic to the Teichmuller component of $\Hom(\Gamma_g, \PSL(2,\R))/\PSL(2,\R)$), this proves the theorem.  
\end{proof}

%---------------------
% Consequences:  The shape of homeo is spikey -- hyperplane cuts it into many connected components.  

%------------------------------------------
\section{Flexibility of group actions}  \label{flex sec}

Having covered rigidity in great detail, we now say a few words about its counterpart.   Just how rigid are non-rigid actions?    
Are there more examples -- analogous to Ghys' differentiable rigidity Theorem \ref{ghys glob rig} -- where the quotient of $\Hom(\Gamma_g, \Homeo(S^1))$ by semi-conjugacy is locally finite dimensional?   Are (as we conjectured) geometric actions the only fully rigid ones?  

In \cite{MW}, progress is made on this conjecture by studying two general families of deformations (``bending" and ``twisting"), both most easily described in terms of the topology of $\Sigma_g$.  

\begin{example}[Bending in separating curves] \label{bending}
Let $\rho: \Gamma_g \to \Homeo_0(S^1)$ be a representation, and $c \subset \Gamma_g$ an element representing a separating simple closed curve.  Then $\Gamma_g$ splits as an amalgamated free product, $A \ast_c B$, where $A$  and $B$ are the fundamental groups of the two connected components of $\Sigma_g \setminus c$.  

Let $f_t$ be a continuous family of homeomorphisms in the centralizer of $\rho(c)$, with $f_0 = \id$.  A \emph{bending deformation of $\rho$ along $f_t$} is the family of representations $\rho_t$ defined by 
$$ \rho_t(\gamma) = 
\left\{ \begin{array}{ll} 
\rho(\gamma) &\text{ for } \gamma \in A \\
f_t \rho(\gamma) f_t^{-1} &\text{ for } \gamma \in B \\
\end{array} \right.
$$
Since $f_t \rho(c) f_t^{-1} = \rho(c)$, this gives a family of well-defined homomorphisms, with $\rho_0 = \rho$.  
\end{example}

The analog of bending for a non-separating simple closed curve is a \emph{twist deformation}.  

\begin{example}[Twisting in non-separating curves]  \label{twisting}
Let $S \subset \Sigma_g$ be a genus-1 subsurface with one boundary component $c$, and let $a$ and $b$ be standard (free) generators for $A = \pi_1(S)$.  Then $[a, b] = c$, and $\Gamma_g = A\ast_c B$.      

Let $\rho: \Gamma_g \to \Homeo(S^1)$, and let $f_t$ be a one-parameter family of homeomorphisms in the centralizer of $\rho(a)$, with $f_0 = \id$ and $f_1 = f$.   
A \emph{twist deformation of $\rho$ along $a$ in $f_t$} is the family of representations defined by 
$$  \begin{array}{ll} 
\rho_t(\gamma) =  \rho(\gamma) &\text{ for } \gamma \in B \\
\rho_t(a) = \rho(a) & \\
\rho_t(b) = \rho(b) f_t &
\end{array}
$$
Since $f_t$ commutes with $\rho(a)$, we have $[\rho(a), \rho(b)f_t] = [\rho(a), \rho(b)]$, so $\rho_t$ is a well-defined representation, and $\rho_0 = \rho$. 
\end{example}

\begin{remark}  \label{twist rk}
The terminology ``bend" comes from a standard deformation of surface groups in $\PSL(2,\C)$, and ``twist" comes from the special case where $\rho(a)$ belongs to a 1-parameter family of homeomorphisms $a_t$, with $a_0 = \id$ and $a_1 = \rho(a)$.  In this case, the twist of $\rho$ by $a_1=a$ described above corresponds to performing a Dehn twist in $a$.   
\end{remark}

It would be interesting to know just how far one can get by twisting and bending.  

\begin{question}
Let $X$ be a path-component of $\Hom(\Gamma_g, \Homeo(S^1))$.  Can every two representations in $X$ be connected by a path consisting of a sequence of twist and bend deformations?  
\end{question}

Related to this is the following.

\begin{question} \label{0 bend q}
Are there two representations of $\Gamma_g$ with Euler number 0 that cannot be connected by a path of twist and bend deformations?  
\end{question}

It is an open question whether the set of representations with Euler number 0 is connected.  An answer to Question \ref{0 bend q} would provide good evidence in either direction.

\boldhead{Other groups.} 
In Section \ref{not just subsec}, we described geometrically motivated examples of actions of the fundamental groups of 3-manifolds on the circle.   Theorem \ref{fibered face thm} states that the construction for fibered hyperbolic 3-manifolds gives fully rigid actions.  Are there other fully rigid actions of such groups on $S^1$?    Bending and twist deformations don't make sense in this context, but perhaps there are other good candidates for families of deformations.

Of course, it would be very interesting to have a better picture of the ``character variety" of semi-conjugacy classes of representations of $\Gamma$ to $\Homeo(S^1)$, for any group $\Gamma$.   One approach to this might be to \emph{construct} groups $\Gamma$ whose algebraic structure makes $\Hom(\Gamma, \Homeo(S^1))$ easy to understand.  Calegari \cite{Calegari trans} has described groups with the property that some element $\gamma$ has $\rot(\rho(\gamma))$ essentially constant over all $\rho \in \Hom(\Gamma, \Homeo(S^1))$; these might be a good place to start.    Alternatively, one could follow the approach we have promoted here, taking an already interesting subgroup of $\Homeo(S^1)$ (surface group, 3-manifold group, Thompson's group,...) and study its deformations.   Which other groups might provide such examples?   

\bigskip

%---------------------------------------------------------------------------------

\bigskip
\bigskip
Dept. of Mathematics, 970 Evans Hall

University of California, Berkeley  

Berkeley, CA 94720 

\textit{E-mail}: kpmann@math.berkeley.edu

\end{document}